\documentclass[11pt]{article}
\pdfoutput=1
\usepackage[top=2cm,bottom=2cm,left=0.5cm,right=4cm]{geometry}
\usepackage{hyperref}
\usepackage{latexsym}
\usepackage{amssymb}
\usepackage{amsmath}
\usepackage{graphicx}
\usepackage{bbm,url}
\usepackage{xcolor}
\usepackage{marginnote}
\usepackage[OT2,OT1]{fontenc}

\begin{document}

\def\Ascr{\mathcal{A}}
\def\Bscr{\mathcal{B}}
\def\Cscr{\mathcal{C}}
\def\Dscr{\mathcal{D}}
\def\Escr{\mathcal{E}}
\def\Fscr{\mathcal{F}}
\def\Hscr{\mathcal{H}}
\def\Iscr{\mathcal{I}}
\def\Jscr{\mathcal{J}}
\def\Mscr{\mathcal{M}}
\def\Nscr{\mathcal{N}}
\def\Pscr{\mathcal{P}}
\def\Qscr{\mathcal{Q}}
\def\Rscr{\mathcal{R}}
\def\Sscr{\mathcal{S}}
\def\Wscr{\mathcal{W}}
\def\Xscr{\mathcal{X}}
\def\cupp{\stackrel{.}{\cup}}
\def\bold{\bf\boldmath}

\newcommand{\rouge}[1]{\textcolor{red}{ #1}}
\newcommand{\bleu}[1]{\textcolor{blue}{ #1}}
\newcommand{\boldheader}[1]{\smallskip\noindent{\bold #1:}\quad}
\newcommand{\PP}{\mbox{\slshape P}}
\newcommand{\NP}{\mbox{\slshape NP}}
\newcommand{\opt}{\mbox{\scriptsize\rm OPT}}
\newcommand{\ec}{\mbox{\scriptsize\rm OPT}_{\small\rm 2EC}}
\newcommand{\lp}{\mbox{\scriptsize\rm LP}}
\newcommand{\inn}{\mbox{\rm in}}
\newcommand{\out}{\mbox{\rm out}}
\newcommand{\dom}{\mbox{\rm dom}}
\newcommand{\ran}{\mbox{\rm ran}}
\newcommand{\MAXSNP}{\mbox{\slshape MAXSNP}}
\newtheorem{theorem}{Theorem} [section]
\newtheorem{lemma}{Lemma}[section]
\newtheorem{corollary}{Corollary}[section]
\newtheorem{proposition}{Proposition}[section]
\newtheorem{fact}{Fact}[section]
\newtheorem{definition}{Definition}[section]
\def\prove{\par \noindent \hbox{{\bf Proof}:}\quad}
\def\endproof{\eol \rightline{$\Box$} \par}
\renewcommand{\endproof}{\hspace*{\fill} {\boldmath $\Box$} \par \vskip0.5em}
\newcommand{\mathendproof}{\vskip-1.8em\hspace*{\fill} {\boldmath $\Box$} \par \vskip1.8em}

\definecolor{orange}{rgb}{1,0.9,0}
\definecolor{violet}{rgb}{0.8,0,1}
\definecolor{darkgreen}{rgb}{0,0.5,0}
\definecolor{grey}{rgb}{0.75,0.75,0.75}

\title {
	\vspace*{-2cm}
		{\huge Matchings: Source, Goal and} \\[3mm]
	{\huge Faithful Companion} \footnote{This article was prepared for the Centenary Issue  of the Journal of the London Mathematical Society, it is accepted for publication in this journal.} \\[3mm]
}
\author{Andr\'as Seb\H{o}\footnote{CNRS, Univ.~Grenoble-Alpes, Laboratoire G-SCOP.}
}
\begingroup
\makeatletter
\let\@fnsymbol\@arabic
\maketitle
\endgroup
\begin{abstract} 
Matchings were among the earliest motivations for graph theory. They subsequently remained a central goal, inspiring the development of new mathematical tools that went well beyond problems directly concerning matchings. 

These tools proved widely applicable, accompanying the growth of graph theory over the past century. A milestone in this trajectory is W.~T.~Tutte, {\em The factorisation of linear graphs}, J. Lond. Math. Soc. (1), {\bf 22}, no. 2, (1947), 107--111., which firmly embedded graph theory—through matchings—into the  body of classical mathematics, in particular,  linear algebra and polynomials. 

We revisit this article of Tutte, presenting its original content, sketching some aspects of its impact until some recent progress, and  trace one of its subsequent lines of development finally leading to a new contribution answering an open challenge and extending known results.

	\smallskip
	\noindent{\footnotesize {\bf keywords}: matchings, factors, path matchings,  Tutte's theorems, the method of variables, randomized polynomial algorithms,  polynomials,  algebraic complexity,  $T$-joins, conservative weightings, bidirected graphs,   jump systems and their intersections}
\end{abstract}

\noindent
{\bf \large Introduction}

\smallskip
After a few isolated eighteenth-century appearances — Euler’s bridges of Königsberg, Jacobi’s work on matrices, contributions of Kempe, Tait,  and Heawood on map colouring and Petersen on matchings in cubic graphs — the ideas that shaped modern graph theory began to crystallize in the twentieth century through the work of Egerváry, Kőnig, Hall, Kuhn, Tutte, Edmonds, Lovász, and others.

After some first,  foundational results, Tutte’s article \cite{Tutte}  stands as a major milestone in the development of graph theory. His theorem on matchings became a  celebrated result in discrete mathematics, and opened  pathways towards  classical and new areas:  linear algebra, probabilistic algorithms, and emerging notions of complexity theory, including Edmonds' definition of good characterisation  and polynomial-time solvability, heralding the dawn  of complexity theory and further, leading to  algebraic and randomized complexity. 

Despite its influence, the concrete arguments of Tutte's paper remain little known, and are often misunderstood.   Section~\ref{sec:Tutte} offers an account: the original proof of the matching theorem, and a few of its ramifications, keeping original terminology whenever possible.  A closer reading of the original text brings some surprises: one of these is that contrary to common belief, Tutte’s proof (Section~\ref{subsec:proof}) is fundamentally combinatorial, 
even though it introduces an algebraic device with lasting impact. 
This algebraic tool is not needed to prove the theorem itself, yet played a crucial role in developing entire fields, like randomized polynomial algorithms, new combinatorial notions or algebraic complexity
 (Section~\ref{subsec:impact}).
Section~\ref{subsec:short} then leaves the historical path and offers a short proof of Tutte’s theorem, with K\H{o}nig's theorem en route, and Edmonds' algorithm as a bonus. 

\medskip
We use standard graph theory terminology and notation, following \cite{SCHRIJVERyellow,LPL} (occasionally \cite{Tutte}, introduced in Section~\ref{sec:Tutte}) with some conventions introduced below or whenever helpful.     

Section~\ref{sec:classical} studies the ``classical'' factor problems, intitiated by Tutte in \cite{factors} to generalize his matching theorem.  Their structure was later  explored by Lovász \cite{structure,structure2}, while  Edmonds was first to provide the corresponding algorithms \cite{E, EJ}. Here we no longer stick to  history, but  aim for an elegant unified view on the subject.  Section~\ref{subsec:parity}  studies minimisation under parity constraints, Section~\ref{subsec:bounds} also adds upper and lower degree bounds.

 Section~\ref{sec:jump} explores the limits of tractability for graph factors and the combinatorial properties that underpin it. The boundary  is the general factor problem  solved by Cornuéjols \cite{corn1}. Section~\ref{subsec:general} presents a short solution  from    \cite{thesis} that also computes the minimum deficiency and is  framed with an eye towards Lovász's request for an extension  to jump systems \cite{LPled}, achieved in Section~\ref{subsec:jump}. 

Graphs may contain parallel edges and loops — both may play essential roles. An edge $uv$ denotes an edge with endpoints $u$ and $v$.
Whenever  $V$ occurs it denotes a finite set, and $n:=|V|$; a  graph under consideration is always $G=(V,E)$ unless stated otherwise; $G$ is bipartite, if $V$ has a partition $\{A,B\}$ so that every edge has one of its endpoints in $A$ and the other in $B$, or equivalently if there is no odd circuit in $G$. 

The degree of $v$ is $d(v)=d_G(v)$, $d_H$ is the {\em degree vector} of a subgraph $H$, or of a subset of edges  $H$; for $X\subseteq V$, $\delta(X)$ is the set of edges with exactly one endpoint in $X$, $d(X):=|\delta(X)|$; for $X, Y\subseteq V$ disjoint sets $\delta(X,Y)$ is the set of edges with one endpoint in $X$, the other in $Y$, $d(X,Y):=|\delta(X,Y)|$.  We put the concerned graph or edge-set in the index in case of ambiguity. The subset of the edge-set $E$ induced by $X\subseteq V$ is denoted by $E(X)$.  

Given a graph, a {\em matching} is a set of edges such that (in this set) every vertex has degree at most~$1$.  Vertices of degree $1$  are said to be  {\em covered}, those of degree $0$ are  {\em missed}.
The maximum matching size is $\nu=\nu(G)$, and $\tau=\tau(G)$ denotes the minimum size of a vertex set meeting every edge. A matching is said to be {\em perfect} if it covers every vertex, that is, if it has $|V|/2$ edges.  
The rest of the paper follows a single guiding thread  progressively relaxing the degree constraints on the subgraphs we seek, and eventually lead to an abstract generalisation.

{\em Paths} and {\em circuits} are assumed to have no repeated vertices; {\em walks} may repeat vertices but not edges.
A path or walk with endpoints $u$ and $v$ is called a $(u,v)$-path or $(u,v)$-walk; $u=v$ is allowed.
{\em Components}  refer  to connected components of graphs. The coordinates (``components'') of vectors will be called {\em entries}.

For $X\subseteq E$, the {\em contraction} $G/X$ identifies  the endpoints of  edges in $X$, deleting these edges.
If edges of $G$ carry weights, the remaining edges of $G/X$ preserve them.

The set of integers, positive integers, rationals, reals is denoted by  $\mathbb{Z}$, $\mathbb{N}$,  $\mathbb{Q}$,  $\mathbb{R}$ respectively, and `+' in the index means restriction to nonnegative numbers;  $[n]:={1,\ldots,n}$.   
For sets $A,B$ the {\em symmetric difference}  is  $A\Delta B:=(A\setminus B)\cup (B\setminus A)$. A set is {\em odd}  if its cardinality is odd, and {\em even} otherwise.  A singleton $\{s\}$ is usually written simply as $s$. 

For a function $w:E\rightarrow \mathbb{R}$ and $X\subseteq E$, $w(X):=\sum_{x\in X}w(x)$.

The entry $(i,j)$ of a matrix means the entry in row $i$, column $j$; for a square matrix $M$, $\det(M)$ denotes the absolute value of its determinant.
For vectors $x,y\in \mathbb{R}^V$,
$\mu(x,y):=\sum_{v\in V} |x_v-y_v|$
is their $l_1$  (also called Manhattan, or taxi) distance. For $X, Y\subseteq\mathbb{R}^V$, 
$\mu(X,Y):=\displaystyle\min_{x\in X, y\in Y} \mu(x,y).$

This article does not attempt to survey the immense landscape of matching-related results; comprehensive accounts of their structural, algorithmic and polyhedral aspects exist in monographs and  handbook chapters \cite{LPL,LPled,bert,pulley} with less space constraints.  Our aim is more focused: to revisit Tutte’s paper, distill some of its core messages, and trace the strength of the ideas stemming from it, following one coherent line of generalisations while pointing to an inevitably incomplete but representative list of ramifications from its origins to the present day. I hope  to share some of the accumulated insights.

\section{Tutte's theorem on matchings}\label{sec:Tutte}

William Tutte played a leading role in breaking the Wehrmacht’s Lorenz cipher during World War II,  an experience that may have helped spark his later interest in combinatorial problems. Among his motivations, he  cites in  \cite{Tutte_book} (Figure~\ref{fig:Tutte-texts} left) a proof of Petersen’s theorem in   \cite{St}. This inspired him to develop his own proof and to generalize it to any graph.  
\begin{figure}[htbp]\label{fig:Tutte-texts}
	\centering
	\begin{minipage}[b]{0.48\textwidth}
		\centering
		\includegraphics[width=\textwidth]{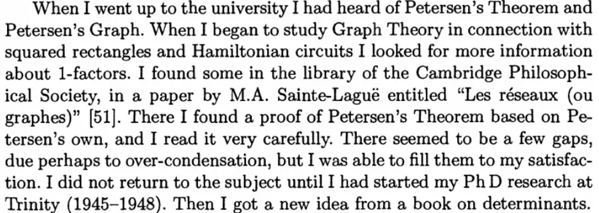}
	\end{minipage}
	\hfill
	\begin{minipage}[b]{0.48\textwidth}
		\centering
		\includegraphics[width=\textwidth]{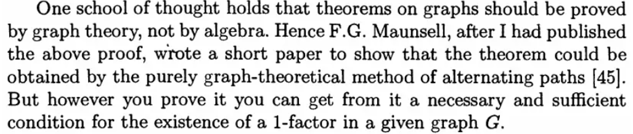}
	\end{minipage}
\caption{Extracts from  \cite[Chapter 3]{Tutte_book}, entitled``Graphs within Graphs''}	
	\label{fig:both_images}
\end{figure}

\vspace{-0.2cm}
We state Tutte's theorem essentially in its original form \cite{Tutte}:

\begin{theorem}\label{thm:Tutte} A graph $G=(V,E)$ does not have a perfect matching, if and only if there exists a set $S\subseteq V$ such that $G-S$ has more than $|S|$ odd components.
\end{theorem}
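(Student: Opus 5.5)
The ``if'' direction is immediate. Suppose $G-S$ has more than $|S|$ odd components and $M$ is a perfect matching. Every odd component $K$ of $G-S$ has an odd number of vertices, so $M$ cannot match $K$ entirely inside itself. Hence some vertex of $K$ is matched by $M$ to a vertex outside $K$. Since $K$ is a component of $G-S$, that partner lies in $S$. Distinct odd components use distinct vertices of $S$, so $|S|\ge$ the number of odd components, a contradiction. All the work is in the ``only if'' direction.

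For ``only if'' I will follow the classical maximal-counterexample argument (Lovász's version, close in spirit to Tutte's combinatorial proof). Call $S$ a \emph{barrier} if $G-S$ has more than $|S|$ odd components. Suppose $G$ has no perfect matching and no barrier.

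First, $|V|$ is even, since otherwise $S=\emptyset$ is a barrier. Adding an edge never creates a barrier: it can only merge components, and merging never increases the number of odd components minus $|S|$ in the wrong direction. More precisely, the number of odd components has the same parity as $|V|-|S|$, and adding an edge cannot increase it. Adding an edge also preserves the absence of a perfect matching, provided we keep it absent. So we add edges between nonadjacent pairs as long as no perfect matching appears. This yields an edge-maximal graph $G$ with no perfect matching and no barrier, in which adding any missing edge creates a perfect matching.

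Let $S$ be the set of vertices adjacent to all other vertices. I claim every component of $G-S$ is a complete graph. Given this, we build a perfect matching: match within each even component, and in each odd component leave one vertex to be matched into $S$. This is possible because there are at most $|S|$ odd components and $S$ is complete to everything. The leftover vertices of $S$ are even in number by parity of $|V|$, and they form a clique, so they can be matched among themselves. This contradicts the assumption that $G$ has no perfect matching.

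The main obstacle is the claim itself. Suppose a component of $G-S$ is not complete. Then there are $x,y,z$ in it with $xy,yz\in E$ and $xz\notin E$. Since $y\notin S$, there is also $w$ with $yw\notin E$. By maximality, $G+xz$ has a perfect matching $M_1\ni xz$, and $G+yw$ has a perfect matching $M_2\ni yw$. Consider the component $C$ of $M_1\Delta M_2$ containing $yw$; it is an alternating even circuit. If $C$ avoids $xz$, then $M_1\Delta C$ is a perfect matching of $G$. Otherwise, traverse $C$ from $y$ in the direction away from $w$ until first hitting $x$ or $z$; say the endpoint is $x$. Replace the traversed segment together with the edge $xy$, using the $M_1$-edges on the rest of $C$ and keeping $M_1$ off $C$. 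This gives a perfect matching of $G$, and the case-checking along $C$ is exactly where care is needed.
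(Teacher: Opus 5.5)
Your reduction is sound and follows the paper's route. Saturating to an edge-maximal graph with no perfect matching is Tutte's passage to a hyperprime graph. Your universal vertices are his singular vertices, and the final assembly of a perfect matching is the same. The gap is in the step you flagged yourself, the completeness of the components of $G-S$: both exchanges you write down are wrong. In the first case, $M_1\Delta C$ consists of $M_1$ off $C$ together with the $M_2$-edges of $C$, and these include $yw\notin E$. You need $M_2\Delta C$ instead. Its edges on $C$ are the $M_1$-edges, which lie in $E$ because $xz\notin C$; its edges off $C$ are $M_2$-edges, which lie in $E$ because $yw\in C$.

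The second case fails outright, and this is the real problem.
\begin{itemize}
\item Walking from $y$ away from $w$, you leave $y$ along its $M_1$-edge. You must arrive at $x$ along its $M_2$-edge, since its $M_1$-edge is $xz$ and $z$ has not yet been met. So the segment $P$ from $y$ to $x$ has even length, and $P+xy$ is an odd cycle.
\item No recombination of edges of $C$ with $xy$ can give a perfect matching. $P$ has an odd number of vertices other than $x$ and $y$. The only edges of $C$ leaving $V(P)$ are $xz$ and $yw$, neither in $G$. So once $x$ and $y$ are matched to each other, those odd-many vertices cannot all be covered.
\item The correct chord is $yz$, joining $y$ to the \emph{other} end of $xz$. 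Let $C'$ be the $(z,w)$-path of $C$ avoiding $y$ (it also avoids $x$), closed up by $wy$ and $yz$. Then $C'$ is an even $M_2$-alternating circuit, it does not contain $xz$, and its only edge outside $E$ is $yw$. Hence $M_2\Delta C'$ is a perfect matching of $G$.
\item Equivalently, walk from $w$ away from $y$, and close with the edge from $y$ to the first vertex of $\{x,z\}$ you reach.
\end{itemize}
This is exactly the paper's combinatorial proof of Lemma~\ref{lem:Jacobi}, with $M_1-xz$ and $M_2-yw$ as the perfect matchings of $G-\{x,z\}$ and $G-\{y,w\}$. An alternating $(y,x)$-path between them yields a perfect matching of $G-\{y,z\}$, not of $G-\{x,y\}$. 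It is the edge $yz$ that then completes it to a perfect matching of $G$.
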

	
It is routine to derive min–max formulations from existence theorems, and Tutte’s theorem is no exception. However, the following minmax version appeared only later in  \cite{Berge} and is sometimes called the Tutte-Berge theorem. The reader can readily obtain it from Tutte’s theorem.

\begin{theorem}\label{thm:Berge} In a graph $G=(V,E)$ the minimum number of vertices missed by a matching  equals the maximum difference over all $X\subseteq V$ of the number of odd  components of $G-X$ and $|X|$. 
\end{theorem}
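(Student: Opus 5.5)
We derive Theorem~\ref{thm:Berge} from Theorem~\ref{thm:Tutte} by a padding argument. Write $o(H)$ for the number of odd components of a graph $H$, $\mathrm{def}(G)$ for the minimum number of vertices missed by a matching, and $d:=\max_{X\subseteq V}\bigl(o(G-X)-|X|\bigr)$.

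The inequality $\mathrm{def}(G)\ge d$ is the easy direction and needs no earlier result. Fix a matching $M$ and a set $X\subseteq V$. For every odd component $C$ of $G-X$, either some vertex of $C$ is missed by $M$, or, since $|C|$ is odd, some vertex of $C$ is matched by $M$ to a vertex outside $C$. Such a partner must lie in $X$, because there are no edges between distinct components of $G-X$. Distinct odd components use distinct vertices of $X$ in this way, so at most $|X|$ of them are of the second kind. Hence $M$ misses at least $o(G-X)-|X|$ vertices.

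For the converse $\mathrm{def}(G)\le d$, note first that $d\ge 0$ (take $X=\emptyset$). Also $d\equiv |V| \pmod 2$: for any $X$, the number $o(G-X)$ has the parity of $|V\setminus X|$, so $o(G-X)-|X|\equiv |V|$. Build $G'$ from $G$ by adding a set $K$ of $d$ new vertices, each joined to all of $V$ and to each other. Then $|V(G')|=|V|+d$ is even. We claim $G'$ has a perfect matching, and check this with Theorem~\ref{thm:Tutte}: we show $o(G'-S)\le |S|$ for every $S\subseteq V\cup K$.

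\begin{itemize}
\item If $K\not\subseteq S$, then $G'-S$ is connected, because any remaining vertex of $K$ is adjacent to every other remaining vertex. So $o(G'-S)\le 1$. If $S=\emptyset$, the only component is $G'$ itself, which is even, so $o(G'-S)=0$. Otherwise $|S|\ge 1\ge o(G'-S)$.
\item If $K\subseteq S$, let $X:=S\cap V$. Then $G'-S=G-X$, so $o(G'-S)=o(G-X)\le |X|+d=|S|$ by the definition of $d$.
\end{itemize}

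So $G'$ has a perfect matching $M'$. Deleting the edges of $M'$ that meet $K$ leaves a matching of $G$. At most $d$ vertices of $V$ were matched into $K$, and every other vertex of $V$ is matched by $M'$ to another vertex of $V$, so the new matching misses at most $d$ vertices of $G$. This gives $\mathrm{def}(G)\le d$, and together with the first paragraph, equality.

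The only delicate step is the Tutte-condition check for $G'$. The case $K\not\subseteq S$ is where the parity fact is needed, to rule out an odd $G'$ when $S=\emptyset$. Otherwise the argument is routine.
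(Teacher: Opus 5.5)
Your proof is correct. The easy direction is the standard counting argument. For the converse, the parity observation $d\equiv |V|\pmod 2$ together with a clique $K$ of $d$ universal vertices makes $G'-S$ connected whenever $K\not\subseteq S$. Tutte's condition for $G'$ then reduces exactly to the definition of $d$.

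This is the derivation the paper leaves to the reader right after the statement (``The reader can readily obtain it from Tutte's theorem''). The proofs it actually writes out go the other way round.

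\begin{itemize}
\item The short proof in Section~\ref{subsec:short} establishes the min--max formula directly by induction, without using Theorem~\ref{thm:Tutte}. Take an edge $uv$. If $\nu(G-u)<\nu(G)$, a barrier of $G-u$ together with $u$ is a barrier of $G$. Otherwise Lemma~\ref{lem:path} gives an alternating $(u,v)$-path that closes an odd circuit with $uv$. Every matching of the contracted graph expands to a matching of $G$ missing no more vertices, so barriers of the contracted graph remain barriers of $G$.
\item The second proof builds a conservative $\pm1$ weighting from a maximum matching and reads a barrier off the level sets of the resulting distance function, via Theorem~\ref{thm:shortest}.
\end{itemize}

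Your reduction buys brevity and shows that the min--max form says no more than the existence form. However, it inherits all of its depth from Tutte's theorem (in the paper, Tutte's saturation argument with Lemma~\ref{lem:Tutte} and Lemma~\ref{lem:Jacobi}), and it gives no handle on how to find a barrier or a maximum matching.

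The paper's proofs are self-contained and yield Theorem~\ref{thm:Tutte} as the special case of zero deficiency. They are also algorithmic in spirit. The first gives K\H{o}nig's theorem en route and justifies blossom shrinking in Edmonds' algorithm. The second extends to the Edmonds--Gallai structure theorem and to weighted $T$-joins.
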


 A set $X$ attaining this maximum is called a {\em barrier}.

\subsection{The original proof}\label{subsec:proof}

The purpose of this section is to both present Tutte's original proof and highlight its dual algebraic-combinatorial nature both of which had significant impact. Indeed, as mentioned in the introduction and detailed below,  Tutte used matrix identities to derive  combinatorial statements but it would be misleading to regard the arguments as fundeamentally algebraic,  as the  combinatorial content can in fact be obtained directly.
Our exposition follows  \cite{Tutte} using comments from  \cite[Chapter 3: Graphs Within Graphs]{Tutte_book}.  

Tutte calls a graph {\em prime}, if it has no perfect matching. He calls a vertex $u\in V$    {\em  singular}, if $G-\{u,v\}$ is prime for every $v\in V$.

To each graph  $G=(V,E)$ he associates  the {\em Tutte matrix}  $M_G$,  a $V\times V$ skew-symmetric matrix in indeterminates.    For each  $e=uv\in E$,  one of the entries  $(u,v)$, $(v,u)$ is set to $x_e$ and the other to $-x_e$\footnote{The choice of signs corresponds to orienting the edges. Although the determinant depends on this orientation, since the matrix is skew‑symmetric and each variable appears in two symmetric positions,  det$(M_G)$ is identically $0$ for all orientations or for none.}; for non-edges the entry is $0$.  The determinant det$(M_G)$ is thus a polynomial (of degree $n$) in the variables $x_e$ $(e\in E)$, whose terms correspond to perfect matchings, but cancellation may occur.  The {\em Pfaffian} $P_G$ of a graph $G$  is another polynomial in the same variables having the advantage that each term occurs only once,  and therefore there is no cancellation.  They are a useful tool for enlightening the relation of  det$(M_G)$ and perfect matchings:  skew symmetric matrices satisfy {\em Muir's identity} (1882)   det$(M_G)=P_G^2$ readily impliesg Lemma~\ref{lem:Pf}\footnote{This is sufficient for our purposes; for the definition of the Pfaffian see e,g.~in  \cite{LPL}.}.
All we need about the Pfaffian is Lemma~\ref{lem:Pf} and the identity (\ref{eq:Jacobi}),  taken as as black boxes\footnote{Both are deduced by Tutte from  “Jacobi’s theorem''   presented   with a pointer to  Aitken, {\em Determinants and matrices}, (Edinburgh, 1939), p. 97. Apparently it was a well-known theorem in a well-known book at the time.} : 

\begin{lemma}\label{lem:Pf} A graph  $G$ is prime,  if and only if polynomial det$(M_G)=P_G^2$ is identically zero. \endproof
\end{lemma}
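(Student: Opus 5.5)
The plan is to prove directly that $\det(M_G)$ is not identically zero if and only if $G$ has a perfect matching. The identity $\det(M_G)=P_G^2$ is taken as given (Muir's identity). Then $\det(M_G)\equiv 0$ exactly when $P_G\equiv 0$, since the polynomial ring over $\mathbb{Q}$ in the variables $x_e$ is an integral domain. The work is to expand the determinant, or the Pfaffian, over permutations and understand which terms survive.

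First, write $\det(M_G)=\sum_{\sigma}\mathrm{sgn}(\sigma)\prod_{v\in V} (M_G)_{v,\sigma(v)}$, summing over permutations $\sigma$ of $V$. A term is nonzero only if every $v$ is adjacent to $\sigma(v)$ through some edge, so the cycles of $\sigma$ trace closed walks in $G$. Loops have zero diagonal entry in a skew-symmetric matrix, so fixed points contribute $0$. Next, pair up the permutations containing an odd cycle of length at least $3$. For each such $\sigma$, choose canonically the odd cycle containing the smallest vertex in some fixed order, and reverse it to get $\sigma'$. The two permutations have the same sign. Reversing the cycle turns each factor $(M_G)_{u,\sigma(u)}$ into $(M_G)_{\sigma(u),u}=-(M_G)_{u,\sigma(u)}$, and this happens an odd number of times, so the two terms cancel. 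This is an involution, so all permutations with an odd cycle cancel. The remaining permutations have only even cycles, and each even cycle alternates into two perfect matchings of its vertex set. Parallel edges need a little care: I would handle them by letting each entry be the signed sum of its edge variables and expanding the product.

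For the direction ``perfect matching $\Rightarrow$ not identically zero'', take a perfect matching $N$ and the involution $\sigma_N$ swapping the endpoints of each edge of $N$. Its term is $\pm\prod_{e\in N}x_e^2$. I will show that no other surviving permutation produces the same monomial. A monomial $\prod_{e\in N}x_e^2$ arising from a permutation with only even cycles forces each of its cycles to use edges of $N$ only, and each edge twice. This forces every cycle to be a transposition along $N$, so the permutation is $\sigma_N$. The coefficient is therefore nonzero, and $\det(M_G)\not\equiv 0$.

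For the converse, if $G$ has no perfect matching, then every permutation with all cycles even would split into perfect matchings of $G$, so no such permutation exists. All remaining terms involve odd cycles and cancel as above, so $\det(M_G)\equiv 0$. Equivalently, $P_G$ is a signed sum over perfect matchings with distinct monomials, so it vanishes exactly when there are none.

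The main obstacle is making the cancelling involution well defined and sign-correct in the presence of parallel edges. The orientation signs must flip consistently along the reversed cycle. The cleanest fix is to expand each matrix entry into its edge terms first and run the involution on pairs (permutation, edge choice), reversing the cycle while keeping the chosen edges.
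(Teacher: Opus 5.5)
Your proof is correct, but it follows a different route from the paper's. The paper gives no separate argument. It takes Muir's identity $\det(M_G)=P_G^2$ as a black box and uses that the Pfaffian has exactly one term $\pm\prod_{e\in N}x_e$ per perfect matching $N$. These monomials are distinct, so there is no cancellation and $P_G\equiv 0$ iff $G$ is prime; the integral-domain remark then transfers this to $\det(M_G)$. That is precisely your one-line ``Equivalently'' remark at the end.

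Your main argument instead works directly with the Leibniz expansion of $\det(M_G)$:
\begin{itemize}
\item the odd-cycle-reversal involution cancels every permutation containing an odd cycle;
\item a permutation whose cycles are all even and whose term is nonzero splits into a perfect matching of $G$;
\item the monomial $\prod_{e\in N}x_e^2$ can only come from $\sigma_N$, where its coefficient is $+1$.
\end{itemize}
What this buys is a self-contained proof of the determinant statement that needs neither the definition of the Pfaffian nor Muir's identity; you use Muir only to rephrase the conclusion in terms of $P_G$. It also makes explicit where the cancellation the paper alludes to actually occurs. The paper's route is shorter once the black boxes are granted, and it fits the later use of the Pfaffian identity~\eqref{eq:Jacobi}.

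One correction to your own assessment: parallel edges are not a real obstacle for the cancellation step. The relation $(M_G)_{vu}=-(M_G)_{uv}$ holds as an identity of polynomials however many parallel edges contribute to an entry. Reversing the chosen odd cycle therefore cancels whole permutation terms without expanding any entry, so no involution on (permutation, edge choice) pairs is needed. Expanding entries into edge variables matters only for isolating the coefficient of $\prod_{e\in N}x_e^2$. There your uniqueness argument already suffices: each chosen edge $e_v$ must lie in $N$, which forces $\sigma(v)$ to be the $N$-partner of $v$.
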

After this observation, Tutte searched among known identities between subdeterminants and found one that proved decisive.  Let $i,j,k,l\in V$ be distinct vertices. Then   
\begin{equation}\label{eq:Jacobi} 
	P_G P_{G-\{i,j,k,l\}} = \pm P_{G-\{i,j\}}P_{G-\{k,l\}}\,\pm\, P_{G-\{i,k\}}P_{G-\{j,l\}}\,\pm\, P_{G-\{i,l\}}P_{G-\{j,k\}}, 
\end{equation}
{\em In particular, if $G$ is prime, it is impossible that exactly one term of the right hand side is nonzero:}
\begin{lemma}\label{lem:Jacobi} If $G$ is prime but both $G-\{i,k\}$,  $G-\{j,l\}$   admit perfect matchings, then 
	
	-- 	 both  $G-\{i,j\}$,  $G-\{k,l\}$ admit perfect matchings, or if not, 
	
	--  both  $G-\{i,l\}$,  $G-\{j,k\}$ admit perfect matchings.\endproof 
\end{lemma}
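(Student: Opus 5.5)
The plan is to read Lemma~\ref{lem:Jacobi} off the identity (\ref{eq:Jacobi}), using Lemma~\ref{lem:Pf} to turn statements about perfect matchings into statements about Pfaffians.

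First I translate the hypotheses. Since $G$ is prime, Lemma~\ref{lem:Pf} gives $P_G^2=\det(M_G)\equiv 0$. The Pfaffian is a polynomial over an integral domain, so $P_G\equiv 0$. The left-hand side of (\ref{eq:Jacobi}) is therefore identically zero. The same applies to every subgraph $G-\{a,b\}$ with $a,b\in\{i,j,k,l\}$: its Tutte matrix is the principal submatrix of $M_G$ obtained by deleting rows and columns $a,b$. By Lemma~\ref{lem:Pf}, $G-\{a,b\}$ admits a perfect matching if and only if $P_{G-\{a,b\}}\not\equiv 0$. Applying this to the hypotheses on $G-\{i,k\}$ and $G-\{j,l\}$, and using again that the polynomial ring $\mathbb{Z}[x_e: e\in E]$ has no zero divisors, the middle product $P_{G-\{i,k\}}P_{G-\{j,l\}}$ is a nonzero polynomial.

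Next I argue by contradiction. Suppose both alternatives of the lemma fail, that is:
\begin{itemize}
\item one of $G-\{i,j\}$, $G-\{k,l\}$ has no perfect matching, and
\item one of $G-\{i,l\}$, $G-\{j,k\}$ has no perfect matching.
\end{itemize}
By Lemma~\ref{lem:Pf}, the first and third products on the right-hand side of (\ref{eq:Jacobi}) then each contain an identically zero factor, so both products vanish. The identity reduces to
\[
0=\pm P_{G-\{i,k\}}P_{G-\{j,l\}},
\]
which contradicts the nonvanishing established above. This is exactly the emphasized sentence preceding the lemma: for a prime $G$, exactly one nonzero term on the right-hand side is impossible. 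Hence at least one further product is nonzero, and both of its factors are nonzero. By Lemma~\ref{lem:Pf}, both corresponding graphs admit perfect matchings, which gives the required disjunction.

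I expect no real obstacle, since (\ref{eq:Jacobi}) and Lemma~\ref{lem:Pf} are taken as black boxes. The one point to check is that Lemma~\ref{lem:Pf} may legitimately be applied to the subgraphs $G-\{a,b\}$, i.e.\ that their Pfaffians are the ones appearing in (\ref{eq:Jacobi}). This holds because deleting vertices corresponds to deleting the same rows and columns of the skew-symmetric matrix $M_G$, which again yields a Tutte matrix. The signs $\pm$ play no role, since we only use whether each product is identically zero.
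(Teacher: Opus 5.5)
Your proposal is correct and follows the paper's own algebraic proof of this lemma: by Lemma~\ref{lem:Pf} the left-hand side of (\ref{eq:Jacobi}) vanishes and the middle term does not, so another product, and hence both of its factors, must be nonzero. You merely make the integral-domain steps explicit; the paper also gives a purely combinatorial proof via the alternating paths of $M_{ik}\Delta M_{jl}$, which avoids the Pfaffian identity altogether.
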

\prove By Lemma~\ref{lem:Pf} the left hand side of \eqref{eq:Jacobi} vanishes; the middle term   does not vanish by the condition, hence at least one of the other two terms must be nonzero.  \endproof
\begin{lemma}\label{lem:Tutte}  If $G$, $G-\{i,j\}$,  $G-\{j,k\}$ are prime, 
 either $G-\{i,k\}$  is prime, or $j$ is singular. 
\end{lemma}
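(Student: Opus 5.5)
We argue by contradiction, using Lemma~\ref{lem:Jacobi} once. Suppose $G$, $G-\{i,j\}$, $G-\{j,k\}$ are prime, $G-\{i,k\}$ is not prime, and $j$ is not singular. Since $j$ is not singular, there is a vertex $l$ with $G-\{j,l\}$ admitting a perfect matching. Then $l\ne j$, because $G-\{j,j\}=G-j$ is not considered; more to the point, $G-\{j,l\}$ has a perfect matching only for a vertex $l$ distinct from $j$. Moreover $l\neq i$ and $l\neq k$, since $G-\{i,j\}$ and $G-\{j,k\}$ are prime by hypothesis. Hence $i,j,k,l$ are four distinct vertices and the hypotheses of Lemma~\ref{lem:Jacobi} can be checked for this quadruple. (We may also assume $i\neq k$, since otherwise ``$G-\{i,k\}$'' is not meaningful in the intended sense.)

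Now $G$ is prime, while both $G-\{i,k\}$ and $G-\{j,l\}$ admit perfect matchings. Lemma~\ref{lem:Jacobi} then gives one of two alternatives:
\begin{itemize}
\item[(a)] both $G-\{i,j\}$ and $G-\{k,l\}$ admit perfect matchings, or
\item[(b)] both $G-\{i,l\}$ and $G-\{j,k\}$ admit perfect matchings.
\end{itemize}
Alternative (a) contradicts the primality of $G-\{i,j\}$, and alternative (b) contradicts the primality of $G-\{j,k\}$. This contradiction shows that either $G-\{i,k\}$ is prime or $j$ is singular, proving Lemma~\ref{lem:Tutte}.

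The only point needing care is the distinctness of $i,j,k,l$ required by \eqref{eq:Jacobi}. The vertex $l$ is produced by the failure of singularity, and it is exactly the primality hypotheses on $G-\{i,j\}$ and $G-\{j,k\}$ that rule out $l\in\{i,k\}$. Beyond this bookkeeping, the argument is a direct application of Lemma~\ref{lem:Jacobi}.
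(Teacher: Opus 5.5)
Your proof is correct and is essentially the paper's argument in contrapositive form. Both apply Lemma~\ref{lem:Jacobi} to a fourth vertex $l\notin\{i,j,k\}$ and observe that the primality of $G-\{i,j\}$ and $G-\{j,k\}$ rules out both alternatives, so $G-\{i,k\}$ or $G-\{j,l\}$ must be prime; your muddled justification of $l\neq j$ is better replaced by parity: since $G-\{i,k\}$ has a perfect matching, $|V|$ is even, so $G-j$ has none.
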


\prove By the condition,  the conclusion of Lemma~\ref{lem:Jacobi} does not hold for any $l\ne i,j,k$, so at least one of $G-\{i,k\}$,  $G-\{j,l\}$ is prime. If $G-\{i,k\}$ is prime we are done; otherwise, $G-\{j,l\}$ is prime for all  $l\ne  i,k$. Together with the assumptions 
this means that $j$ is singular. \endproof

\noindent{\bf Proof of Theorem~\ref{thm:Tutte}}: Suppose $G$ is prime, and while there exist $u\ne v\in V$ s.t. $G-\{u,v\}$ is prime, join them. The resulting graph $\hat G= (V, \hat E)$ is still prime, it is actually said to be {\em hyperprime}  in  \cite{Tutte},  i.e.   
\begin{equation}\label{eq:sat}
	\hbox{ $uv\in \hat E$ if and only if  $\hat G-\{u,v\}$ is prime.}
	\end{equation}
It is sufficient to prove  Tutte Theorem for  $\hat G$, since any set whose deletion creates sufficiently many odd components in $\hat G$ also does so in $G$.   
We  may therefore assume that $G$ itself is hyperprime. 

Let $S$ be the set of singular vertices. 
From (\ref {eq:sat}) it follows that every $s\in S$ is adjacent to every $v\in V\setminus \{s\}$, moreover, for $i\ne k$ in the same component of $G-S$, $ik\in E$ since otherwise there exists $j$ nonsingular different from them  with $j\notin S$, and then $G-\{ik\}$ is prime by Lemma~\ref{lem:Tutte} and $ik\in E$ by \eqref{eq:sat}.   So each component  of $G-S$ is a complete graph. 

Even components admit perfect matchings internally, and the same holds for all but one vertex in each of the odd components;  $|S|$ of the still missed vertices can be matched to $S$.   Hence,  if the number of odd components of $G-S$ is at most $|S|$, $G$ is not prime. This completes the proof.
\endproof

The above proof  is not entirely self-contained:
it relies on Lemma~\ref{lem:Pf} and identity (\ref{eq:Jacobi}), both algebraic facts about Pfaffians. In fact, only a consequence is  used, Lemma~\ref{lem:Jacobi}, and only in the proof of Lemma~\ref{lem:Tutte}. We show  here that Lemma~\ref{lem:Jacobi},  the key consequence of Tutte's favorite identity \eqref{eq:Jacobi} admits a simple combinatorial proof,  completing Tutte’s argument without algebra: 
\begin{figure}[h]
	\centering
	\includegraphics[scale=0.425]{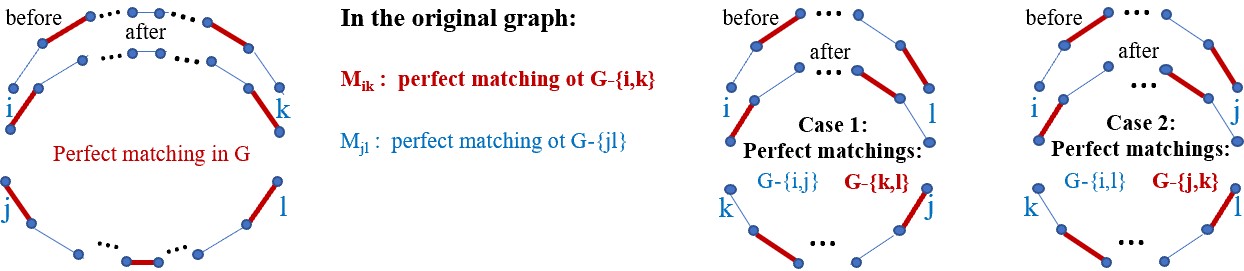}
	\vskip -0.2cm
	\caption{Alternating paths for the Combinatorial version     (of Lemma~\ref{lem:Jacobi}) of Tutte's proof.   }\label{fig:comb}	
\end{figure} 

\noindent {\bf Proof of Lemma \ref{lem:Jacobi}, completing Tutte’s proof of Theorem \ref{thm:Tutte} combinatorially}:

Let $G$ be prime and $M_{ik}$ (red, thick), $M_{jl}$ (blue, thin) perfect matchings of $G-\{i,k\}$,  $G-\{j,l\}$ respectively. In the symmetric difference $\Sigma:=M_{ik}\Delta M_{jl}$ all vertices have  degree $0$, $1$ or $2$; the vertices of degree  $1$ are exactly $i, j, k, l$.  So $\Sigma$ is the disjoint union of circuits, isolated vertices and two alternating paths with endpoints among $i, j, k, l$.

It is impossible, that one path has endpoints  $i$, $k$ and the other  $j, l$,  since both would then be odd paths (Figure~\ref{fig:comb} left):  exchanging along one of them yields a perfect matching of $G$, contradicting the assumption that $G$ is prime.  Therefore either $i$, $l$  and $j$, $k$, or $i$, $j$ and $k$, $l$ are paired by paths.
In either case the paths are even, and exchanging along  one of them we get the claimed perfect matchings.  (Figure~\ref{fig:comb} right shows the two cases.) 
\endproof

 The combinatorial proof is even shorter than the algebraic one. Whether algebraic or combinatorial, however, the argument is far from being intuitive.  Tutte ingeniously unearthed the identity (\ref{eq:Jacobi}) he needed-—but {\em from what planet does such an identity descend?} Lemma~\ref{lem:Jacobi} and its combinatorial proof do not dispel  this mystery!

Several subsequent articles presented combinatorial proofs under the belief that Tutte’s original argument was essentially algebraic. Tutte himself may have contributed to this misunderstanding through remarks in  \cite{Tutte_book} (Figure~\ref{fig:Tutte-texts}, right), delivered with subtle and polite irony. Belck and Maunsell \cite{Belck,M}  in fact observed that ``all reference to determinants can be replaced by elementary arguments". Our aim here was to point out  that Tutte’s proof is already combinatorial: his reference to determinants can be squeezed to  Lemma~\ref{lem:Jacobi} and his relatively lengthy proof of this lemma with determinants can be replaced by a few straightforward graph theoretic lines.

Nevertheless   Lemma~\ref{lem:Tutte} admits a more transparent  interpretation.   In a  1974-75 course in  Budapest, Lovász exploited this point \cite{LLshort}, \cite[Section 3.1]{LPL}  to give a   a natural and elegant combinatorial proof by keeping the above saturation procedure of Tutte, and
restating Lemma \ref{lem:Tutte} as the fact that adjacency becomes an equivalence relation in saturated (hyperprime) graphs after deleting universal vertices. Other elegant short proofs were given by Gallai \cite{Gallai1,Gallai2}.
We present a different short proof  in Section~\ref{subsec:short}.

\subsection{Determinants in Indeterminates}\label{subsec:impact}

Tutte's theorem itself received the attention it deserves.   The algebraic machinery often associated with it, such as determinants and the Pfaffian, has also been credited for its power. 
  As we saw in the previous subsection, these are unnecessary for the proof itself, but subsequent developments were catalysed by these algebraic tools. They opened new directions in theoretical computer science, complexity theory, and even led to fresh combinatorial objects, such as the path matchings introduced by  Cunningham and Geelen \cite{cungeel}.  

A remarkable  property of determinants explains their predictive power  for algorithmic and complexity results as suggested by Lovász \cite{LLRP}.  
{\em Even though their expansion contains  exponentially many terms, their value can be computed in polynomial time via  simple algorithms such  Gaussian elimination; moreover, if a nonzero multivariable polynomial  
	of degree $d$ is evaluated by choosing values for the variables, from a set of cardinality $s$,  the value of the substitution is equal to $0$ with probability $d/s$.} The latter result is the so called  {\em Schwartz-Zippel} Lemma\footnote{Ore (1922), DeMillo and Lipton, Schwartz, Zippel, (1978-79). The assertion  can be proved simply using elementary discrete probabilities, see for instance ``Motwani, Raghavan, Probabilistic Algorithms, Cambridge University Press, 1995". Repeated sampling drives the error probability  exponentially to $0$, placing the concerned problems in the  class RP, sandwiched as  P$\subseteq$RP$\subseteq$NP, commonly considered as close to $P$.}.  

This subsection outlines applications of this idea, first appearing in Tutte's proof. 

\smallskip
\noindent{\bf Bipartite matching}: Let $G=(A,B, E)$ be a bipartite graph, and assume $|A|=|B|$  for simplicity. Define the $A\times B$  matrix $M_{A,B,E}$ whose $(a,b)$-entry  is the variable $x_{ab}$ if $ab\in E$, and $0$ otherwise.  Each variable appears at most once, so no cancellation occurs in the determinant expansion. Hence det$(M_{A,B,E})$ is identically $0$, if and only if $G$ has no perfect matching. 

By evaluating the determinant at random substitutions from a polynomially sized set, one can decide in polynomial time, with exponentially small error probability whether $G$ has a perfect matching. A perfect matching itself can then be found by successive edge deletions. The maximum matching problem in bipartite graphs reduces to this procedure.

 In the bipartite case $M_G = 
	\begin{pmatrix}
			0 & M_{A,B,E} \\
			-M_{A,B,E} & 0  \\
 	\end{pmatrix}$, so 
   $P_G=$det$M_{A,B,E}$, and  Muir's identity  reduces to the trivial equality
   det$(M_G)=$(det$(M_{A,B,E}))^2$.
   
  Although Tutte’s matrices  subsume this bipartite special case, the bipartite construction is often referred to as the {\em Edmonds matrix}. 

\smallskip
\noindent{\bf Matching, Matroid Matching}:  
For general  graphs, the randomized approach must replace  the Edmonds matrix by the Tutte matrix $M_G$
and Lemma~\ref{lem:Pf} (via Muir’s identity) becomes essential. The randomized algorithms are not faster though than the exact ones. However,  Lovász \cite{LLRP} extended this method to matroid matching, obtaining a randomized algorithm  that is  substantially simpler and  faster than the exact algorithms. Consequently, both matching and matroid matching lie in RP even before invoking more sophisticated polynomial-time algorithms. Geelen \cite{G} later derandomized this approach for graphs.

\smallskip
\noindent{\bf Exact Matching}: The {\em Exact Matching problem } asks, given a graph $G=(V,E)$, 
a subset $R\subseteq E$
and an integer $k\in\mathbb{N}$, whether there exists a perfect matching $M$, with $|M\cap R|=k$. 
Its deterministic complexity remains open, even for bipartite graphs. 

 Using the Tutte matrix, however, a randomized polynomial algorithm can be designed \cite{MVV}.  Mulmuley, Vazirani and Vazirani complete  Lovász's approach with an additional idea.
 
 A naive attempt would substitute in $M_G$  the same variable $y$  for each edge in $R$, the value $1$ for edges of $E\setminus R$, and $0$ otherwise.  By Muir's identity and  interpolation one can test whether the coefficient of $y^k$  in  $P_G$ is nonzero. However, since different edges now carry the same variable, cancellation may occur in the Pfaffian expansion, and  Lemma~\ref{lem:Pf} no longer applies to individual coefficients.
 
Mulmuley, Vazirani, and Vazirani avoid this difficulty by assigning random edge weights. thereby {\em isolating} a unique minimum weight perfect matching $M$, $|M\cap R|=k$,  with high probability. Multiplying the variable $y$ of the naive approach by these edge weights   prevents cancellation in the Pfaffian expansion, with high probability. 

In planar graphs, {\em Pfaffian orientations} (see the definition e.g. in  \cite{LPL}) yield an exact polynomial-time algorithm \cite{V}, bypassing randomisation altogether: when the Tutte matrix is defined from such an orientation (exploiting the freedom  in its definition, see Section~\ref{subsec:proof}) all terms in the Pfaffian or the determinant expansion have the same sign, so the naive substitution suffices and the isolation mentioned above is unnecessary. 

Generalisations, such as the minimum odd 
$T$-join problem -- and equivalently the minimum odd cycle problem with $\pm 1$ weights in graphs without negative cycles -- are also solvable in randomized polynomial time \cite{GK}, but their deterministic complexity remains open\footnote{Polynomial-time solvability for planar graphs was recently observed by Csáji, Bérczi-Kovács, Jüttner, and Seb\H{o}.}. These  may be easier than Exact Matching, but some variants have surprisingly turned out to be \NP-hard. (For the current state of the art and new positive and negative results, see  \cite{SchS}.) They also contain the max cut problem for graphs embedded in the projective plane. These are not even solved in the special cases satisfying the conditions for idealness of Seymour's conjectures on ideal binary clutters (see eg.~\cite[Chapter 78, and 80.4, 80.5, Problem~62]{SCHRIJVERyellow}). 

\smallskip
\noindent{\bf Path Matching}: Cunningham and Geelen \cite{cungeel} introduced path matchings, a generalisation of matchings and common independent sets of two matroids. They solved their weighted optimisation problem in polynomial time, determined the linear descriptions of associated polyhedra, and proved their total dual integrality\footnote{TDI is a central notion of combinatorial optimisation essentially meaning the presence of a  combinatorial minmax theorem  for all weights, cf. e.g.~\cite{SCHRIJVERyellow}.}. Their work provides a perfect illustration of the predictive power of Tutte matrices: the fact that Path Matching lies in RP already suggested polynomial-time solvability. Our explanations concern the graphic special case.

For subsets $I, J\subseteq V$ an $(I,J)$-{\em path matching} is a set of edges $M$ so that the components of the graph $(I\cup J, M)$ are paths with one endpoint in $I$, the other in $J$, and internal vertices in $I\cap J$.  The {\em value} is the number of its edges, counting the single-edge components induced by $I\cap J$   twice. 

In response to a question of Cunningham and Geelen, Lovász observed that path matchings correspond to suitable submatrices of the Tutte matrix. Consequently, computing their value lies in RP.  A simple linear algebraic lemma enables a recursive computation of the rank of $M_G(I,J)$ and leads to a minimax formula  shown by Cunningham and Geelen \cite{cungeel}, combinatorially,  to equal the maximum value of a path matching. The underlying combinatorial insight is  an ingredient of the short proof of Tutte's theorem in Section~\ref{subsec:short} (cf. Lemma~\ref{lem:path}). 

\begin{lemma}\label{lem:key} Let $A$ be a matrix over a commutative ring.   If deleting both row $i$ and column $j$  decreases the  rank of $A$, then  deleting row $i$ alone or column $j$ alone already decreases the rank. 
\end{lemma}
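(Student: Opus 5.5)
Throughout I read ``rank'' over a commutative ring $K$ as \emph{determinantal rank}: the largest $r$ such that some $r\times r$ minor of $A$ is nonzero. This is the only notion that makes sense without a field, and over a field it is the usual rank. I argue the contrapositive. Let $r$ be the rank of $A$, and suppose deleting row $i$ alone and deleting column $j$ alone both leave the rank equal to $r$. I want to show that deleting both also leaves rank $r$, i.e.\ that some nonzero $r\times r$ minor avoids row $i$ and column $j$.

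\textbf{Reduction.} The hypotheses give two nonzero $r\times r$ minors: $\Delta_1=\det A[R_1,C_1]$ with $i\notin R_1$, and $\Delta_2=\det A[R_2,C_2]$ with $j\notin C_2$. If $j\notin C_1$ or $i\notin R_2$ we are done. So assume $j\in C_1$ and $i\in R_2$. Since the rank is $r$, every $(r+1)\times(r+1)$ minor vanishes. The task is to combine $\Delta_1\neq 0$, $\Delta_2\neq 0$ and these vanishing conditions into a nonzero minor of $A-\{\mbox{row } i,\mbox{ column } j\}$.

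\textbf{Key step: a polynomial identity.} The tool I would use is the Grassmann--Plücker (Sylvester exchange) identities for minors. These are identities in $\mathbb{Z}[a_{uv}]$, so they hold over every commutative ring, as Tutte's identity \eqref{eq:Jacobi} does for Pfaffians.

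\begin{itemize}
\item First normalise to a common row set, using $(r+1)\times(r+1)$ Laplace expansions that vanish. Expand the zero minor $\det A[R_1\cup\{i\},\,C_1\cup\{c\}]$ along row $i$, for $c\in C_2\setminus C_1$.
\item This expresses $a_{ic}\Delta_1$ as a signed combination of terms $a_{ic'}\det A[R_1,\,C_1\cup\{c\}\setminus\{c'\}]$.
\item Each minor $\det A[R_1,\,C_1\cup\{c\}\setminus\{c'\}]$ avoids row $i$, since $i\notin R_1$. When $c'=j$ it also avoids column $j$. So if all minors avoiding both are zero, only the terms with $c'\neq j$ survive.
\item Iterating the exchange over the columns of $C_2$, and symmetrically over the rows of $R_2$ via the transpose expansion, should express the product $\Delta_1\Delta_2$ as a $K$-linear combination of products. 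I would arrange that in every such product one factor is an $r\times r$ minor avoiding both row $i$ and column $j$.
\end{itemize}

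Then $\Delta_1\Delta_2$ lies in the ideal generated by the minors of $A-\{\mbox{row } i,\mbox{ column } j\}$.

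\textbf{Main obstacle.} This is where I expect the difficulty. The identity only forces a nonzero minor avoiding row $i$ and column $j$ if $\Delta_1\Delta_2\neq 0$, and in a ring with zero divisors two nonzero minors can have product $0$.

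\begin{itemize}
\item My first attempt would avoid products altogether. I would derive a \emph{linear} relation: using the vanishing $(r+1)$-minors that contain both row $i$ and column $j$, express the single minor $\Delta_1$ or $\Delta_2$ itself as a combination of minors avoiding both.
\item The $2\times 2$ case is a sanity check. There, the rank-$1$ conditions plus $a_{22}=0$ and the two hypotheses force $a_{21}=a_{12}=0$, so the only nonzero entry is $a_{11}$ and the statement holds trivially.
\item If the linear relation cannot be obtained in general, I would state the result for integral domains (via the fraction field, where matroid exchange between the row sets $R_1\cup\{i\}$ and $R_2$ finishes the argument). I would flag that for arbitrary commutative rings the statement needs this determinantal-rank interpretation together with the linear exchange identity above.
\end{itemize}
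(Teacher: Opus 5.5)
\textbf{Gap: the linear relation you are aiming for cannot exist, because under your reading the statement is false.} Your plan rests on a \emph{linear} relation, valid over every commutative ring, that expresses $\Delta_1$ or $\Delta_2$ through $r\times r$ minors avoiding row $i$ and column $j$. With determinantal rank the statement fails in the presence of zero divisors. Take $A=\begin{pmatrix}0&2\\2&0\end{pmatrix}$ over $\mathbb{Z}/4\mathbb{Z}$ and $i=j=2$:
\begin{itemize}
\item The $2\times 2$ minor is $0\cdot 0-2\cdot 2=-4=0$ in $\mathbb{Z}/4\mathbb{Z}$, so $A$ has rank $1$.
\item Deleting row $2$ leaves $(0\;\,2)$, and deleting column $2$ leaves its transpose; both have rank $1$.
\item Deleting both leaves $(0)$, which has rank $0$.
\end{itemize}

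Your $2\times 2$ sanity check is exactly where this slipped through. When the surviving entry is $0$, the rank condition yields only $a_{12}a_{21}=0$. The two hypotheses then demand $a_{12}\neq 0\neq a_{21}$, not $a_{12}=a_{21}=0$. Over a domain these conditions contradict each other, so the claim holds vacuously; over $\mathbb{Z}/4\mathbb{Z}$ they are realised. The product relation you fall back on is the most that can hold in general (here it reads $a_{12}a_{21}\in(a_{11})$ once the $2\times2$ minor vanishes). It only helps when there are no zero divisors. Even that relation is only announced (``should express'', ``I would arrange''), not derived.

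\textbf{The fix is the field argument, which is what the paper does.} The lemma has to be read over a field, or over an integral domain via its fraction field. That is all the application to Tutte matrices in indeterminates needs, and it is what the paper's proof tacitly assumes. There no identities are needed:
\begin{itemize}
\item If neither single deletion lowers the rank, row $i$ is a linear combination of the other rows, and column $j$ is a linear combination of the other columns.
\item Restricting the row combination to the columns other than $j$ shows that row $i$ of $A^j$ is still a combination of the other rows of $A^j$.
\item So deleting row $i$ from $A^j$ does not lower $\mathrm{rank}(A^j)$, and $\mathrm{rank}(A^j)=\mathrm{rank}(A)$ by the column hypothesis.
\end{itemize}
Your parenthetical fallback (fraction field plus an exchange argument) is essentially this and would be correct once written out. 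But it is the proof, not a fallback. You should drop the Grassmann--Pl\"ucker route and the claim about arbitrary commutative rings.
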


\prove Assume to the contrary  that both matrices $A_i$, obtained by deleting row $i$  and $A^j$ obtained by deleting column $j$  have  the same rank as $A$. Then   row $i$ is a linear combination  of the other rows, and column $j$  is a linear combination of the other columns.

 Consequently, the $i$-th row of $A^j$ is still a linear combination of the other rows. Thus deleting row $i$ of $A^j$  does  not decrease the rank, a contradiction. \endproof

This lemma can be immediately applied to  $i\in I\Delta J$, say $i\in I\setminus J$, because any edge $e=ij$ incident to $i$ introduces a variable $x_e$ that appears only in the $(i,j)$-entry and therefore cannot cancel. Therefore, the condition of Lemma~\ref{lem:key} is satisfied. 

As long as $I\Delta J\neq\emptyset$, this allows a reduction to a smaller instance; if $I\Delta J=\emptyset$, we have   $I=J$ and Tutte’s theorem applies.

 In this way, Tutte's theorem extends to path matchings, establishing a good characterisation for their maximum value. 
 Interpreting this NP$\cap$coNP characterisation and membership in RP as evidence of polynomial-time solvability, Cunningham and Geelen were then able to prove that the maximum path matching problem is indeed solvable in polynomial time.

\smallskip
\noindent{\bf Algebraic Complexity}:      
The use of determinants in matching theory foreshadowed the notion of algebraic complexity. Determinants are polynomials with exponentially many monomials whose values can nevertheless be computed efficiently. This phenomenon is formalized via algebraic circuit complexity, with complexity classes such as VP and VNP, analogues of P and NP.

A recent breakthrough by Srinivasan \cite{algebraic} constructs explicit multilinear polynomials separating ``monotone VP'' from ``monotone VNP'' by demonstrating that a sequence of polynomials with nonnegative coefficients can require exponential-size monotone agebraic circuits.  

Algebraic complexity theory,  beyond formalizing the phenomenon that polynomials of enormous apparent complexity may admit efficient evaluation, also furnishes a conceptual laboratory in which theorems such as VP$\ne$VNP play a role analogous to   the classical, legendary, mysterious, still open  ``Is P$=$NP ?'' question.

\smallskip
\noindent{\bf Enumeration}: Lovász's method extends from decision to counting problems. When a graph admits a Pfaffian orientation (see above), the number of perfect matchings can be computed exactly by evaluating a determinant. Determining when such orientations exist is a natural and central open question \cite[Section 8.3]{LPL}.

Counting perfect matchings arises in statistical physics and motivates several open problems surveyed in  \cite[Chapter 8]{LPL}. One long-standing conjecture asserts that regular graphs in which every edge lies in a perfect matching have exponentially many perfect matchings; this has been proved for cubic graphs \cite{enum}. 

\medskip
This section has chosen to focus on one kind of application of Tutte’s original work on matchings. Entirely graph-theoretical consequences of  Tutte’s theorem, such as those explored by Szigeti \cite{Sz}, form a rich direction that lies beyond our scope, but can be found in the literature.

\subsection{A short combinatorial proof}\label{subsec:short}

We finish Section~\ref{sec:Tutte} with a combinatorial proof of Tutte's theorem.
Our version   is inspired  by Lemma~\ref{lem:key} specialized to an elementary combinatorial statement, and is also influenced by Edmonds' algorithm \cite{E} as presented in  \cite[Exercises 7.33–7.34]{exercises}.   The following  proof, of  the so called Tutte-Berge minmax form Theorem~\ref{thm:Berge}, combines these ideas in a simple way: 
 
\begin{lemma}\label{lem:path} Let $G$ be a graph, and $uv\in E(G)$. Then either
	$\nu (G-u)<\nu (G)$, or  $\nu (G-v)<\nu (G)$, or else  for any maximum
	matching $M_u$  of $G-u$, and $M_v$ of $G-v$: $M_u\cup M_v$
	contains a $(u,v)$-path $P$ alternating  between $M_u$ and
	$M_v$.
\end{lemma}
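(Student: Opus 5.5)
Assume $\nu(G-u)=\nu(G-v)=\nu(G)=:\nu$, fix arbitrary maximum matchings $M_u$ of $G-u$ and $M_v$ of $G-v$, and examine $\Sigma:=M_u\Delta M_v$, in the spirit of the combinatorial proof of Lemma~\ref{lem:Jacobi}. Every vertex has degree at most $2$ in $M_u\cup M_v$, so $\Sigma$ is a disjoint union of alternating paths and even circuits. Moreover $u$ is missed by $M_u$ and $v$ by $M_v$. We may assume $uv\notin M_u\cup M_v$: otherwise $uv\in M_v$ (since $u$ is missed by $M_u$) and $uv\in M_u$, so $uv$ would lie in both, impossible because $u$ is not covered by $M_u$. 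The goal is to show that the component of $\Sigma$ containing $u$ is a path ending at $v$. That path alternates between $M_u$ and $M_v$ and lies in $M_u\cup M_v$, which is what the statement asks for.

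First suppose $u$ is missed by both matchings. Then $M_v$ is a matching of $G-u-v$ (it avoids $v$ and $u$), so $M_v+uv$ is a matching of $G$ of size $\nu+1$, a contradiction. Hence $u$ is covered by $M_v$. It is not covered by $M_u$, so $u$ has degree $1$ in $\Sigma$ and is the endpoint of an alternating path $P$ starting with an $M_v$-edge. Symmetrically, $v$ is covered by $M_u$ but not by $M_v$, so $v$ is the endpoint of an alternating path $Q$ starting with an $M_u$-edge.

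Suppose $P\ne Q$, and let $w$ be the other endpoint of $P$. We exchange along $P$ and distinguish cases by the parity of $P$.

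\emph{$P$ ends with an $M_v$-edge.} Then $w$ is missed by $M_u$ and $P$ has one more $M_v$-edge than $M_u$-edges. The matching $M_u\Delta P$ has size $\nu+1$. It must cover $u$, since $u$ is now covered by $M_v\cap P$. If $w\ne v$, it is a matching of $G-v$? No: we only need a matching of $G$ of size $\nu+1$, and $M_u\Delta P$ is already such a matching, which contradicts $\nu(G)=\nu$.

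\emph{$P$ ends with an $M_u$-edge.} Then $w$ is missed by $M_v$, and $w\ne v$ because $v$ lies on $Q\ne P$. The matching $M_v\Delta P$ has the same size $\nu$, avoids $u$ and $w$, and still misses $v$. Since $v$ is incident to no edge of $P$, the edge $uv$ can be added, giving a matching of size $\nu+1$, a contradiction.

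Hence $P=Q$ is a $(u,v)$-path alternating between $M_u$ and $M_v$.

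The main obstacle is the even-length case, where exchanging along $P$ does not immediately enlarge a matching. The extra edge $uv$ has to be used, and that is legitimate only because $P\ne Q$ keeps $v$ off $P$. I will check these endpoint and parity bookkeeping details carefully. The lemma needs no special assumption about loops or parallel edges, since the argument only uses matchings and their symmetric differences.
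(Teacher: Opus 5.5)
Your proof is correct and follows the paper's argument: $M_v$ must cover $u$ and $M_u$ must cover $v$ (else $uv$ could be added). If the alternating path $P$ from $u$ ended at some $w\ne v$, exchanging along it gives either a matching $M_u\Delta P$ of size $\nu+1$ (odd case) or a maximum matching $M_v\Delta P$ missing both $u$ and $v$, to which $uv$ can be added (even case); your explicit parity split is slightly more careful than the paper's one-line version, which spells out only the second alternative. One slip in the write-up: in the even case $M_v\Delta P$ covers $w$ rather than avoiding it, which is harmless because only the facts that it misses $u$ and $v$ are used (and $uv\notin M_u\cup M_v$ needs no argument, since $u$ is missed by $M_u$ and $v$ by $M_v$).
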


\prove Suppose $\nu (G-u)=\nu (G)=\nu (G-v)$. Then $M_u$ covers $v$, and $M_v$ covers $u$, otherwise the edge $uv$ could be added for a larger matching of $G$. 

Since all degrees of $M_u\cup M_v$ are $0$, $1$ or $2$, this union is the disjoint union of alternating paths.
Moreover, {\em the other endpoint of the alternating path containing $u$ is $v$}: if it were  some  $x\ne v$, then exchanging  on it, we get again a matching of maximum size missing both $u$ and $v$. (Figure~\ref{fig:paths}).
\endproof
\begin{figure}[h]
	\centering
	\includegraphics[scale=0.4]{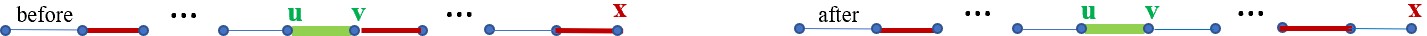}
		\vspace*{-0.1cm}
		\caption{$M_u$:red(thick); $M_v$:blue(thin); $uv$:green; left: before exchange; right: after augmenting }
	\label{fig:paths}
\end{figure} 	
Bipartite matchings  are discussed in  \cite{Cameron}. We only state K\H{o}nig’s theorem \cite{Konig} in passing:
\begin{theorem} If  $G=(V,E)$ be a bipartite graph, then $\tau (G)=\nu
	(G)$.
	\end{theorem}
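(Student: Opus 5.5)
We prove König's theorem by induction on $|E|$ using Lemma~\ref{lem:path}. The inequality $\tau(G)\ge\nu(G)$ is immediate, since the edges of a matching are disjoint and a vertex cover must contain a distinct endpoint of each. It remains to show $\tau(G)\le\nu(G)$. If $E=\emptyset$ both sides are $0$.

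For the inductive step, pick any edge $uv\in E$ and apply Lemma~\ref{lem:path}, distinguishing its three alternatives.

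\emph{Case $\nu(G-u)<\nu(G)$.} Here $\nu(G-u)=\nu(G)-1$. The graph $G-u$ is bipartite with fewer edges; if $u$ is isolated we instead pick an edge elsewhere, so the deletion is proper. By induction $G-u$ has a vertex cover $C$ with $|C|=\nu(G)-1$. Then $C\cup\{u\}$ covers every edge of $G$ and has size $\nu(G)$. The case $\nu(G-v)<\nu(G)$ is symmetric.

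\emph{Remaining case.} Here $\nu(G-u)=\nu(G)=\nu(G-v)$. Lemma~\ref{lem:path} provides a $(u,v)$-path $P$ in $M_u\cup M_v$ alternating between $M_u$ and $M_v$. The edge at $u$ lies in $M_v$, because $M_u$ misses $u$. Likewise the edge at $v$ lies in $M_u$. An alternating path whose first and last edges come from different matchings has an even number of edges. So $P$ is an even $(u,v)$-path, and together with the edge $uv$ it forms an odd circuit. This contradicts bipartiteness, so this case cannot occur, and the induction closes.

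The only step needing care is the parity bookkeeping in the last case: we must confirm that the end edges of $P$ come from different matchings, which is exactly why $M_u$ missing $u$ and $M_v$ missing $v$ is used. Everything else is routine. We should also ensure the inductive hypothesis is applied to a graph with fewer edges, which deleting a non-isolated vertex guarantees.
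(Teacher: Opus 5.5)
Your proof is correct and follows essentially the same route as the paper: induction via Lemma~\ref{lem:path}, with the third alternative excluded because it yields an even $(u,v)$-path that closes an odd circuit with $uv$, and the first two alternatives giving $\tau(G)\le\tau(G-u)+1\le\nu(G-u)+1\le\nu(G)$. Your parity check (the end edge of $P$ at $u$ lies in $M_v$ and the one at $v$ lies in $M_u$) makes explicit what the paper states in one phrase. Your remark about $u$ possibly being isolated is vacuous, since $u$ is an endpoint of the chosen edge $uv$.
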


\prove The inequality $\tau (G)\ge\nu
(G)$ is straightforward. For the reverse inequality, we proceed by induction. Let $uv\in E$ be arbitrary. Either deleting $u$ or deleting $v$ decreases $\nu$ -- say $\nu (G-u)<\nu (G)$ -- since otherwise there exists an even $(u,v)$ path by Lemma~\ref{lem:path}, contradicting bipartiteness. Then  $\tau(G)\le \tau(G-u)+1\le \nu(G-u)+1\le\nu(G)$, where we also used the theorem for $G-u$.\endproof 

\noindent{\bf Short proof of Theorem~\ref{thm:Berge}}:
Let $uv\in E$ be arbitrary. If $\nu (G-u)<\nu (G)$,  we are done: a set  $X\subseteq V\setminus\{u\}$ satisfying the min-max equality in $G-u$ extends to $X\cup\{u\}$  for $G$. 
 
 Otherwise, if  $\nu (G-u)=\nu (G)=\nu (G-v)$, Lemma~\ref{lem:path} provides an alternating path $P$ (which becomes a circuit if the edge $e$ is added). In $G/P$ the number of missed vertices by any matching $M$ does not increase upon expanding  $P$: whether or not  the contracted vertex is matched by $M$,  one can extend  $M$ by adding a perfect matching of the circuit  $P\cup \{uv\}$ removing the vertex which is matched outside $P$, or an arbitrary vertex if unmatched. \endproof

The last reasoning of the proof also explains the correctness of  Edmonds’ algorithm \cite{E,exercises}[Exercises 7.33–7.34]: unshrinking an odd cycle, the number of missed vertices remains, and  barriers remain barriers.

\section{Classical factors}\label{sec:classical} 

In a subsequent article \cite{Tutte_factor},   Tutte derived necessary and sufficient conditions  for the existence of subgraphs with prescribed degrees not necessarily equal to~$1$. His methods and results naturally extend to more general settings, allowing upper and lower degree bounds and parity conditions, and are extendable to orientations under degree constraints. 

In this section we present these  generalisations in a unified framework: first parity-restricted subgraphs, then degree-bounded subgraphs, and finally their simultaneous occurrence   with possible extensions to orientations.  Since the  relevant definitions are most naturally introduced together, we adopt a broader perspective when we introduce the relevant notions 
in preparation for the abstract generalisation of the next section,  which builds on the results established here.  

We consider subgraphs whose degree vectors are required to lie in a  non-empty set $H\subseteq \mathbb{Z}_+^V$. Such subgraphs are called {\em $H$-factors}. Even if  a $H$-factor does not exist, we are interested in $F\subseteq E$ minimizing the distance $\mu(d_F, H)>0$.  We simplify the notation: $\mu(F, H):=\mu(d_F, H)$  is also called the {\em deficiency} of $F$ (with respect to $H$); $F\subseteq E$ attaining this minimum is {\em ($H$-)optimal}.  The degree constraints  are described independently at each vertex:  $H$ is defined as the Cartesian product of sets of one-dimensional vectors, that is, of sets of numbers. 

\medskip
We say that $H\subseteq \mathbb{Z}^V$ is a  {\em sponge},   if it is the Cartesian product of nonempty subsets  $H(v)\subseteq\mathbb{Z}_+$, $v\in V$,    
and each $H(v) (v\in V)$ is   a one-dimensional {\em sponge}, i.e. 
\vspace{-0.2cm}
	\[\hbox{ if } a,b\in H(v), \hbox{ and }  a< i< b \hbox{ with } i\notin H(v), \hbox{ then }   i-1,  i+1\in H(v), \vspace{-0.2cm}\]     
 i.e. there are no ``gaps'' of at least two consecutive integers in $H(v)$.
 We write $l$ and $u$ for the vector of entrywise minima and maxima of $H$.
 
 For a graph $G=(V,E)$  a sponge (called  a ``prescription'' in  \cite{factors}) is restricted to be $H\subseteq  \mathbb{Z}_+^V$, and then   $0\le l\le u\le d$, where   $d\in\mathbb{Z}_+^V$ is the degree function. 
 
A {\em gap} of  $K\subset \mathbb{Z}$ is a closed interval  $[i,j]$, $i, j\in\mathbb{Z}, i\le j$ such that $K\cap [i, j]=\emptyset$, and $i-1, i+1\in K$; its length is $j-i+1$. 

It is essential to restrict ourselves to sponges \cite[Section 10.2, paragraph after Exercise 10.2.2]{LPL}: the $H$-factor problem is already \NP-hard in cubic graphs when $H(v)$ is $\{0,3\}$ or $\{1\}$ for each $v\in V$. In contrast, when $H$ is a sponge, a rich structural theory exists: degree-constrained factors with parity conditions and bounds were analysed in  \cite{structure}, and extended to $H$-factors where $H$ is a sponge \cite{factors}.  However,  for some time  {\em there was no  a polynomial algorithm  for  sponges, with the existence question of $H$-factors  explicitly raised for sponges in  \cite{factors}},  and repeated in  \cite{LPL}.   

A set of integers is an {\em interval} if it contains all integers between its minimum and maximum, and a {\em parity sponge} if it contains every second integer in this range. Intervals and parity sponges are referred to as  {\em classical} sponges. 
A sponge   $H\subseteq \mathbb{Z}^V$ is called {\em interval sponge},  {\em parity sponge} or {\em classical sponge} if   $H(v)$ is of the corresponding type for each $v\in V$; the associated factors are  named accordingly. 
Classical sponges may thus have both  {\em parity-vertices}, and {\em interval-vertices}. 

It is convenient to describe a classical sponge $H$ with the triple  $(l,u,\Pi)$, where $l, u:V\rightarrow \mathbb{Z}_+$ are as before, and $\Pi\subseteq V$ is the set of parity vertices. We assume without loss of generality that $l(p)\equiv u(p) \mod 2$    for all $p\in \Pi$, and that $\{v\in V:l(v)=u(v) \}\subseteq \Pi$; the corresponding sponge is then denoted by $H_{(l,u,\Pi)}$,  and $H_{(l,u,\Pi)}$-factors are also called {\em $(l,u,\Pi)$-factors}. 

The following theorem provides a good characterisation for the existence of classical factors. It is essentially the same as the main result of  \cite{Tutte_factor} or of  \cite{subgraphs} for interval sponges; a polynomial algorithm  follows from  \cite{EJ}.   We present an algorithmic proof in Section~\ref{subsec:bounds}.  For simplicity we state  only the existence version but the minimisation of the deficiency can be deduced with the same method. The ultimate generalisation in Section~\ref{sec:jump} addresses deficiencies directly.   

 \begin{theorem}\label{thm:factor}
No  $(l, u, \Pi)$-factor exists if and only if  there exist disjoint $L, U\subseteq V$ such that 
 \[ l(L) + d_G(L,U) + \omega(L,U) > u(U)+ \sum_{x\in L} d_G(x), \]
 where $\omega(L,U)$ is the number of $(L,U)$-{\em odd}  components $C$ of $G-(L\cup U)$, i.e.~components satisfying $C\subseteq\Pi$   for which the parity of $l(C)\equiv u(C)\,\mod~2$ differs from that of $d(C,L)$.  
 \end{theorem}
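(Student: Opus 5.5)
Necessity is a counting argument; for sufficiency I plan to reduce to the Tutte--Berge theorem (Theorem~\ref{thm:Berge}) through a gadget construction, and then reinterpret the barrier it gives back in $G$.

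\textbf{Necessity.} Suppose $F$ is an $(l,u,\Pi)$-factor and $L,U$ are disjoint. Then
\[ l(L)\le \sum_{x\in L} d_F(x) = 2|F(L)|+d_F(L,U)+d_F(L,W), \]
where $W:=V\setminus(L\cup U)$. Also
\[ \sum_{x\in L}d_G(x)=2|E(L)|+d_G(L,U)+d_G(L,W). \]
Hence the degree deficit of $F$ at $L$, namely $\sum_{x\in L}(d_G(x)-d_F(x))$, is at least $d_G(L,U)-d_F(L,U)+d_G(L,W)-d_F(L,W)$. On the other side, $d_F(L,U)+d_F(U,W)\le u(U)$.

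Now take an $(L,U)$-odd component $C$. Since $C\subseteq\Pi$, the parities force $d_F(C,L\cup U)\equiv u(C) \pmod 2$. If $d_F(C,U)=0$, then $d_F(C,L)$ must differ in parity from $d_G(C,L)$, so $C$ has at least one edge to $L$ missing from $F$. So each odd component contributes at least one unit, either to $d_F(U,W)$ or to $d_G(L,W)-d_F(L,W)$. Summing these inequalities gives
\[ l(L)+d_G(L,U)+\omega(L,U)\le u(U)+\sum_{x\in L}d_G(x). \]

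\textbf{Sufficiency.} I would apply the classical Tutte gadget reduction and then invoke Theorem~\ref{thm:Berge}.
\begin{itemize}
\item Subdivide each edge $e=xy$ by two new vertices $e_x,e_y$, joined to each other by an edge.
\item Replace each vertex $v$ by a set $K_v$ of vertices, completely joined to the $d(v)$ subdivision vertices $e_v$ with $e$ incident to $v$.
\item For a parity vertex, take $|K_v|=d(v)-u(v)$. Add to it a clique on $(u(v)-l(v))$ extra vertices, joined to the $e_v$'s, so that exactly $d(v)-k$ of the $e_v$'s get matched into $K_v$ together with the extra vertices, with $k\in\{l,l+2,\dots,u\}$. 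The clique absorbs an even surplus.
\item For an interval vertex, use $d(v)-u(v)$ vertices completely joined, plus $u(v)-l(v)$ extra vertices adjacent to all $e_v$ and pairwise non-adjacent, so that any number between $l$ and $u$ is realisable.
\end{itemize}
I would check that perfect matchings of the gadget graph $G'$ correspond exactly to $(l,u,\Pi)$-factors, with $e\in F$ iff the edge $e_xe_y$ is not used.

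\textbf{Main obstacle.} If no factor exists, Theorem~\ref{thm:Berge} gives a barrier $X$ in $G'$. The hard part is normalising $X$ so that it has the form of a pair $(L,U)$. Here $L$ consists of vertices whose whole $K_v$ side lies in $X$, and $U$ consists of vertices whose $e_v$'s lie in $X$.

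I would do this by the standard uncrossing moves, each of which does not decrease the deficiency $\text{odd}(G'-X)-|X|$. Since the deficiency is a maximum, each move keeps $X$ a barrier. The moves are:
\begin{itemize}
\item If $X$ meets a homogeneous class such as $K_v$ or the interval-extra set only partially, either add the whole class or remove it from $X$. Twin vertices behave identically, so one of these choices is at least as good.
\item Remove an $e_x$ from $X$ unless this is forced.
\item Discard parity gadgets that are split.
\end{itemize}
Once $X$ is normalised, counting $|X|$ and the odd components of $G'-X$ should yield exactly $u(U)+\sum_{x\in L}d(x)$ on one side, and $l(L)+d(L,U)+\omega(L,U)$ on the other:
\begin{itemize}
\item Isolated subdivision vertices give the $d(L,U)$ terms.
\item Singleton extra vertices give $l$.
\item Components of $G-(L\cup U)$ that lie inside $\Pi$ give $\omega$. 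Interval vertices make such a component even-adjustable, which is why only $C\subseteq\Pi$ counts.
\end{itemize}
The bookkeeping of these parities is where I expect most of the effort. Alternatively, following the paper's announced plan, the same argument could be carried out algorithmically in Section~\ref{subsec:bounds} by augmenting paths, with the obstruction read off from the final alternating forest.
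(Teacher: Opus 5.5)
Your necessity argument is correct and is essentially the paper's ``easy direction''. The sufficiency direction, however, breaks already at the interval gadget. As you describe it, the $u(v)-l(v)$ extra vertices are pairwise non-adjacent and adjacent to the $e_v$'s. Then every perfect matching of $G'$ must match all of them into the $e_v$'s. So exactly $d(v)-l(v)$ of the $e_v$'s are absorbed, and the gadget encodes $d_F(v)=l(v)$, not $l(v)\le d_F(v)\le u(v)$.

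No better local design exists. Suppose the gadget of $v$ has $g$ vertices and $d(v)-k$ of the $e_v$'s are not matched to their partners. Then these $g+d(v)-k$ vertices are perfectly matched among themselves, so $k\equiv g+d(v)\pmod 2$, and two consecutive degrees are never both realisable. Hence for $\Pi\neq V$ the claimed bijection between perfect matchings of $G'$ and $(l,u,\Pi)$-factors is false, and a correct reduction must be global. One option is a new vertex $z$ joined to each $v\notin\Pi$ by $u(v)-l(v)$ parallel edges, prescribing $\{u(v)\}$ at $v$. At $z$ you would prescribe the integers of $[0,\sum_{v\notin\Pi}(u(v)-l(v))]$ congruent to $u(V)$ modulo $2$, which is the parity that $\sum_{v\notin\Pi}(u(v)-d_F(v))$ is forced to have. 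Separately, with your own gadget $F$ must be the set of edges $e$ for which $e_xe_y$ \emph{is} in the matching, not the complement.

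This global structure is where the clause $C\subseteq\Pi$ in $\omega(L,U)$ actually comes from; it is not a local ``even-adjustability'' of interval gadgets. Through $z$, all components of $G-(L\cup U)$ containing an interval vertex merge into one component, which may add an extra odd component. To remove it you need a global parity argument: in a pure parity instance, the left-hand side minus the right-hand side is congruent to $l(V)$ modulo $2$, and this is even in the instance with $z$. You also need a case analysis on whether $z$ ends up in $L$, in $U$, or in neither. None of this is in your proposal.

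Moreover, even for $\Pi=V$, where your clique gadget is correct, the normalisation of the barrier is the whole content of the sufficiency direction, and it is only asserted. You do not show that your moves never decrease $\mathrm{odd}(G'-X)-|X|$, nor that the final count matches the formula exactly, and without further work it need not. For instance, a vertex with $l(v)=u(v)=d(v)$ has an empty inner gadget. A component of $G-(L\cup U)$ containing it can then split in $G'-X$, e.g.\ into isolated $e_v$'s whose partners lie in $X$, producing more odd components than the formula accounts for.

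The paper avoids gadgets entirely. It takes an $F$ of minimum deficiency and forms the bidirected auxiliary graph of Section~\ref{subsec:bounds}. Since no improving path exists (Lemma~\ref{lem:improving}), it reads $L$ and $U$ off the reachability sets $V^+,V^-$. Their structure (Theorem~\ref{thm:reachability}) forces equality in every inequality of the easy direction, so the violation equals the deficiency of $F$.
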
 
\vspace{-0.15cm}
We first verify  the so called ``easy direction''.
Suppose that an $(l, u, \Pi)$-factor $F$ exists, and let $L, U\subseteq V$ be disjoint. We first  estimate from above the sum of the degrees in  $F$ on $U$ together with the sum of  the degrees in $E\setminus F$ on  $L$: 
\vspace{-0.2cm}
\[\sum_{y\in U}d_F(y) + \sum_{x\in L}  (d_G(x) - d_F(x))\le  u(U) - l(L)  +\sum_{x\in L} d_G(x). \]
\vspace{-0.2cm}  
The left hand side can be written as
 \[2 |E(U)\cap F|+ d_F(U,V\setminus (U\cup L))+ d_F(U,L) +  2 |E(L)\setminus F|+ d_{E\setminus F}(L,V\setminus (U\cup L))+ d_{E\setminus F}(L,U)\ge \]
 \vspace{-0.49cm}
\[d_F(U,V\setminus (U\cup L))+d_{E\setminus F}(L,V\setminus (U\cup L)) + d_G(U,L)\ge \omega(L,U)  + d_G(U,L),\] 
where in the first inequality $|E(U)\cap F|,\, |E(L)\setminus F|\ge 0$,  $d_F(U,L)+d_{E\setminus F}(L,U)= d_G(L,U)$ are used. Moreover, each $(L,U)$-odd component of $G-(L\cup U)$ must contribute at least one edge either of $F$ to $U$, or  of $E\setminus F$ to $L$, yielding the final inequality. We conclude by  
\[\omega(L,U)  + d_G(U,L)\le u(U) - l(L)  +\sum_{x\in L} d_G(x).\]
Thus the inequality of the theorem is violated whenever an $(l, u, \Pi)$-factor exists, completing the  direction of  proof under consideration. In fact, the difference of the left and right hand sides is a lower bound for the deficiency  $\mu(F, H_{(l,u,\Pi)})$, and in Section~\ref{subsec:bounds} we present $F\subseteq E$ with equality here. We in fact outline a  unified approach   covering   factorisation and orientation problems under degree constraints, including  algorithmic aspects. Theorem~\ref{thm:factor} will be proved as a representative example; by specialisation this gives  yet another proof of Theorem~\ref{thm:Berge}, and contains min-max, structural and algorithmic consequences to classical factors outlined in Section~\ref{subsec:bounds}.

\subsection{Parity}\label{subsec:parity}

 In this subsection we  consider the  basic  special case of graph factors  when $H(v)$ $(v\in V)$ consists of every second integer in the interval $[0,d(v)]$. Then   $H$ is determined by a set $T\subseteq V$: $H(t)$ is the set of odd integers of $[0,d(t)]$ for $t\in T$, and the set of even ones for $v\in V\setminus T$; an $H$-factor is then a {\em $T$-join}.
   Allowing vertices $u\in U\subseteq V$ with $H(u)= [0,d(u)]$  does  not lead to a genuine extension: deleting the edges between vertices of $U$ and   identifying $U$ reduces existence and optimisation to that of $T$-joins.    

 There exists a $T$-join if and only if $|T|$ is even. The existence of degree-bounded $T$-joins is  discussed in the next section, and they  will serve as tools for the most general factors.
 
  A cut $\delta(X)$ $(X\subseteq V$) is a $T$-cut, if $|X\cap T|$ is odd. Every $T$-cut intersects every $T$-join, implying $\tau(G,T)\ge\nu(G,T)$, and $\tau(G,T)\ge\nu_2(G,T)/2$, where  $\tau(G,T)$
is the mininum size of a $T$-join,   $\nu(G,T)$  the maximum number of disjoint $T$-cuts, and  $\nu_2(G,T)$  the maximum number of $T$-cuts covering each edge at most twice. 

A convenient ``language'' and main tool for $T$-joins is provided  by {\em conservative weight functions}, -- i.e. those with $w(C)\ge 0$ for every circuit $C$ --, and their {\em distance functions} in undirected graphs.  
For (not necessary nonnegative) $w:E\rightarrow \mathbb{R}$, and $X\subseteq E$,  $w[X]:E\rightarrow \mathbb{R}$ denotes the following function: 
\[w[X] (e):=
\begin{cases}
	\,\,\,\,\, w(e) \hbox{ if }  e\in E\setminus X,\\
	-w(e) \hbox{ if }  e\in X
\end{cases}
\]
For any $F\subseteq E$, $T_F\subseteq V$ is the set of vertices of odd degree of $F$. Guan \cite{Guan} observed: 

\begin{fact}  $F\subseteq E$ is a $w$-minimum $T_F$-join, if and only if $w[F]$ is conservative.  
\end{fact}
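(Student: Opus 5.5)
The plan is to reduce Guan's Fact to the observation that the $T_F$-joins are exactly the sets $F\Delta C$ with $C$ ranging over the \emph{cycles}, i.e.\ the edge sets in which every degree is even, equivalently edge-disjoint unions of circuits. Indeed, $J$ is a $T_F$-join if and only if $d_J\equiv d_F \pmod 2$ entrywise, if and only if every degree of $J\Delta F$ is even. So every $T_F$-join has the form $F\Delta C$ for a cycle $C$, and conversely each such set is a $T_F$-join. The weight change is then a single computation,
\[ w(F\Delta C)-w(F)= w(C\setminus F)-w(C\cap F)= w[F](C), \]
which is immediate from the definition of $w[F]$. Hence $F$ is a $w$-minimum $T_F$-join if and only if $w[F](C)\ge 0$ for every cycle $C$.

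It remains to show that this last condition is equivalent to conservativeness of $w[F]$, i.e.\ to $w[F](C)\ge0$ for every \emph{circuit} $C$. One direction is trivial, since circuits are cycles. For the other, I decompose a cycle $C$ into edge-disjoint circuits $C_1,\dots,C_k$, using the standard fact that a graph with all degrees even has an edge-disjoint circuit decomposition: peel off a circuit and induct on $|C|$. Additivity gives $w[F](C)=\sum_i w[F](C_i)\ge 0$. Loops and parallel edges cause no problem here: a loop is a circuit on its own, and two parallel edges form a circuit of length $2$.

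Both directions then follow. If $w[F]$ is conservative, every $T_F$-join $F\Delta C$ has weight at least $w(F)$. Conversely, if some circuit $C$ has $w[F](C)<0$, then $F\Delta C$ is a $T_F$-join of strictly smaller weight, so $F$ is not minimum. None of these steps is a real obstacle; the only point that needs care is the identification of $T_F$-joins with the sets $F\Delta C$ for cycles $C$, together with the sign bookkeeping in $w[F]$.
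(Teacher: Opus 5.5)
Your proof is correct and rests on the same identity as the paper's one-line proof, $w[F](C)=w(C\setminus F)-w(C\cap F)=w(F\Delta C)-w(F)$. The paper writes out only the direction ``$F$ minimum $\Rightarrow$ $w[F]$ conservative''; your identification of the $T_F$-joins with the sets $F\Delta C$ for even-degree $C$, together with the decomposition of such $C$ into circuits, supplies the converse that the paper leaves implicit.
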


\prove For any circuit $C\subseteq E$, $w[F](C)=w(C\setminus F)-w(C\cap F)=w(C\triangle F)-w( F)\ge 0$, since 
$C\triangle F$ is a $T_F$-join  $F$ is a minimum  $T_F$-join. 
\endproof

We henceforth restrict ourselves to weights   $w:E\rightarrow\{1,-1\}$. General integer or rational weights reduce to this case by subdivision and scaling, and contraction of $0$-weight edges,  preserving polynomial-time solvability; deducing weighted versions of the structural theorems requires some more observations, but the extensions are simple \cite{pot}. 
We therefore restrict ourselves here to $\pm1$-weights which are completely sufficient for studying ``potentials'' in undirected graphs, describing  structural properties of $T$-joins, including Tutte's theorem that we deduce.

\begin{figure}[h]
	\centering
	\includegraphics[scale=0.7]{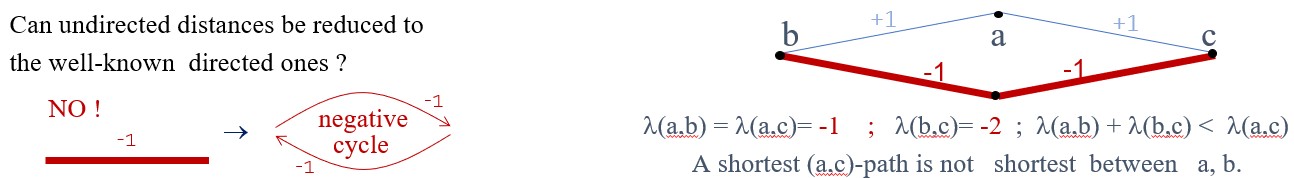}
	\caption{Distance function violating the triangle inequality, the subpath property, etc.  }
	\label{fig:shortest}
\end{figure} 

Let $\lambda(x,y):=\lambda_w(x,y):=\min\{w(P):\hbox{$P$ is an $(x,y)$-path}\}$,  $x_0\in V$ arbitrary,  $\lambda(x):=\lambda(x_0,x)$. If  $w$ is conservative, it is useful to know that $\lambda_w(x,y):=\min\{w(P):\hbox{$P$ is an ${x,y}$-join}\}$. Let $E^-:=\{e\in E: w(e)=-1\}$.  The following fact, lemma and theorem are from  \cite{JCTB, pot}:

\begin{fact}\label{fact:distances} Let $w:E\rightarrow\{1,-1\}$ be conservative.  Then 
	
	\begin{itemize}
	\item [{\rm(i)}]  For each $e=uv\in E$: $|\lambda(u)-\lambda(v)|\le 1$ and if $G$ is bipartite the equality holds. 
		\item [{\rm(ii)}] Let $b\in V$, $\lambda(b)=\min\{\lambda(v): v\in V\}$. Then $|\delta(b)\cap E^-|=1$, unless $b=x_0$, when it is $0$.  
	\item [{\rm(iii)}]  If $C$ is a circuit, $w(C)=0$, then $w[C]$ is  conservative, and $\lambda_w=\lambda_{w[C]}$.
\end{itemize}
\end{fact}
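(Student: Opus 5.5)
The plan is to derive all three parts from one tool stated just before the Fact: for conservative $w$, $\lambda_w(x,y)$ equals the minimum weight of an $\{x,y\}$-join. We also use the dual fact that any $\{x,y\}$-join has weight at least $\lambda(x,y)$. Joins can be modified freely by symmetric differences, unlike paths, and this sidesteps the failure of the triangle inequality and of the subpath property shown in Figure~\ref{fig:shortest}.

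\emph{Part (i).} Let $J$ be an $\{x_0,u\}$-join with $w(J)=\lambda(u)$. For the edge $e=uv$, the set $J\Delta\{e\}$ is an $\{x_0,v\}$-join. Its weight is at most $w(J)+1$, since $w(e)=\pm1$. Hence $\lambda(v)\le\lambda(u)+1$, and by symmetry $|\lambda(u)-\lambda(v)|\le1$.

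For the bipartite case, note that with $\pm1$ weights $w(J)\equiv|J| \pmod 2$. In a bipartite graph every $\{x_0,u\}$-join has the same parity as any $(x_0,u)$-path, namely the parity of the side of $u$ relative to $x_0$. The endpoints of an edge lie on different sides, so $\lambda(u)\not\equiv\lambda(v)\pmod 2$. Together with $|\lambda(u)-\lambda(v)|\le 1$ this forces equality.

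\emph{Part (ii).} First let $b=x_0$. A negative edge $x_0c$ is itself an $\{x_0,c\}$-join of weight $-1$, so $\lambda(c)\le -1<0=\lambda(x_0)$, contradicting minimality. Hence $|\delta(b)\cap E^-|=0$.

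Now let $b\ne x_0$, and take a path $P$ with $w(P)=\lambda(b)$. Let $e=ab$ be its last edge. Then $P-e$ is an $(x_0,a)$-path, so $\lambda(a)\le\lambda(b)-w(e)$. Minimality gives $\lambda(a)\ge\lambda(b)$, so $w(e)=-1$, and $b$ has at least one negative edge.

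Suppose $f=bd$ is a second negative edge at $b$. Since $P$ is a path ending at $b$, it uses only the edge $e$ at $b$, so $f\notin P$. Then $P\cup\{f\}$ has odd degree exactly at $x_0$ and $d$, so it is an $\{x_0,d\}$-join. This holds even when $d$ lies on $P$, because degrees are all that matter for a join. Its weight is $\lambda(b)-1$, so $\lambda(d)\le\lambda(b)-1$, contradicting the choice of $b$.

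\emph{Part (iii).} This is a short computation using $w(C)=0$. For any $D\subseteq E$,
\[ w[C](D)=w(D\setminus C)-w(D\cap C)=w(D\setminus C)+w(C\setminus D)=w(D\Delta C), \]
where the middle equality uses $w(C\cap D)+w(C\setminus D)=w(C)=0$.

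If $D$ is a circuit, then $D\Delta C$ has all degrees even, so it is an edge-disjoint union of circuits. Its $w$-weight is therefore nonnegative, and $w[C]$ is conservative.

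If $J$ is an $\{x_0,x\}$-join, then $J\Delta C$ is again an $\{x_0,x\}$-join, and $J\mapsto J\Delta C$ is a bijection on such joins. Moreover $w[C](J)=w(J\Delta C)$ by the same identity. Both $w$ and $w[C]$ are conservative, so the join characterisation applies to both, and the two minima coincide: $\lambda_w=\lambda_{w[C]}$.

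The only delicate step is the second half of (ii), ruling out two negative edges at $b$. The join viewpoint handles it, but we must use a minimising \emph{path}, not an arbitrary minimising join. This guarantees that $f\notin P$ and that $P\cup\{f\}$ is a legitimate $\{x_0,d\}$-join.
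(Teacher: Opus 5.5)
Your proof is correct and follows essentially the same route as the paper. You use the join characterisation of $\lambda$ with $P\Delta\{e\}$ for (i), the last edge of a shortest $(x_0,b)$-path together with the join $P\cup\{f\}$ for (ii), and the identity $w[C](D)=w(D\Delta C)$ for (iii). Two steps differ slightly: your parity argument makes the bipartite equality in (i) explicit, which the paper leaves unstated, and in (iii) your bijection $J\mapsto J\Delta C$ replaces the paper's two inequalities obtained via $w[C][C]=w$.
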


\prove {\rm (i)}: Let $P$ be a shortest $(x_0,v)$-path.  Then $P\Delta {e}$ is an
$\{x_0,u\}$-join. {\rm (ii)}: Let $Q$ be a shortest $(x_0,b)$-path. If the last edge $e=ab$ of $Q$ were not negative, $Q\setminus \{e\}$ would contradict  the definition of $b$. If there were another negative edge $f=bb'$ incident to $b$, then $P':=P\cup {f}$ would be an $\{x_0,b'\}$-join, $w(P')<w(P)$, contradicting the definition of $b$ again. 

\noindent
 {\rm (iii)}: If $Q$ is an arbitrary circuit, then $w[C](Q)=w(Q\setminus C)-w(Q\cap C)=w(Q\triangle C)-w(C)\ge 0-0$, since  $Q\triangle C$ is a disjoint union of cycles, so $w[C]$ is conservative.  Similarly, if $Q$ is an $(x,y)$-path, $x,y\in V$, then $Q\triangle C$ is an $\{x,y\}$-join and  $w[C](Q)\ge w(Q\triangle C)$, so choosing $Q$ to be $w$-shortest,   $\lambda_w(x,y)\ge\lambda_{w[C]}(x,y)$. Applying this to $w':=w[C]$ and using $w[C][C]=w$ we get $\lambda_w(x,y)\ge\lambda_{w[C]}(x,y)$, and therefore the claimed equality holds for any pair of vertices. 
\endproof

It is better to use $T$-joins with $|T|=2$ instead of paths to avoid case distinctions according to whether an appended vertex is contained in $P$ or not. This is also  essential in shortest path computations in undirected graphs (Figure~\ref{fig:shortest}). If the reader needs more details of   proofs they can be found in  \cite{JCTB} or  \cite{pot} (generalized to weights), for proving the following Lemma and Theorem:

\begin{lemma}\label{lem:shortest}
	Let $G$ be bipartite, and $w$, $b$ as in Fact~\ref{fact:distances}. Then the contraction of $\delta(b)$ preserves conservativeness and the distances from $x_0$, except   if $b=x_0$, when $\lambda (b^*, v)=\lambda (b, v)-1$ for all  vertices $v$ different from $b^*$, where $b^*$ is the new vertex arising by the contraction of $\delta(b)$.  
\end{lemma}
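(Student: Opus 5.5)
The plan is to translate everything into statements about $T$-joins with $|T|=2$, as recommended just before the lemma. The first step records the local structure at $b$.

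\textbf{Local structure at $b$.} By Fact~\ref{fact:distances}(ii), if $b\neq x_0$ then $b$ is incident to exactly one negative edge $e_0=ab$. If $b=x_0$, all edges of $\delta(b)$ are positive.

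By Fact~\ref{fact:distances}(i) together with bipartiteness, every neighbour $c$ of $b$ satisfies $|\lambda(c)-\lambda(b)|=1$. Minimality of $\lambda(b)$ then forces $\lambda(c)=\lambda(b)+1$ for every neighbour $c$, including $a$.

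Fix a shortest $\{x_0,b\}$-join $J$ with $w(J)=\lambda(b)$, chosen minimal. Then $J$ meets $\delta(b)$ only in $e_0$. Consequently, for every neighbour $c\neq a$ of $b$, the set $J\Delta\{bc\}$ is an $\{x_0,c\}$-join of weight $\lambda(b)+1$, and $J\setminus\{e_0\}$ is an $\{x_0,a\}$-join of weight $\lambda(b)+1$.

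\textbf{Conservativeness of $G/\delta(b)$.} A circuit $Q^*$ of $G/\delta(b)$ either avoids $b^*$, and is then a circuit of $G$, or it lifts to a path $Q$ in $G-b$ between two distinct neighbours $c,c'$ of $b$ (possibly passing through further neighbours). We must show $w(Q)\ge 0$.

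If one of $c,c'$ is $a$, close $Q$ with $ab$ and the other edge at $b$. The resulting circuit of $G$ has weight $w(Q)-1+1\ge0$ by conservativeness.

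The genuine case is when $bc$ and $bc'$ are both positive; here closing only gives $w(Q)\ge -2$. I would argue by contradiction. Suppose $w(Q)<0$; by bipartiteness and parity, $w(Q)\le -2$. Form $K:=J\Delta\{bc\}\Delta Q$, which is an $\{x_0,c'\}$-join. The overlaps $J\cap Q$ may contain negative edges, so the weight of $K$ is not simply $w(J)+1+w(Q)$. To handle this, I would decompose $J\Delta Q$ into a path plus circuits, using conservativeness (every circuit has nonnegative weight, so discarding circuits never increases weight). Alternatively, I would pass to $w[J']$ for a suitable join $J'$ via Guan's Fact, so that all the relevant paths start from a common potential.

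The goal is to produce an $\{x_0,b\}$-join, or a join ending at a neighbour of $b$, of weight strictly less than $\lambda(b)$, or to produce a negative circuit. Either outcome is a contradiction.

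\textbf{This overlap bookkeeping is the step I expect to be the main obstacle.} What I would try first is to take $Q$ of minimum weight among such paths and invoke Fact~\ref{fact:distances}(iii) to flip zero-weight circuits, which simplifies the symmetric differences.

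\textbf{Distances.} A path from $x_0$ in $G/\delta(b)$ lifts to a join in $G$ after inserting at most one edge of $\delta(b)$.

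If $b\neq x_0$, the only negative edge at $b$ is $e_0$, and every neighbour of $b$ is at distance exactly $\lambda(b)+1$. Using the conservativeness just established, inserting or deleting the connecting edge at $b^*$ never changes optimal values, so $\lambda$ is preserved, with $\lambda(b^*)=\lambda(b)$.

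If $b=x_0$, every shortest $(b,v)$-join leaves $b$ through a positive edge of $\delta(b)$, which disappears under the contraction. Hence $\lambda(b^*,v)=\lambda(b,v)-1$ for all $v\neq b^*$. The reverse inequality follows by lifting a shortest $(b^*,v)$-path and prepending one positive edge of $\delta(b)$.
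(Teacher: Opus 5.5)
\textbf{Conservativeness.} You correctly reduce to a $(c,c')$-path $Q$ in $G-b$ with $bc,bc'$ both positive and $w(Q)\le -2$. You then leave this case open and turn to an unfinished overlap bookkeeping with $K=J\Delta\{bc\}\Delta Q$. The missing observation is that no bookkeeping is needed. The set $C:=Q\cup\{bc,bc'\}$ is a circuit of $G$ with $w(C)=w(Q)+2\le 0$, so $w(C)=0$ by conservativeness. By Fact~\ref{fact:distances}(iii), $w[C]$ is conservative and $\lambda_{w[C]}=\lambda_w$, so $b$ still has minimum distance. Under $w[C]$, however, $bc$ and $bc'$ are negative in addition to $ab$; if $b=x_0$, then $x_0$ acquires two negative edges. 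Either way this contradicts Fact~\ref{fact:distances}(ii). This is exactly the paper's sketch: a zero circuit meets $\delta(b)$ either not at all or in one positive and one negative edge. As a minor correction, the lift of a circuit through $b^*$ meets $N(b)$ only at its two ends, and the case $c=c'$ gives a circuit of $G$.

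\textbf{Distances, case $b\neq x_0$.} This part has two concrete errors.

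First, $\lambda(b^*)=\lambda(b)$ is false. All of $\delta(b)$ is deleted, so any $(x_0,b^*)$-path lifts to an $(x_0,c)$-path with $c\in N(b)$, which gives $\lambda(b^*)=\lambda(b)+1$. For example, for the path $x_0yb$ with $w(x_0y)=1$ and $w(yb)=-1$, the contraction is the single positive edge $x_0b^*$.

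Second, a path through $b^*$ that enters from $c$ and leaves to $c'$ lifts only after inserting \emph{two} edges, $cb$ and $bc'$, not ``at most one''. When both are positive the lift is heavier by $2$, so $\lambda_{G/\delta(b)}\ge\lambda$ does not follow. Your sentence that inserting or deleting the connecting edge never changes optimal values is precisely what needs proof. By parity, a shortcut would yield a shortest $(x_0,v)$-path $R$ of $G$ that traverses $b$ through two positive edges, and this must be excluded; it is the second half of the paper's sketch.

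One way to exclude it:
\begin{itemize}
\item Let $J$ be a shortest $(x_0,b)$-path, and let $S$ be the $(b,v)$-path in a decomposition of $R\Delta J$ into a path and circuits.
\item $J\Delta S$ is an $\{x_0,v\}$-join and $R\Delta S$ is an $\{x_0,b\}$-join, and their weights sum to $\lambda(v)+\lambda(b)$. Hence $R\Delta S$ is a shortest $\{x_0,b\}$-join.
\item The path part of $R\Delta S$ must end with the negative edge $ab$, so $cb$ and $bc'$ lie in one of its zero-weight circuits.
\item That circuit is excluded exactly as in the conservativeness argument above.
\end{itemize}

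\textbf{Distances, case $b=x_0$.} Your treatment of this case is fine once conservativeness is established.
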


\prove (Sketch) If $C$ is a $0$ weight circuit or a shortest path from $x_0$, then $w(C\cap \delta(b))=0$, because $C$ is either disjoint from $\delta(b)$, or $C\cap \delta(b)$ consists of a positive and a negative edge, because of Fact~\ref{fact:distances} (iii) and (ii).  
 \endproof

We are now ready to state the main structure theorem about undirected distances \cite{JCTB}.

 Let $G$, $x_0$, $w$, $\lambda$ as before ($w$ is conservative), $m:=\min\{\lambda(x):x\in V\}$, $M:=\max\{\lambda(x):x\in V\}$,  and denote $\Dscr^i= \Dscr^i(\lambda)$ the (vertex-set of) the components of the graph $G(V^i)$ induced by the {\em level-set} $V^i:=\{v\in V: \lambda(v)\le i\}$, and $\hat \Dscr^i=\hat \Dscr^i(\lambda)$ those of $G(V^i)-\{uv: \lambda(u)=\lambda(v)=i\}$.
 
 \smallskip
 \noindent {\bf Note}: Both $\Dscr^i$ and $\hat \Dscr^i$ partition $V^i$, $(i=m, m+1, \ldots, M)$;     \cite{pot} extends this to weights.

 \begin{theorem}\label{thm:shortest} 
If $D\in\Dscr\cup\hat\Dscr$, then $|\delta(D)\cap E^-|=1$ if $x_0\notin D$, and $|\delta(D)\cap E^-|=0$, if  $x_0\in D$.
 \end{theorem}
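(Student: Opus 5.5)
Since the statement only involves $\pm1$-weights and conservativeness, I would argue by induction on $|V|$ (or on $|E|$), contracting the star $\delta(b)$ of a minimum-distance vertex $b$ as in Lemma~\ref{lem:shortest}. First I reduce to the bipartite case. Subdivide every positive edge into two positive edges; this changes all circuit weights and distances in a controlled way. A cleaner route is to note that the statement is about $\pm1$ weights as introduced, and that subdividing each edge $e$ into two edges of weight $w(e)$ doubles all $\lambda$ values. The components of level sets $V^i$ and $\hat V^i$ of the original graph then correspond to components of level sets of the subdivided graph at levels $2i$ and $2i+1$. Since a $+2$ edge is then a path of two $+1$ edges, and a $-2$ edge must be replaced by a path of two $-1$ edges, the counts of negative edges in cuts double or stay consistent. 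I would check this correspondence carefully, or alternatively prove the theorem directly without bipartiteness using Fact~\ref{fact:distances}(i), which only needs $|\lambda(u)-\lambda(v)|\le1$.

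For the main induction, take $b$ with $\lambda(b)=m$. For the minimal level $i=m$, each $D\in\Dscr^m$ consists of vertices at distance $m$; every edge inside has $\lambda$ equal at both ends. In the bipartite case there are none, so $D=\{b\}$ and Fact~\ref{fact:distances}(ii) gives exactly the claim. In general I would contract $\delta(b)$. By Lemma~\ref{lem:shortest}, conservativeness and distances from $x_0$ are preserved, with the shift by $-1$ when $b=x_0$, in which case I would re-root at $b^*$. The level sets of $G/\delta(b)$ are then the images of those of $G$, except that $b$ merges with its neighbours, which lie at level $m+1$.

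The key step is to check that for $D\in\Dscr^i\cup\hat\Dscr^i$ with $i\ge m+1$, $D$ either contains $b$ together with all of its neighbours, or it avoids $b$. In the latter case, if $D$ contains some neighbour $a$ of $b$, the edge $ab$ lies in $\delta(D)$, and I would track how $\delta(D)\cap E^-$ changes under contraction. The unique negative edge at $b$ (Fact~\ref{fact:distances}(ii)) is contracted away. So I need a case analysis. If $b\in D$, all neighbours of $b$ lie in $V^{m+1}\subseteq V^i$ and hence in $D$, so $\delta(D)$ is unchanged and induction applies. For sets $D\in\hat\Dscr^{m}$, or for $D\not\ni b$ at level $m$, Fact~\ref{fact:distances}(ii) applied to the relevant minimum vertex handles it directly.

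The main obstacle I expect is the $\hat\Dscr^{m+1}$ sets that contain a neighbour $a$ of $b$ but not $b$. This can happen only when $b\notin V^{m+1}$ fails, which is impossible, since $b\in V^{m+1}$ and $ab$ is not between two level-$(m+1)$ vertices. Hence $b$ and $a$ are in the same component. Making this adjacency bookkeeping airtight for both $\Dscr$ and $\hat\Dscr$, together with the root case $x_0\in D$ after the shift, is where I would concentrate the effort.
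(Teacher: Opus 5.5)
Your main line is the paper's proof: subdivide every edge into two edges of the same sign to get a bipartite graph with exactly doubled distances, then induct by contracting $\delta(b)$ at a minimum-level vertex via Lemma~\ref{lem:shortest}, with the singleton $\{b\}$ settled by Fact~\ref{fact:distances}(ii); drop the non-bipartite alternative, which would not work as stated, since Lemma~\ref{lem:shortest} requires bipartiteness and the components of $\Dscr^m$ need not be singletons there. The step you still have to make precise is the subdivision: as the paper remarks, an edge joining two vertices of level $i$ gets its new vertex at level $2i+1$, so $\hat\Dscr^i$ and $\Dscr^i$ become the components at levels $2i$ and $2i+1$ of the subdivided graph, and each edge of $\delta(D)$ contributes exactly one half-edge of the same sign to the new cut, so negative counts are preserved rather than doubled (for $\hat\Dscr^i$, a negative same-level edge inside $D$ would add two further negative half-edges, but the bipartite statement itself rules this out).
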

 
 \prove (Outline) Subdivide every edge $e=xy\in E$ into two with a new vertex $v_e$ to obtain the bipartite graph $\hat G=(\hat V,\hat E)$, and  $\hat w(xv_e):=\hat w(v_ey):= w(e)$; $\hat w$ is conservative, and the distances between the original vertices are   {\em exactly doubled}; keep the notation for the original vertices and $\hat\lambda$ denotes the distance function from $x_0$ in $\hat G$ according to $\hat w$, and apply Fact~\ref{fact:distances}(i) to $\hat w$. (If the two endpoints of an edge $e\in $ are at the same level, it is easy to see that $v_e$ is one level higher.) 

We can then suppose without loss of generality that  $G$ is bipartite.
Now the theorem follows by induction applying Lemma~\ref{lem:shortest}: indeed,  contract  $\delta(b)$, $b\ne x_0$ while possible, and set $b=x_0$ when this is the unique choice. Using the induction hypothesis, and then adding $\{b\}$ to $\Dscr$ of the contracted graph,   the theorem follows from Fact~\ref{fact:distances}(ii).
\endproof 

To show the unifying role of undirected distances we deduce Theorem~\ref{thm:Berge}. (In fact,  the proof can be completed to decuce the Edmonds-Gallai  structure theorem \cite{JCTB}.)

\smallskip
\noindent {\bf Another proof of Theorem~\ref{thm:Berge}}:
The number of missed vertices is clearly at least odd$(X)-|X|$ for all $X\subset V$. 

To prove that there exists $X\subset V$ for which the equality holds, let $M$ be a matching, and define $w(e)= -1$ if $e\in M$, $w(e)=1$ if $e\in E\setminus M$.  Then add a new vertex $x_0$ to the graph and $\{x_0v: \hbox{ for all } v\in V\}$, with $w(x_0v)=-1$ if $v$ is missed by $M$, and $1$ otherwise. A circuit  contains two consecutive negative edges if and only if the common endpoint of these is $x_0$ and the rest of the edges of the circuit form an augmenting alternating path.  Therefore, $w$ is conservative if and only if $M$ is a maximum matching.

Suppose now that $M$ is a maximum matching, i.e. $w$ is conservative, and let   $D^0\in\hat\Dscr^0$, $x_0\in D^0$, and $X:=\{x\in D^0\setminus \{x_0\} : \lambda(x)=0\}$.

It can be readily checked that $X$ is a barrier.   
\endproof

We finally deduce from Theorem~\ref{thm:shortest} Lovász's and Seymour's minimax theorems  giving a good characte\-ri\-sation of minimum factors with parity constraints:    

 \begin{corollary}\label{thm:Seymour} 
	If $G$ is bipartite $\tau(G,T)=\nu(G,T)$. 
\end{corollary}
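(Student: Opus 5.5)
Since every $T$-cut meets every $T$-join, $\tau(G,T)\ge\nu(G,T)$ holds trivially, so the task is to exhibit, for a bipartite $G$, $\tau(G,T)$ pairwise disjoint $T$-cuts. If $|T|$ is odd there is no $T$-join and the statement is read with the usual convention, so I assume $|T|$ even. The plan is to use the distance structure of Theorem~\ref{thm:shortest}.

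First fix a minimum $T$-join $F$ and set $w:=\mathbbm{1}[F]$, i.e. $w(e)=-1$ for $e\in F$ and $w(e)=1$ otherwise. By Guan's Fact, $w$ is conservative, and $E^-=F$. Choose $x_0\in T$ (if $T=\emptyset$ the statement is trivial) and let $\lambda$ be the distance function from $x_0$.

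Next, consider the family of sets $D\in\Dscr\cup\hat\Dscr$ with $x_0\notin D$. By Theorem~\ref{thm:shortest}, each such $D$ has $|\delta(D)\cap F|=1$. Since $d_F(v)\equiv 1$ exactly for $v\in T$, the parity of $|D\cap T|$ equals the parity of $|\delta(D)\cap F|$, which is $1$. Hence every such $\delta(D)$ is a $T$-cut containing exactly one edge of $F$.

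Because $G$ is bipartite, Fact~\ref{fact:distances}(i) gives $|\lambda(u)-\lambda(v)|=1$ on every edge. So no edge has both endpoints on the same level, and $\hat\Dscr^i=\Dscr^i$. The key observation is that every edge $uv$ with $\lambda(u)=i-1$ and $\lambda(v)=i$ lies in $\delta(D)$ only for the $D\in\Dscr^{i-1}$ containing $u$, and not for any set of another level. Indeed, for $j\ge i$ both endpoints lie in the same component of $G(V^j)$, and for $j<i-1$ neither endpoint is in $V^j$. Within one level the components are disjoint and no edge joins two of them, so each edge lies in at most one cut of the family; the cuts $\delta(D)$, $D\in\Dscr^i$, $x_0\notin D$, $m\le i\le M$, are therefore pairwise edge-disjoint.

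Finally I need to count: the number of these cuts should be $|F|$. Each such cut contains exactly one edge of $F$ and the cuts are disjoint, so it suffices to show every edge $f=uv\in F$ lies in some such cut. Take $\lambda(u)=i-1$, $\lambda(v)=i$ and $D\in\Dscr^{i-1}$ containing $u$. The edge $f$ is in $\delta(D)$, and it remains to see $x_0\notin D$. If $x_0\in D$, Theorem~\ref{thm:shortest} gives $|\delta(D)\cap F|=0$, contradicting $f\in\delta(D)\cap F$. So we get $|F|$ disjoint $T$-cuts, proving $\nu(G,T)\ge|F|=\tau(G,T)$.

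The main point requiring care is the disjointness claim. It relies on the bipartite level property $|\lambda(u)-\lambda(v)|=1$, which forbids edges inside a level and so makes each edge attributable to a unique level and a unique component.
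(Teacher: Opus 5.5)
Your proof is correct and follows the paper's argument. The paper takes a minimum $T$-join $F$ and sets $w=\underline 1[F]$. Bipartiteness gives $|\lambda(u)-\lambda(v)|=1$, so each edge of $F$ lies in $\delta(D)$ for exactly one $D\in\Dscr$, and Theorem~\ref{thm:shortest} then shows that the sets $D\in\Dscr$ with $x_0\notin D$ give $|F|$ disjoint cuts, each containing exactly one edge of $F$. You fill in details the paper calls ``clear'' (the parity check that each $\delta(D)$ is a $T$-cut, edge-disjointness, and $x_0\notin D_e$); like the paper, you tacitly assume $G$ is connected so that $\lambda$ is finite, which is harmless since components can be treated separately.
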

\prove Let $F$ be a minimum $T$-join, i.e.~$|F|=\tau$,  and $w:=\underline 1[F]$. From Fact~\ref{fact:distances}(ii) and bipartiteness,  $|\lambda(u)-\lambda(v)|= 1$, so for each negative edge $e$ there exists exactly one $D_e\in\Dscr$, $e\in\delta(D_e)$, and by  Fact~\ref{fact:distances}(iii), $e$ is the only negative edge of $\delta(D_e)$. Therefore $\{D\in\Dscr: x_0\notin D\}$ has $|F|=\tau$ elements, and they are clearly disjoint. \endproof

 \begin{corollary}\label{thm:Lovasz} 
$\tau(G,T)=\nu_2(G,T)/2$. 
\end{corollary}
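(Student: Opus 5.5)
The natural route is to reduce Corollary~\ref{thm:Lovasz} to Corollary~\ref{thm:Seymour} by subdividing every edge. The inequality $\tau(G,T)\ge\nu_2(G,T)/2$ was already observed: each $T$-cut meets every $T$-join, and a family of $T$-cuts covering each edge at most twice has at most $2|F|$ members when $F$ is a $T$-join. So only the reverse inequality $\nu_2(G,T)\ge 2\tau(G,T)$ needs proof.

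Let $\hat G=(\hat V,\hat E)$ be obtained by subdividing each edge $e=xy$ into $xv_e,v_ey$, as in the proof of Theorem~\ref{thm:shortest}, and keep the same $T\subseteq V\subseteq\hat V$. Then $\hat G$ is bipartite. A $T$-join of $\hat G$ must use both halves of a subdivided edge or neither, because $v_e\notin T$ has even degree in it. Hence $T$-joins of $\hat G$ correspond exactly to $T$-joins of $G$, with doubled size, and $\tau(\hat G,T)=2\tau(G,T)$. Corollary~\ref{thm:Seymour} applied to $\hat G$ then gives $2\tau(G,T)$ pairwise disjoint $T$-cuts $\delta_{\hat G}(\hat X_1),\dots,\delta_{\hat G}(\hat X_k)$ in $\hat G$, with $k=2\tau(G,T)$.

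Next I would map each of these cuts back to $G$ by setting $X_i:=\hat X_i\cap V$. Since $\hat X_i\cap T=X_i\cap T$, each $\delta_G(X_i)$ is again a $T$-cut. If an edge $e=xy$ lies in $\delta_G(X_i)$, then exactly one of $x,y$ is in $\hat X_i$. Whichever side $v_e$ falls on, exactly one of the half-edges $xv_e$, $v_ey$ lies in $\delta_{\hat G}(\hat X_i)$. The cuts in $\hat G$ are disjoint and $e$ has only two halves, so $e$ belongs to at most two of the cuts $\delta_G(X_i)$. This yields $2\tau(G,T)$ $T$-cuts of $G$ covering each edge at most twice, so $\nu_2(G,T)\ge 2\tau(G,T)$.

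The step needing most care is this projection of cuts back to $G$. It must hold even when some $\delta_G(X_i)$ turns out empty, or when two of the $X_i$ coincide; $\nu_2$ counts a family with multiplicity, so repeated cuts are harmless. An empty $\delta_G(X_i)$ is still a $T$-cut, since $|X_i\cap T|$ is odd, and it would only make the bound easier to meet. In fact, when $|T|$ is even, a cut with $|X\cap T|$ odd separates two vertices of $T$ in the same component, so it is nonempty, and the degenerate case does not arise. If $|T|$ is odd on some component, there is no $T$-join, $\tau=\infty$, and the statement should be read with that convention or for feasible $T$; I would state this assumption explicitly.
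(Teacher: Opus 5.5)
Your proof is correct and follows the paper's route: subdivide every edge, observe that $T$-joins of the bipartite graph $\hat G$ are exactly the doubled $T$-joins of $G$, and apply Corollary~\ref{thm:Seymour}. The paper phrases this through the weighting $\underline 1[F]$ and leaves implicit the projection $X_i:=\hat X_i\cap V$ of the disjoint cuts back to $G$; you carry that step out correctly, including the point that the resulting family must be counted with multiplicity.
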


\prove Let $F$ be a minimum $T$-join, i.e.~$|F|=\tau$,  and $w:=\underline 1[F]$, and then subdivide every edge into two edges having the original weight, with a new point, and apply Corollary~\ref*{thm:Seymour}.\endproof

Other applications of Theorem~\ref{thm:shortest} are surveyed in  \cite{Fra96}; sharpenings of theorems of Lovász and Seymour occur in  \cite{FST};  consequences of Theorem~\ref{fact:distances} for multicommodity flows or $T$-cuts  with the max-flow min cut property are shown in  \cite{thesis, FSz}. 
 
The  Edmonds-Gallai struture theorem \cite{LPL} is a direct special case of Theorem~\ref{thm:shortest}, which actually has the straightforward consequence of generalizing it to weighted matchings, and  also to weighted $T$-joins. Its simple statement is due to the unifying effect of the generalisation. The generalisation to $T$-joins (or equivalently, undirected distances) has a simplifying effect: different cases of  Lovász's  ``structure algorithm'' based ``upon the Gallai-Edmonds structure theorem'' \cite[9.3]{LPL} get unified. 

The sets $\Dscr$ and $\hat\Dscr$ can of course be computed in polynomial time as we saw about  distances. It is more interesting to note that the above proof can be turned to an algorithm: defining $\Dscr$ and $\hat\Dscr$ in each step of the algorithm for the current $T$-join, and keeping an $(x_0,x)$ path for every vertex $x\in V(G)$, as long as  both statements of Theorem~\ref{thm:shortest} are not satisfied,  either a negative cycle is detected, i.e.~the current $T$-join can be improved,  or some of the paths and therewith the level-sets can be improved  \cite{thesis,finding}. An optimal $T$-join can be found in this way in $O(n^4)$ time. 
 
\subsection{Adding bounds}\label{subsec:bounds}
 
Tutte \cite{Tutte_factor} originally assumed $|H(v)|=1$ for each $v\in V$. Lovász \cite[Section 10.2]{subgraphs,LPL}  considerably broadened this framework by allowing arbitrary intervals in $H$, developing min-max theorems for the deficiency, and characterizing the structure of $H$-factors. Then allowing parity constraints in  intervals for an arbitrary subset of vertices does not substantially complicate the results, proofs, or algorithms. The corresponding {\em classical factors} then admit simple, parity-based good characterisation, structural description and algorithms, and the goal of this section is to show general ideas that solve a large variant of classical factor problems.
 
Tutte’s factor theorem \cite{Tutte_factor} and  more generally, Lovász’s min–max and structure theorems \cite{subgraphs}, the related optimisation problems get a general algorithmic and polyhedral explanation   by  Edmonds and Johnson \cite{EJ}.
Even though this work of Edmonds and Johnson does not contain proofs, this is where bidirected graphs appear in the study of graph factors, and were mentioned to the author by Jack Edmonds in Bonn in 1985. 

We sketch here an elementary, combinatorial way of deducing the results on classical factors, working out the interesting properties of bidirected "reachability",  that leads to a ``structure algorithm'', not using blossoms, similar to the one presented for $T$-joins, and also containing Lovász's structure algorithm for matchings.    
``Minty's theorem'', see eg. Lovász \cite[Exercise 6.10]{exercises} generalizes to bidirected graphs as follows.

{\em Let $G=(V, A)$ be a digraph, and $s,t\in V$. Then either there exists a directed $(s,t)$ path, or an $(s,t)$-dicut, i.e. $S\subseteq V, s\in S, t\notin S$, with all edges of  $\delta(S)$ directed towards $S$; the set  of vertices reachable from $s$ by a directed path determines the unique inclusionwise minimal $(s,t)$-dicut. } \endproof

We now outline how to extend this fact to bidirected graphs and apply the  extension  to  classical factors. 
Let  $\alpha,\beta\in \{1, -1\}$ and  define 
\[ a^{\alpha}b^{\beta}(x) := 
\begin{cases} 
	\alpha & \text{if } x = a \\
	\beta & \text{if } x = b \\
	\alpha + \beta & \text{if } x = a = b \\
	0 & \text{otherwise.}
\end{cases} \]
where we delete ``1'' in the upper index, keeping only the sign. (Eg. $a^{-} b^{+}:=a^{-1} b^{+1}$.)  In a  {\em bidirected} graph $G=(V,E)$,  $E$ consists of edges of the form $a^\alpha b^\beta$. The {\em underlying graph} is obtained by ignoring the signs, and  the {\em components} of the bidirected graph are those of the underlying graph. If the sum of  a set of edges of a bidirected graph is $0$, then we call it a {\em cycle}; if it is   inclusionwise minimal   with this property, then it is a {\em circuit}. An $(a^\alpha, b^\beta)$ {\em path} is a set of edges whose sum is  $a^\alpha b^\beta$; an $(a, b)$ {\em path} is an $(a^\alpha, b^\beta)$-path for some  $\alpha,\beta\in \{1, -1\}$. If such a path exists, we  say that $b^\beta$ or simply $b$ is {\em reachable} from $a^\alpha$ or from $a$. Fix  $x_0\in V(G)$ and define 

\smallskip
\centerline{$V^+:=V_{x_0^-}^+(G):=\{ x\in V(G): x^+\hbox { is reachable from } x_0^-$ 
\},  }

\smallskip
\centerline{$V^-:=V_{x_0^-}^-(G):=\{ x\in V(G): x^-\hbox { is reachable from } x_0^-
\}.$}
Reachability and non-reachability are characterized by these sets in a slightly more involved way than in digraphs. It is convenient to assume that $\emptyset$ is an $x_0^-x_0^+$ path, so that $x_0\in V^+$. Alternatively, one may add the loop $x_0^-x_0^+$, which does not change the set of reachable vertices; thus the assumption $x_0\in V^+$  holds throughout. We say that an edge $x^-y$ is {\em useful} at $x$, if $x\in V^+$, and  $x^+y$, if $x\in V^-$. An edge is {\em useful} if it is useful at both of its endpoints.

If $P$ is an $(a,b)$ path, then {\em  there exists a natural ordering of the edges of $P$ such that any consecutive subsequence is again a path.} Indeed, if $P$ is an $(a^\alpha,b^\beta)$ path, then it contains an edge $e=pb^\beta$, and the claim follows by induction on the number of edges, using the induction hypothesis for the path $P\setminus{e}$. It follows that every vertex of $P$ is reachable from both $a^\alpha$ and $b^\beta$.
\begin{figure}[h]
	\centering
	\includegraphics[scale=0.6]{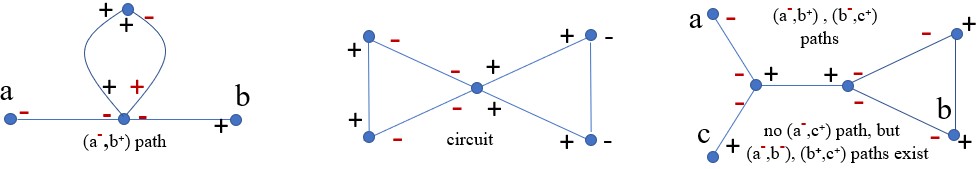}
	\caption{left: path; middle: circuit; right: failure of transitivity but rescue by Lemma~\ref{lem:trans} }
	\label{fig:bidirected}
\end{figure}
Examples of bidirected paths and circuits are shown in Figure~\ref{fig:bidirected}. Reachability in bidirected graphs is equivalent to alternating walks in two-edge-coloured graphs\footnote{Let $a^-b^-$ be red, $a^+b^+$ be blue, and $e=a^-b^+$ be replaced by $a^-v_e^-$ and $v_e^+b^+$; reachability by (possibly shortest) alternating walks (by paths as well) is polynomially solvable similarly to shortest odd paths \cite{SCHRIJVERyellow}.},  and turns out to provide the improving paths for graph factors. The bidirected notation has a simplifying role. The following weaker property replaces transitivity (see Figure~\ref{fig:bidirected}) and plays a key role.	

\vspace{-0.22cm}
\begin{lemma}\label{lem:trans}\cite{thesis} If there exist both   $(a^\alpha,b^\beta)$ and $(b^{-\beta},c^\gamma)$ paths, then in addition there exist either an $(a^\alpha,c^\gamma)$ path or  both  $(a^\alpha, b^{-\beta})$ and  $(b^\beta, c^\gamma)$ paths  (Figure~\ref{fig:bidirected}).
\end{lemma}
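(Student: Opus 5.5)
Let $P$ be an $(a^\alpha,b^\beta)$ path and $Q$ a $(b^{-\beta},c^\gamma)$ path. Then $P\cup Q$, counted with multiplicity, sums to $a^\alpha c^\gamma$, since the contributions $\beta$ and $-\beta$ at $b$ cancel. If $P$ and $Q$ are edge-disjoint, $P\cup Q$ is an edge set with sum $a^\alpha c^\gamma$, so it is an $(a^\alpha,c^\gamma)$ path and we are done. Paths are defined as edge sets with a prescribed sum, not as simple sequences, so no minimality is required.

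In general I would use the natural ordering of $P$ established above, in which every consecutive subsequence is again a path. Traverse $Q$ from its end $c^\gamma$ towards $b^{-\beta}$, that is, order $Q=(f_1,\dots,f_m)$ so that $f_1$ is incident to $c$ with sign $\gamma$ and every final segment $\{f_j,\dots,f_m\}$ is a path ending at $c^\gamma$. Choose the first index $j$, scanning from the $b^{-\beta}$ end, such that $Q_j:=\{f_j,\dots,f_m\}$ is edge-disjoint from $P$ except at its first edge, or such that its starting endpoint $x^{\epsilon}$ lies on $P$. The idea is to pick a vertex $x$ on $P$ and a sign $\epsilon$ such that $Q$ contains an $(x^{\epsilon},c^\gamma)$ subpath $Q'$ edge-disjoint from $P$. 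Because every vertex of $P$ is reachable from both ends, $P$ splits at $x$ into an $(a^\alpha,x^{\sigma})$ part $P_1$ and an $(x^{-\sigma},b^\beta)$ part $P_2$, using the consecutive-subsequence property.

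There are two cases. If $\epsilon=-\sigma$, then $P_1\cup Q'$ is an $(a^\alpha,c^\gamma)$ path, and the first alternative holds. If $\epsilon=\sigma$, then $P_2$ reversed together with $Q'$ gives a $(b^\beta,c^\gamma)$ path. We still need an $(a^\alpha,b^{-\beta})$ path, and I would look for it by combining $P_1$ with the part of $Q$ from $b^{-\beta}$ to $x$, which is an $(b^{-\beta},x^{-\epsilon})=(b^{-\beta},x^{-\sigma})$ path. Together with $P_1$, which ends at $x^{\sigma}$, this gives sum $a^\alpha b^{-\beta}$. This is the rescue pictured in Figure~\ref{fig:bidirected}.

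The main obstacle is choosing $x$ so that the pieces glued in each case are edge-disjoint. The choice must be the first point where $Q$, read from $c$, meets $P$, measured in the ordering of $Q$ by edges rather than vertices, since loops and repeated vertices occur. Even so, the initial piece of $Q$ from $b^{-\beta}$ may share edges with $P_1$. To handle this I would argue by induction on $|P|+|Q|$. If the piece overlaps $P_1$, I would replace $P$ by $P_1$, or $b$ by a closer vertex, and apply the induction hypothesis to the shorter configuration. The conclusions for the shorter configuration transfer back because subpaths of paths are paths.
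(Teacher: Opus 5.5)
Your one-sided argument is sound. Let $x$ be the first vertex of $P$ met when $Q$ is read from $c^\gamma$. Every edge of the final segment $Q'$ then has an endpoint off $P$, so $Q'$ is automatically edge-disjoint from $P$. No special care for loops is needed, and this choice can replace the index-based rule you wrote, which mixes up the two ends of $Q$. You then correctly obtain either an $(a^\alpha,c^\gamma)$ path $P_1\cup Q'$ or a $(b^\beta,c^\gamma)$ path $P_2\cup Q'$.

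The gap is the $(a^\alpha,b^{-\beta})$ path in the second case. $P_1$ and $Q''=Q\setminus Q'$ may share edges, and the induction you sketch does not resolve this. The inductive call on $P_1$ and $Q''$ may return its \emph{second} alternative: an $(a^\alpha,x^{-\sigma})$ path together with an $(x^{\sigma},b^{-\beta})$ path. ``Subpaths of paths are paths'' does not turn that into an $(a^\alpha,b^{-\beta})$ path. To use that outcome you would have to strengthen the hypothesis to paths contained in $P\cup Q$; then the $(a^\alpha,x^{-\sigma})$ path is edge-disjoint from $Q'$ and yields an $(a^\alpha,c^\gamma)$ path. Even then a degenerate case remains: if $c^\gamma=b^{-\beta}$ and $b$ occurs on $P$ only as its end, your choice gives $x=b$ and $Q'=P_2=\emptyset$. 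So $|P_1|+|Q''|=|P|+|Q|$ and there is nothing to induct on.

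The missing idea is that the two paths of the second alternative need not come from the same meeting vertex, so no induction is needed. The paper's proof is your argument run from the other end. Let $q$ be the first vertex of $Q$ met along $P$ starting from $a^\alpha$. Then $P(a^\alpha,q)$ is edge-disjoint from $Q$. Of the two parts of $Q$ at $q$, exactly one carries the sign at $q$ opposite to that of $P(a^\alpha,q)$, and it glues onto $P(a^\alpha,q)$. This gives an $(a^\alpha,c^\gamma)$ or an $(a^\alpha,b^{-\beta})$ path. Your argument, which is the same one applied to the reversed pair, gives an $(a^\alpha,c^\gamma)$ or a $(b^\beta,c^\gamma)$ path. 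So if no $(a^\alpha,c^\gamma)$ path exists, both required paths do. The paper writes out only the $a$-side; your $c$-side is exactly its mirror image and supplies the $(b^\beta,c^\gamma)$ path.
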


\prove Let $P$ be an $(a^\alpha,b^\beta)$ path, and $Q$ a $(b^{-\beta},c^\gamma)$ path. Starting from  $a^\alpha$ along $P$ let  $q$  be  the first vertex of $Q$. Exactly one of $P(a^\alpha ,q)\cup Q(q,c^\gamma )$ and  $P(a^\alpha ,q)\cup Q(q,b^{-\beta })$ is a path.\endproof

\begin{theorem}\label{thm:reachability}\cite{thesis}
	Let $G=(V,E)$ a bidirected graph, $x_0\in V$, and $V^+, V^-$ as above.  Then:
	
\vspace{-0.2cm}
\begin{itemize}
	\item [{\rm(i)}] No edge induced by $V^+\Delta V^-$ is useful.
	\vspace{-0.1cm}
	\item [{\rm(ii)}]	Let $C$ be a component of $G-(V^+\triangle V^-)$. Then 
	
	-- either $C\cap (V^+\cup V^-)=\emptyset$, and there is no edge in $\delta(C)$ useful in $V^+\cup V^-$,
	
	-- or $C\subseteq V^+\cap V^-$,  and then 
	
	\qquad if $x_0\notin C$, exactly one edge of $\delta(C)$ is useful;  
	
     \qquad if $x_0\in C$,no edge of $\delta(C)$ is useful. \vspace{-0.2cm}
\end{itemize}
\end{theorem}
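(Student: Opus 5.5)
We propose to argue entirely through the path-extension mechanism and Lemma~\ref{lem:trans}, mimicking the proof of Minty's theorem for digraphs. The basic observation is this. If $x^{\sigma}$ is reachable from $x_0^-$ by a path $P$ and $e=x^{-\sigma}y^{\tau}$ is an edge not in $P$, then $P\cup\{e\}$ sums to $x_0^-y^{\tau}$. Either it is a path, or it contains a cycle through $x_0$ or $y$, and the ordering property (consecutive subsequences of paths are paths) lets us shortcut. Combined with Lemma~\ref{lem:trans}, this should yield the statement we use repeatedly:
\begin{center}
if $x\in V^{\sigma}$ and $x^{-\sigma}y^{\tau}\in E$, then $y\in V^{-\tau}$ or $x\in V^{-\sigma}$ (so $x\in V^+\cap V^-$).
\end{center}
Here $y\in V^{-\tau}$ is meant in the sense that $y^{\tau}$ is reachable, matching the definition of ``useful'' (an edge $x^-y$ is useful at $x$ exactly when it extends a path ending in $x^+$). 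We will set up the bookkeeping of signs once, carefully, so that ``useful at $x$'' means ``extends some path arriving at $x$''.

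For (i), take $x,y\in V^+\Delta V^-$ and suppose $e=xy$ is useful at both ends. Say $x\in V^{\sigma}\setminus V^{-\sigma}$ and $y\in V^{\rho}\setminus V^{-\rho}$. Usefulness forces $e=x^{-\sigma}y^{-\rho}$. Extending an $(x_0^-,x^{\sigma})$-path $P$ by $e$ gives $y^{-\rho}$ reachable unless $y$ already lies on $P$. In that case the initial segment of $P$ up to $y$ shows $y$ reachable with some sign, and appending $e$ then gives $x$ reachable with the opposite sign $-\sigma$. Lemma~\ref{lem:trans} is exactly what settles which alternative occurs. Either outcome puts $x$ or $y$ into $V^+\cap V^-$, a contradiction.

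For (ii), let $C$ be a component of $G-(V^+\Delta V^-)$; its vertices lie in $V^+\cap V^-$ or outside $V^+\cup V^-$. First we show these two kinds cannot be adjacent inside $C$. If $x\in V^+\cap V^-$ and $xy\in E$, then $x$ is reachable with both signs, and extending by $xy$ makes $y$ reachable. Again Lemma~\ref{lem:trans} handles the case that $y$ already lies on the path, where we use the path arriving at $x$ with the appropriate sign. Hence either $C\cap(V^+\cup V^-)=\emptyset$ or $C\subseteq V^+\cap V^-$. In the first case, an edge of $\delta(C)$ useful at its endpoint in $V^+\cup V^-$ would extend a path into $C$, making a vertex of $C$ reachable, which is a contradiction. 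The shortcut case again does not arise, since the other endpoint is unreachable.

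The main obstacle is the second case, $C\subseteq V^+\cap V^-$: we must count useful edges of $\delta(C)$, the analogue of Theorem~\ref{thm:shortest}, where blossoms are entered by a unique negative edge. The plan is as follows. Choose a path $P$ from $x_0^-$ reaching some vertex of $C$ with $|P|$ minimal over all vertices and signs of $C$. If $x_0\notin C$, the last edge of $P$ enters $C$ through an edge $f\in\delta(C)$ useful at both ends. Its outer endpoint lies in $V^+\Delta V^-$ or in another such component, and is reached by a prefix of $P$, which gives usefulness there. Next suppose a second edge $g\neq f$ of $\delta(C)$ were useful. Every vertex of $C$ is reachable with both signs. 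Using connectivity of $C$ and the ordering property, we route inside $C$ from the endpoint of $f$ to the endpoint of $g$ and exit through $g$. We then apply Lemma~\ref{lem:trans} to the outer endpoint of $g$. This either produces a path reaching that endpoint with the sign opposite to the one it has, contradicting that it lies in $V^+\Delta V^-$ or violating the minimality of $P$, or it forces the outer endpoint into $C$. This step is where we expect the argument to be delicate. If it stalls, we would first try induction on $|E|$ by contracting $C$ to a single vertex (mirroring Lemma~\ref{lem:shortest}) and checking that the reachable sets outside $C$ are unchanged. If $x_0\in C$, the same argument with $P=\emptyset$ shows that no edge of $\delta(C)$ can be useful.
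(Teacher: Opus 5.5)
Your arguments for (i) and for the dichotomy in (ii) are sound. Your one-edge extension is in fact more direct than the paper's route to $C\subseteq V^+\cap V^-$: a neighbour inside $C$ of a vertex of $V^+\cap V^-$ is reachable, hence lies in $V^+\cap V^-$ because it is not in $V^+\Delta V^-$. The paper obtains this inclusion only as a by-product of its Claim. That step needs only the prefix property, not Lemma~\ref{lem:trans}. Also, appending $x^{-\sigma}y^{\tau}$ to a path ending at $x^{\sigma}$ puts $y$ in $V^{\tau}$ in the paper's notation, not $V^{-\tau}$.

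The genuine gap is the uniqueness of the useful edge of $\delta(C)$, exactly at ``every vertex of $C$ is reachable with both signs \ldots\ we route inside $C$ from the endpoint of $f$ to the endpoint of $g$''. Knowing $C\subseteq V^+\cap V^-$ only supplies paths from $x_0^-$, and these may leave and re-enter $C$, possibly through $g$ itself. To glue the entry path $R$ (which meets $C$ only in its last edge $e=p^-r^+$) to a route that exits through $g=p'^{\alpha}r'^{\beta}$, you need an $(r^-,r'^{-\beta})$ path using only edges induced by $C$. Connectivity of the underlying graph of $C$ does not give this. Consecutive edges must cancel in sign at their common vertex, so an internal route from $r^-$ reaches $r'$ with \emph{some} sign, not necessarily the required one.

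Your proposed alternatives do not supply it either. The outer endpoint of an edge of $\delta(C)$ always lies in $V^+\Delta V^-$, since $C$ is a component of $G-(V^+\Delta V^-)$, so it can never be ``forced into $C$''. Minimality of $|P|$ gives nothing beyond what the first-entry prefix $R$ already gives. The contraction fallback presupposes the same property, since contracting $C$ preserves reachability only if entering at $r^+$ lets you leave anywhere with either sign.

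The missing idea is the paper's Claim. Let $C^{+}_{r^-}$, $C^{-}_{r^-}$ be the sets of $v\in C$ such that $v^+$, respectively $v^-$, is reachable from $r^-$ using edges induced by $C$; then $C^+_{r^-}=C^-_{r^-}=C$. To prove $C^+_{r^-}\subseteq C^-_{r^-}$, proceed as follows.
\begin{itemize}
\item Take an internal $(r^-,v^+)$ path $P$. Then $R\cup P$ gives $v\in V^+$, hence $v\in V^-$.
\item Let $f=q^{\alpha}s^{\beta}$ be the last edge of $\delta(C)$ on an $(x_0^-,v^-)$ path, so $q\in V^{-\alpha}\setminus V^{\alpha}$. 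Let $Q$ be its internal $(s^{-\beta},v^-)$ tail; if $f=e$, $Q$ is already what you want.
\item Apply Lemma~\ref{lem:trans} to $P$ and $Q$. It yields either an internal $(r^-,s^{-\beta})$ path, which is impossible because together with $R$ and $f$ it would reach $q^{\alpha}$, or an internal $(r^-,v^-)$ path.
\end{itemize}
The reverse inclusion is symmetric. Since $r\in C^+_{r^-}$ via the empty path, connectivity of $C$ forces equality with $C$.

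With this in hand your exit-through-$g$ argument works. $R$, the internal $(r^-,r'^{-\beta})$ path and $g\neq e$ are pairwise edge-disjoint, and their sum reaches $p'^{\alpha}$, contradicting $p'\in V^{-\alpha}\setminus V^{\alpha}$. The same reasoning with $R=\emptyset$ and $r=x_0$ covers the case $x_0\in C$. The paper instead attaches a new root $z$ with the single edge $z^-x_0^+$, which becomes the unique useful edge of $\delta(C)$.
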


\noindent{\bf Proof}:(Sketch) (i) is easy. Indeed, a useful edge  $uv$ induced by $V^+\Delta V^-$ appended or deleted from a path would contradict the definition of  $V^+$ or $V^-$ . 

Assertion (ii) is straightforward for  $C$ with $C\cap (V^+\cup V^-)=\emptyset$. If $C\cap (V^+\cup V^-)\ne\emptyset$, then $\emptyset\ne C\cap (V^+\cup V^-)=C\cap (V^+\cap V^-)$, since $C$ is a component after deleting $V^+\Delta V^-$. 

Assume therefore $C\cap V^+\cap V^-\ne\emptyset$, and prove $C\subseteq V^+\cap V^-$ together with the remaining claims; we can assume $x_0\notin C$, otherwise  rename $x_0$ to $r$, introduce a new vertex $x_0$, an edge $x_0^- r^+$, and then  apply the  argument to the modified graph. 

Let $x\in C\cap V^+\cap V^-$ and $e\in \delta(C)$ the first   edge entering $C$ of an $(x_0^-,x)$ path; $e$ is useful,  $e=p^-r^+$ say, $p\in V^+\setminus V^-$,   $r\in C$,  and let $R$ be the  $(x_0,r^+)$ part of the path, ending with $e$.

\noindent{\bf Claim}: $C_{r^-}^+ \subseteq C_{r^-}^-$. 

Indeed, let  $v\in C_{r^-}^+$, and  $P$ be the $(r^-,v^+)$ part inside $C$ of this path.
Since $v\in C\subseteq V^+\cap V^-$, there also exists an $(x_0^-,v^-)$ path, whose last edge is  $f=q^\alpha s^\beta$, $q\in V^{-\alpha}\setminus V^{\alpha}$, $s\in C$, $\alpha,\beta\in\{1,-1\}$.  Denote the  last $(s^{-\beta}, v^-)$ part in $C$ of this path by $Q$.

Applying Lemma~\ref{lem:trans}   to $P$ and $Q$ yields the claim. Indeed,  no path $(r^-,s^{-\beta})$ exists,  since together with $R$ and $f$ it would produce an $(x_0,q^\alpha)$ path, contradicting $q\in V^{-\alpha}\setminus V^{\alpha}$. Hence by Lemma~\ref{lem:trans} there exists an $(r^-,v^-)$ path, and the claim is proved.

\smallskip
By symmetry, $C_{r^-}^- \subseteq C_{r^-}^+$, so 
$C_{r^-}^+ = C_{r^-}^-$. Since the underlying graph is connected, $C_{r^-}^+ = C_{r^-}^-=C$, otherwise there exists an edge of $\delta (C_{r^-}^+)$ whose other endpoint is  in $C\subseteq V^+\cap V^-\setminus C_{r^-}^+$, contradicting the definition either of $C_{r^-}^+$ or of $C_{r^-}^-$.

Finally, knowing that $C\subseteq V^+\cap V^-$,  another useful edge $e'=p'r'\in\delta(C)$, $p'\in V^+\Delta V^-$,  $e'\ne e$ could be added to the union of $R$ and an $(r^-,-r')$ path, producing a path ending in $V^+\Delta V^-$ with  the wrong sign. This contradiction completes the proof.
\endproof

We can call $X^+, X^-\subseteq V$, $x_0\in X^+$, satisfying (i) (ii) when substituted  for  $V^+, V^-$ a {\em bidirected cut} separating $x_0^-$ from some of the vertices with some signs. Indeed,
it is easy to see that for
 $u\in V\setminus X^+$ and $v\in V\setminus X^-$, neither $u^+$ nor $v^-$ is then reachable from $x_0^-$  \cite[B.Tétel, page 77]{thesis}. Together with Theorem~\ref{thm:reachability} this extends Minty's theorem 
 to bidirected graphs.
 
  The proof can be turned to an algorithm of complexity $O(n^4)$  by keeping paths certifying membership in $V^+$ and $V^-$ and growing these with new paths until (i) and (ii) are  both satified, analogously with the algorithmic proof of Theorem~\ref{lem:shortest}. Again, this specializes to   Lovász's algorithm based ``upon the Gallai-Edmonds structure theorem'' \cite[9.3]{LPL}, and can be widely applied to degree-constrained subgraph or orientation problems by various definitions of a bidirected graph, implying Edmonds-Gallai type structure theorems and algorithms.
  
  We illustrate this method on  classical factor problems defined by the triple  $(l,u,T)$, and state the lemma that provides all the improving paths with respect to a current $F\subseteq E$: 

\[
\begin{aligned}
	E(\overleftrightarrow{G_F}) =\;& 
	\{\,x^-y^-:  xy \in F \,\}
	\;\cup\;
	\{\, x^+ y^+ : xy \in E(G)\setminus F \,\}
	\\[4pt]
	&\cup\; 
	\{\, x_0^- x^- : d_F(x) < l(x) \,\}
	\;\cup\;
	\{\, x_0^- x^+ : d_F(x) > u(x) \,\}
	\\[4pt]
	&\cup\;
	\bigcup_{x \in T,\; l(x)<d_F(x)<u(x),\; d_F(x)\not\equiv u(x)\!\!\pmod{2}}
	\{\, x_0^- x^+,\, x_0^- x^- \,\}
	\\[4pt]
	&\cup\;
	\{\, x^+ x^+ : x\in T,\; l(x)<d_F(x)\le u(x),\; d_F(x)\equiv u(x)\!\!\pmod{2} \,\}
	\\[4pt]
	&\cup\;
	\{\, x^- x^- : x\in T,\; l(x)\le d_F(x)<u(x),\; d_F(x)\equiv u(x)\!\!\pmod{2} \,\}.
\end{aligned}
\]

\begin{lemma}\label{lem:improving}\cite{thesis}
$F$ is not optimal,  if and only if at least one of the following hold:  

{\rm (i)} $x_0\in V^+\cap V^-$,

{\rm (ii)} there exists $x\in V^+ \setminus \Pi$,

{\rm (iii)} there exists $x\in V^- \setminus \Pi$, such that $d_F(x)> l(x)$.
\end{lemma}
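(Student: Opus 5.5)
The plan is to read $\overleftrightarrow{G_F}$ as an exchange graph: bidirected paths starting at $x_0^-$ should be exactly the improving modifications of $F$. (The statement writes $\Pi$ where the construction writes $T$; I take them to be the same set of parity vertices.)

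\textbf{Sign convention.} For an edge set $P$ of $\overleftrightarrow{G_F}$, let $P_E$ be its edges coming from $E$, and set $F':=F\Delta P_E$.
\begin{itemize}
\item Deleting $xy\in F$ contributes $x^-y^-$.
\item Adding $xy\notin F$ contributes $x^+y^+$.
\end{itemize}
So $d_{F'}-d_F$ is exactly the vector sum of the edges of $P_E$. The edges through $x_0$ and the loops are artificial; each encodes how $d_F(x)$ may change at $x$ without increasing $\mu(d_F(x),H(x))$.
\begin{itemize}
\item $x_0^-x^-$ with $d_F(x)<l(x)$ forces the real edges at $x$ to sum to $+1$, which is a unit of progress.
\item $x_0^-x^+$ with $d_F(x)>u(x)$ is symmetric.
\item A loop $x^+x^+$ at a parity vertex allows the real edges at $x$ to sum to $-2$ while staying in $H(x)$.
\end{itemize}

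\textbf{The ``if'' direction.} In each of the three cases I would take the certifying path $P$ from $x_0^-$ and verify that $F'$ has smaller deficiency, by inspecting only the endpoints of $P$. Internal vertices have net real change $0$ or $\pm2$ through a parity loop, so they stay in $H$. By the ordering property of bidirected paths stated before Lemma~\ref{lem:trans}, $P$ is a genuine edge set and $F\Delta P_E$ is well defined.
\begin{itemize}
\item Case (i): $x_0\in V^+\cap V^-$ gives a path summing to $x_0^-x_0^-$. It therefore uses at $x_0$ two artificial edges whose far ends both gain one unit toward $H$, or one vertex where both artificial edges occur. Either way $\mu$ drops by at least $1$.
\item Case (ii): $x\in V^+\setminus\Pi$ is an interval vertex, and $P$ ends with the real edges at $x$ summing to $+1$.
\item Case (iii): $x\in V^-\setminus\Pi$, and the real change at $x$ is $-1$.
\item In cases (ii) and (iii) the start of $P$ gains one unit, and since $H(x)$ is an interval, the extra condition ($d_F(x)>l(x)$ in (iii), and the analogous one in (ii)) ensures the change at $x$ loses nothing, or gains. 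So the net gain is positive.
\end{itemize}
This part is routine bookkeeping.

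\textbf{The ``only if'' direction, via a cut certificate.} Rather than decompose $F\Delta F^*$ for an optimal $F^*$, I would assume none of (i)--(iii) holds and apply Theorem~\ref{thm:reachability} to $\overleftrightarrow{G_F}$. Define
\[L:=V^-\setminus V^+,\qquad U:=V^+\setminus V^-.\]
(The sign assignment may need swapping; the choice is dictated by which sign forces a vertex to be saturated.) Then compare $\mu(F,H)$ with the lower bound established in the easy direction of Theorem~\ref{thm:factor}, namely $l(L)+d_G(L,U)+\omega(L,U)-u(U)-\sum_{x\in L}d_G(x)$. The goal is to show equality for this $(L,U)$, which proves $F$ optimal.

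The equality check uses the following facts from the failure of (i)--(iii) and Theorem~\ref{thm:reachability}.
\begin{itemize}
\item Because no vertex of $V^+\cup V^-$ outside $\Pi$ is improvable, every vertex of $U$ has $d_F=u$ unless it is a deficient endpoint, and similarly on $L$ with $l$.
\item By part (i) of Theorem~\ref{thm:reachability}, no edge inside $U\cup L$ is useful. So edges in $E(U)$ are not in $F$, edges in $E(L)$ are in $F$, and $\delta(L,U)$ is oriented consistently. This gives the tight terms $|E(U)\cap F|=|E(L)\setminus F|=0$ and $d_F(U,L)+d_{E\setminus F}(L,U)=d_G(L,U)$.
\item By part (ii), each component $C\subseteq V^+\cap V^-$ of $G-(L\cup U)$ not containing $x_0$ has exactly one useful edge in its boundary. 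So it contributes exactly one edge in the inequality, and it is $(L,U)$-odd: all its vertices lie in $\Pi$ by (ii)--(iii), and the parities are forced by the loops being absent or present.
\item Components disjoint from $V^+\cup V^-$ contribute nothing and are even.
\item The component containing $x_0$ carries all the deficiency-creating artificial edges, and accounts for $\mu(F,H)$ exactly, because (i) fails.
\end{itemize}

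\textbf{Expected obstacle.} The main difficulty is this last accounting: matching each artificial edge and each loop type in the definition of $\overleftrightarrow{G_F}$ to the correct slack term, and checking that the parity of a component $C\subseteq V^+\cap V^-$ is exactly the $(L,U)$-odd condition. I would first settle the pure parity case ($l=0$, $u=d$, as for $T$-joins), where it should reduce to the argument of Theorem~\ref{thm:shortest}, and then add the bounds vertex by vertex.
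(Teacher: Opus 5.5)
Your ``if'' direction matches the paper's: the certifying paths are improving paths. For ``only if'' you take a different route. The paper takes an $F'$ with $\mu(F',H)<\mu(F,H)$ and extracts a path of type (i)--(iii) from $F\triangle F'$. You instead apply Theorem~\ref{thm:reachability} to $\overleftrightarrow{G_F}$ and exhibit a tight instance of the easy bound of Theorem~\ref{thm:factor}. That is the argument the paper runs afterwards to prove Theorem~\ref{thm:factor}. It is not circular, and it would produce the certificate at the same time.

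Your choice $U=V^+\setminus V^-$, $L=V^-\setminus V^+$ is the right one, once you remove $x_0$ from $U$. By contrast, the sets $V^\pm\setminus\Pi$ written in the paper's proof of Theorem~\ref{thm:factor} are empty for perfect matchings, where $\Pi=V$.

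\textbf{The equality is not proved, and the sketched mechanism is wrong.} If (i) fails, then $x_0\in V^+\setminus V^-$ is itself deleted. The case ``$x_0\in C$'' of Theorem~\ref{thm:reachability} is exactly the case where (i) holds, so no component carries the deficiency. The deficiency must instead be charged vertex by vertex:
\begin{itemize}
\item Each deficient $v$ is joined to $x_0$ by an artificial edge, so $v\in V^+\cup V^-$.
\item If $v\in V^+\triangle V^-$, its deficiency is the slack $d_F(v)-u(v)$ or $l(v)-d_F(v)$ in the $U$- or $L$-term.
\item If $v\in V^+\cap V^-$, that artificial edge is the unique useful edge of $\delta(C)$ for its component $C$. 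Then $C$ sends no $F$-edge to $U$ and no $(E\setminus F)$-edge to $L$, and it contains no other deficient vertex. Since $C$ is counted at most once in $\omega(L,U)$, you need $\mu(d_F(v),H(v))=1$.
\end{itemize}

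The loops also enter somewhere other than where you put them. They are what forces a parity vertex of $V^+\setminus V^-$ to have $d_F\ge u$. Otherwise it carries $x^-x^-$ or $x_0^-x^-$ and so lies in $V^-$; the failure of (ii)--(iii) does not give you this, and the same holds symmetrically for $L$. The oddness of a component $C\subseteq V^+\cap V^-$ comes instead from $\sum_{c\in C}d_F(c)\equiv d_F(C,L)+d_F(C,U)\pmod 2$, combined with the unique useful edge of $\delta(C)$.

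\textbf{The construction as displayed breaks both directions.} The requirement $\mu(d_F(v),H(v))=1$ can fail, and so can one of your ``if'' claims.

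First, suppose $x\in\Pi$, $l(x)<d_F(x)<u(x)$ and $d_F(x)\not\equiv u(x)$. Then $x_0^-x^+$ and $x_0^-x^-$ alone sum to $x_0^-x_0^-$, so (i) holds with $F'=F$. Your claim that a path using both artificial edges at one vertex lowers $\mu$ is therefore false: the real change at $x$ is even. Such an $F$ can be optimal, for example the path $axb$ with $H(a)=\{1\}$, $H(x)=\{0,2\}$, $H(b)=\{0\}$ and $F=\{ax\}$.

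Second, let $v,w$ be joined by two parallel edges, with $H(v)=\{0\}$, $H(w)=\{0,2\}$ and $F=E$. The edges $x_0^-v^+$, $w^+w^+$ and both copies of $v^-w^-$ sum to $x_0^-v^-$, yet $x_0\notin V^-$ and $V\setminus\Pi=\emptyset$. So none of (i)--(iii) holds, although $\emptyset$ is a factor. Your certificate gives $0<2=\mu(F,H)$.

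The auxiliary graph therefore has to be repaired before either direction can be proved. One way is to give the artificial edges at $x$ multiplicity $\mu(d_F(x),H(x))$, and to forbid a path from using both artificial edges of a wrong-parity vertex. Condition (ii) also needs $d_F(x)<u(x)$, as you noticed. The paper's two-line sketch passes over these cases too, but your proof has to state which version it establishes.
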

\prove (Sketch) Conditions (i)-(iii) provide {\em improving paths} i.e. paths decreasing the deficiency; conversely, if $F'$ has smaller deficiency than $F$, then $F\Delta F'$ contains one of the paths of (i)-(iii).\endproof
Edmonds' blossoms can be adapted to find these  improving paths in $O(n^2)$ time.

\smallskip
\noindent{\bf Proof of Theorem~\ref{thm:factor}}:  The  ``easy direction'' was verified immediately after the statement. For the converse, assume that none of  (i)-(iii) holds, i.e.:

(i)  $x_0\in V^+$,\qquad (ii) $V^+\setminus \Pi\subseteq \{x:d_F(x)\ge u(x)\}$,\qquad  (iii) $V^-\setminus \Pi\subseteq \{x:d_F(x)\le l(x)\}$. 

With the choice  $L:=V^+\setminus \Pi$, $U:=V^-\setminus \Pi$  the equality holds in all  lower bounds of the easy part of the proof, so the violation of the condition by $F$ is equal to    the difference of the left and right hand sides of Theorem~\ref{thm:factor}. If $F$ is not an $(l,u,\Pi)$-factor, this difference is positive, finishing the proof.  A  min-max theorem for the deficiency + a structure theorem  can also be read out. 
 \endproof

Orientations subject to upper, lower bounds and possibly parity constraints on say the indegrees, can be treated in a way  similar to those for graph factors, and the results are also similar \cite{FST}. Given a non-empty set $H\subseteq\mathbb{Z}_+$, an orientation of $G$ will be said to be {\em $H$-respecting}, if the indegree of each $v\in V$ lies in $H(v)$. Here is an explanation for the similarities.

Theorem~\ref*{thm:reachability} applies to $H$-respecting orientations in a similar way to the above proof of Theorem~\ref{thm:factor}, including both the optimisation (minimisation of the deficiency (distance from $H$), and algorithmically, also yielding the corresponding Edmonds-Gallai theorems. (For classical sponges this is worked out   in  \cite{thesis}.) The extension from subgraphs to orientations follows in fact from a simple reduction: 
subvivide each $e\in E$ by  a new vertex $v_e$, set $\hat H(v_e):=\{1\}$, and keep $H(v)$ unchanged for the original vertices. The new vertices are classical. 
There is a natural bijection between $\hat H$-factors $M$ of the subdivided graph and  $H$-respecting orientations of the original  graph (also preserving deficiency): orient $e=xy\in E$ towards $y$ if  and only if
$v_ey\in M$.
Finally, this framework also applies to mixed graphs, in which edges may be treated as factor edges, as oriented edges, or left free to be assigned either role. The corresponding results are not stated separately, as they follow  automatically from the corresponding factor theorems; the reader interested in orientations can bear this in mind throughout the next section.

\section{Jump systems intersecting sponges}\label{sec:jump} 

It remains to put the  icing on the cake: the missing and promised  polynomial algorithm for sponge deficiency.  Lov\'asz \cite{factors}  developed the necessary structural results that he generalized later to jump systems in  \cite{Ljump}. As already noted (see the introductory part of Section~\ref{sec:classical}),  it is natural to restrict attention to sponges: allowing gaps of size two the problem becomes \NP-hard.

Graph factors can be understood as intersections of degree vectors of subgraphs and sponges. Both are ``jump systems''. This seems to be essential for their tractability. 

Other examples of jump systems are independent sets or bases of matroids, feasible sets of delta-matroids,  integer points of polymatroids or bisubmodular polyhedra. We  first describe an algorithm  to decide in polynomial time whether the intersection of a degree vector jump system with a sponge is   non-empty (Section~\ref*{subsec:general}). We then extend the argument to general jump systems, showing that essentially the same ideas apply (Section~\ref{subsec:jump}). Further refinements are possible e.g. trading restrictions on one of the jump systems  for greater generality of the other. 

Jump systems were  introduced by Bouchet and Cunningham \cite{BC} as a common  generalisation of degree vectors of subgraphs and delta-matroids. These, in turn, extend  matchable sets and matroids respectively.  Although jump systems include integer polymatroids, they are best viewed as genuinely combinatorial rather than polyhedral objects: like degree vectors, they may have “holes”, i.e.~integer points in their convex hull that do not belong to them.

\smallskip
Let $x\in\mathbb{Z}^V$. A {\em step at $v$ $(v\in \mathbb{Z})$ from}  $x$  is  an $x'\in\mathbb{Z}^V$ such that $x-x'=\pm e_v$ $(v\in V)$; we say that it is a step {\em towards}  a set $S\subseteq \mathbb{Z}^V$, if   $\mu(x', S)<\mu(x, S)$. Then clearly, 
$\mu(x', S)=\mu(x, S)-1$. 

\smallskip
 A {\em jump system} is a set $J\subseteq \mathbb{Z}^n$,  if $J\ne\emptyset$ and  for any $x, y\in J$ and  first step $x'$ from $x$ to $y$, either $x'\in J$, or there exists a step $x''$ from $x'$ towards $y$ such that 
$x''\in J$. 

In order to avoid case separations between one or two steps, in case of $x'\in J$ we  define the second step to be $x''=x'$.  (Both $\mu(x'', y)=\mu(x, y)-1$ (if $x''=x'$), and $\mu(x'', y)=\mu(x', y)-1=\mu(x, y)-2$ are possible.)  The definition of jump systems is called the {\em $2$-step-axiom}.  

Besides the mentioned examples encompassed by jump systems, one dimensional jump systems present another relevant example,  they are exactly the sponges:
\vspace{-0.15cm}
\begin{fact}\label{fact:onedim} Let $H\subseteq \mathbb{Z}$, $H\ne\emptyset$.  The following statements are equivalent:
	
{\rm(i)}  $H$ is a jump system,

{\rm(ii)}  $H$ is a sponge,

{\rm(iii)} $H$ has no gap of length two in the interval $[l,u]$, where $l:=\min H$, $u:=\max H$. \endproof
\end{fact}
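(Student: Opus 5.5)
The plan is to prove the cycle of implications (i)$\Rightarrow$(iii)$\Rightarrow$(ii)$\Rightarrow$(i). Throughout I work in dimension one, where $\mu(x,y)=|x-y|$, and a step from $x$ is $x\pm1$.

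\emph{(i)$\Rightarrow$(iii).} Suppose $H$ is a jump system but has a gap $[i,i+1]$ of length two with $l\le i<i+1\le u$. The endpoints of a gap are defined to satisfy $i-1\in H$ and $i+2\in H$ (reading the gap's right end as $i+1$), while $i,i+1\notin H$. Take $x:=i-1$ and $y:=i+2$ in $H$. The first step from $x$ towards $y$ is $x'=i\notin H$. A second step $x''$ from $x'$ towards $y$ must be $i+1$, which is not in $H$ either. This violates the $2$-step axiom. The one point to be careful about is that a gap of length at least two always contains such a pair $i,i+1$ with $i-1$ and $i+2$ in $H$. This is just the observation that a maximal run of integers missing from $[l,u]$ is flanked on both sides by elements of $H$; if the run has length at least $2$, I can apply the argument to its first two elements together with the two flanking elements.

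\emph{(iii)$\Rightarrow$(ii).} This is a translation of definitions. The sponge condition says that if $a<i<b$ with $a,b\in H$ and $i\notin H$, then $i\pm1\in H$. Suppose instead that $i-1\notin H$, say (the case $i+1\notin H$ is symmetric). Then $i-1,i$ lie in a maximal run of non-members inside $[a,b]\subseteq[l,u]$, and that run has length at least two, contradicting (iii). Nonemptiness of $H$ is given, and since the setting is one-dimensional, $H$ is trivially a product of one-dimensional sets.

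\emph{(ii)$\Rightarrow$(i).} Let $x,y\in H$ with $x\ne y$, say $x<y$; the case $x>y$ is symmetric. The first step is $x'=x+1$. If $x'\in H$ we are done with $x''=x'$. Otherwise $x<x+1<y$: here $x+1=y$ is impossible because $y\in H$. So the sponge property applied with $a=x$, $b=y$, $i=x+1$ gives $x+2\in H$, and $x''=x+2$ is a step from $x'$ towards $y$ (it satisfies $x+2\le y$).

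None of these steps is a serious obstacle. The only place needing care is making the ``gap'' terminology consistent: the paper's formal definition of a gap is awkwardly phrased. I will therefore state explicitly that I use the natural reading, namely a maximal interval of integers in $[l,u]$ disjoint from $H$.
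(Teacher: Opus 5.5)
Your cycle (i)$\Rightarrow$(iii)$\Rightarrow$(ii)$\Rightarrow$(i) is correct and is exactly the routine verification the paper leaves to the reader; the Fact is stated there without proof. There is one slip in (i)$\Rightarrow$(iii): for a gap $[i,j]$ of length at least three we have $i+2\notin H$, so your claim that a pair $i,i+1$ with $i-1,i+2\in H$ always exists is false. The fix is to take $y:=j+1$, the right flanking element; then the $2$-step axiom fails because the forced steps $i$ and $i+1$ from $x=i-1$ both lie in the gap, which is what your ``two flanking elements'' remark should say.
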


\noindent{\bf Problem}: (deficiency (of jump systems))  Let $J$, $H$ be jump systems. Is $J\cap H\ne\emptyset$? More generally, compute $\mu(J,H)$! Equivalently, find $z\in J$ minimizing $\mu(z,H)$. 

\medskip
The latter, asymmetric formulation is appropriate when $H$ is substantially simpler than $J$, e.g. when it is  given explicitly; this is the case when $H$ is a sponge.  (The general deficiency problem  is in fact equivalent to the problem of minimizing  $\mu(0, J-H)$, which is a jump system membership problem, and in this generality it would be hard to say  about it more than  \cite{Ljump}.) We will therefore seek for an {\em optimal} $z$ in $J$, i.e. $z\in J$ minimizing $\mu(z,H)$ for a given sponge $H$.  

Depending on whether $H$ in the above Problem is a general  sponge, or it is restricted to be classical, interval or parity, we call it a {\em sponge, classical, interval or parity} deficiency problem. 

The {\em general factor problem} is the sponge deficiency problem where $J$ is the set of degree vectors  of graphs.

For any of these problems to make sense it has to be clear how a jump system is given. The literature abounds in choices. The reason is that there is no unique best assumption for an oracle, the choice is the result of a nuanced balance between the specific goals and convenience. An overly strong oracle may kill the applications, an overly weak one prevents efficient work. 

Our modest ambitions conveniently resolve this difficult choice: the classical deficiency problem has been explored in  \cite{Ljump}, and admits good characterisations and polynomial-time optimisation algorithms under general assumptions, and in all special cases we know. (E.g.   $O(n^2)$ for degree vectors; another  example occurs in Section~\ref{subsec:jump}). We therefore adopt it as our oracle.

Our objective is thus to find an optimal $z\in J$ for a given sponge $H$, {\em assuming} that optimisation over $J$ is solved for any classical sponge; in fact an oracle for parity sponges suffices, and  this generality can be exploited  (e.g. Corollary~\ref{cor:classicalvsparity}). For a class of jump systems $\Jscr$  a {\em parity oracle} computes, in a single step, the deficiency of a jump system $J\in \Jscr$, $J\subseteq\mathbb{Z}^V$  with respect to any parity sponge $H\subseteq\mathbb{Z}^V$, and finds an optimal $z\in J$.   This choice isolates the genuinely new  challenges for sponges, relegating the classical deficiency problem  to existing theory. 
 
 We prove in the following two subsections:
 
 \noindent
 {\em An optimal general factor can be  found by solving $(2n+1)n$ parity factor problems;  the same number of parity oracle calls and a negligible number of other operations are sufficient for finding an optimal element of a jump system with respect to a sponge.}
  
To prove this we need to define  the {\em environment} of a point $z\in\mathbb{Z}^V$ in a sponge $H\subseteq \mathbb{Z}^V$. Intuitively, $H_z \subset H$ is an inclusionwise maximal parity sponge included in $H$ ``closest'' to $z$. For any jump system   $J\subseteq \mathbb{Z}^V$ and sponge $H\subseteq\mathbb{Z}^V$, it will turn out that $z\in J$ is   not optimal   for $H$ if and only if  either it is not optimal for $H_z$,  or it can be {\em improved by  escaping} from $H_z$. 

  We now define $H_z$ precisely (Figure~\ref{fig:environment}). 
Let $H\subseteq \mathbb{Z}$ be a sponge and $z\in  \mathbb{Z}$  arbitrary. In $H$ the (parity-){\em environment} $H_z$ of $z$  is the intersection $H\cap [l_z,u_z]$, where   
\[l_z:=\min\{h\in H: \hbox{ every integer in $H\cap [h, z]$ has the same parity}\}, \]
\[u_z:=\max\{h\in H: \hbox{ every integer in $H\cap [z, h]$ has the same parity}\}.\]	
There may be $0$, $1$, or $2$ {\em escape values}  from  $H_z$, 
and these are $l_z-1$ if $l_z-1\in H$, and $u_z+1$ if $u_z+1\in H$, respectively. Escape values mark changes in the environment. 
\begin{figure}[h]
	\centering
	\includegraphics[scale=0.3]{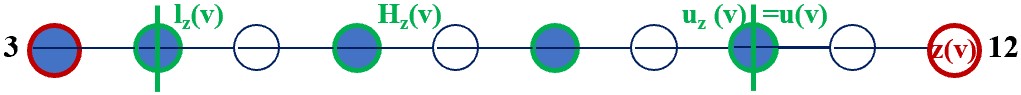}
	\caption{Environment of $z(v)=12$ with one escape ($=3$); if $d_F(v)=12$, $H_F=\{4,6,8,10\}$. }
	\label{fig:environment}
\end{figure} 	

\vspace{-0.25cm}
For  $H\subseteq \mathbb{Z}^V$ and $z\in  \mathbb{Z}^V$ we define $H_z\subseteq \mathbb{Z}^V$ entrywise, i.e.~$H_z$ is the Cartesian product of the sets  $H_{z(v)}(v)$ $(v\in V)$. 
For each $v\in V$ we define at most two possible {\em escapes} (from $H_z$): $H_{z,\underline v}, H_{z,\overline v}\subseteq \mathbb{Z}^V$. These are obtained with the same Cartesian product as $H_z$, but the component $H_{z(v)}(v)$ is replaced by a singleton containing the corresponding escape value. Specifically, the lower escape uses  $\{l_z(v)-1\}$ provided that $l_z(v)-1\in H$, and the upper escape uses $\{u_z(v)+1\}$ provided that $u_z+1\in H$.  If the relevant condition is  not satisfied the corresponding escape is undefined. Consequently there are at most $2n$ escapes from $H_z$.

\begin{fact}\label{fact:environment}  Let $H\subseteq \mathbb{Z}^V$ be a sponge. Then
	
	{\rm (i)} for all $z\in\mathbb{Z}^V$ : $H_z\subseteq \mathbb{Z}^V$ is a parity sponge, and   $\mu(z,H_z)=\mu(z,H)$; 
	
		{\rm (ii)} if for some $z\in\mathbb{Z}^V$ there is no lower escape, i.e.~$l_z(v)-1\notin H(v)$,  then $l_z(v)=l(v)$; 
	
		{\rm (iii)} similarly, $u_z(v)+1\notin H(v)$ implies $u_z(v)=u(v)$.\endproof
\end{fact}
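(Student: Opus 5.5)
Everything is defined coordinatewise, so I will fix $v\in V$, write $K:=H(v)$, $a:=z(v)$, $l_z:=l_z(v)$, $u_z:=u_z(v)$, and work in one dimension. For the distance part of (i), $\mu$ is a sum of coordinate distances and $H_z$, $H$ are Cartesian products, so $\mu(z,H_z)=\sum_v \mu(z(v),H_{z(v)}(v))$ and likewise for $H$. It therefore suffices to prove, for a sponge $K\subseteq\mathbb{Z}$ and $a\in\mathbb{Z}$, that $K_a:=K\cap[l_z,u_z]$ is a nonempty parity sponge with $\mu(a,K_a)=\mu(a,K)$. Statements (ii) and (iii) are one-dimensional already.

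First I make the definitions of $l_z,u_z$ sensible. Let $k\in K$ be a point nearest to $a$, which exists since $K\ne\emptyset$. If $k\ge a$, then $K\cap[a,k]=\{k\}$; if $k\le a$, then $K\cap[k,a]=\{k\}$. In either case $h=k$ is admissible in the relevant definition. When $k\ge a$, I read the set in the definition of $l_z$ via $[h,z]$ also for $h>z$, meaning $K\cap[\min(h,z),\max(h,z)]$. With that reading, $l_z\le k\le u_z$ and $k\in K_a$. This gives $\mu(a,K_a)\le|a-k|=\mu(a,K)$, and the reverse inequality is trivial since $K_a\subseteq K$. So the distance equality holds.

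Next I show $K_a$ is a parity sponge. If $a\le k$, all elements of $K\cap[l_z,a]$ have the same parity, and that set is empty or its elements are congruent to $k$, since by nearness the only element of $K$ in $[a,k]$ is $k$. The symmetric statement holds on $[a,u_z]$. The element $k$ links the two sides, so all elements of $K_a$ share one parity. A one-dimensional sponge all of whose elements share a parity must contain every second integer between its minimum and maximum. Otherwise there would be a gap of length at least $3$ between consecutive same-parity elements, contradicting the sponge property (Fact~\ref{fact:onedim}(iii)). Intersecting with the interval $[l_z,u_z]$ preserves this, so $K_a$ is a parity sponge. This is the step I expect to need the most care: the precise treatment of where $a$ sits relative to $K$, so that the parity runs on the two sides of $a$ glue together through $k$.

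Finally, for (ii), suppose $l_z-1\notin K$ and, for contradiction, that some element of $K$ is below $l_z$. Let $h'$ be the largest such element. Since $l_z-1\notin K$, we have $h'\le l_z-2$ and $K\cap(h',l_z)=\emptyset$. If $h'=l_z-2$, it has the parity of $l_z$, and then $h'$ would be admissible in the definition of $l_z$, contradicting minimality. If $h'\le l_z-3$, then $[h'+1,l_z-1]$ is a gap of length at least $2$ inside $[\min K,\max K]$, contradicting Fact~\ref{fact:onedim}(iii). Hence no element of $K$ lies below $l_z$, i.e.\ $l_z=\min K=l(v)$. Statement (iii) follows by the symmetric argument, replacing $K$ by $-K$.
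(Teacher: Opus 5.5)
Your proof has a gap in the gluing step of (i); the rest holds. (The paper states this Fact without proof, treating it as evident.) The reduction to one coordinate, the argument that a nearest point $k$ lies in $K_a=K\cap[l_z,u_z]$ (hence the distance equality), the "every second integer" step, and your proofs of (ii) and (iii) are all fine.

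The gap is where you claim that, for $a\le k$, the set $K\cap[l_z,a]$ is empty or congruent to $k$ "since by nearness the only element of $K$ in $[a,k]$ is $k$." Nearness does not imply this. Take the non-sponge $K=\{0,5\}$ and $a=3$. The nearest point is $k=5$, and indeed $K\cap[a,k]=\{5\}$. Yet $0$ is admissible, so $l_z=0$ and $K\cap[l_z,a]=\{0\}$, and $K_a=\{0,5\}$ contains both parities. So this is exactly where the parity claim of (i) needs the sponge property, and your argument does not invoke it there. You use the sponge property only afterwards, to fill in every second integer.

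The repair is short. If $a=k$, then $k\in K\cap[l_z,a]$ and there is nothing to show. If $a<k$ and $K\cap[l_z,a]\neq\emptyset$, then $a\notin K$ and $a$ lies strictly between two elements of $K$. The sponge condition then gives $a-1,a+1\in K$, so $k=a+1$. Moreover $a-1$ is admissible for $l_z$, because $K\cap[a-1,a]=\{a-1\}$. Hence $a-1\in K\cap[l_z,a]$, and the common parity of that set is that of $a-1\equiv k$. The case $a\ge k$ is symmetric. With this inserted, your proof is complete.

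A side remark: your symmetrized reading of $[h,z]$ for $h>z$ is harmless but unnecessary. Read literally, $K\cap[h,z]=\emptyset$ for $h>z$, the condition holds vacuously, and you get the same values of $l_z$ and $u_z$.
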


The environment and the $2n$ escapes -- at most $2n+1$  sets of vectors -- were introduced in  \cite{thesis} for $z:=d_F$, where $F\subseteq E$, and then $H_z$, $H_{z,\underline v}$,  $H_{z,\overline v}$,  $l_z$, $u_z$ are denoted  $H_F$, $H_{F,\underline v}$, $H_{F,\overline v}$, $l_F$, $u_F$ respectively. They provide a helpful framework for solving the general factor problem via classical deficiencies (Section~\ref{subsec:general}), and then for extending this solution to jump systems  (Section~\ref{subsec:jump}).  

\subsection{Degree vectors and sponges: general factors}\label{subsec:general}

Lovász's article on {\em antifactors} \cite{antifactors}, and his question  on factors satisfying sponge constraints \cite{factors, LPL}, first motivated a generalisation to  $H$-factors,  where $H$ is a sponge without upper and lower  bounds \cite{AS}, i.e. with  $l(v)\in\{0,1\}$, $u(v)\in\{d(v)-1, d(v)\}$, and an arbitrary number of gaps of length one. 

The general factor problem with possible upper and lower bounds, i.e. for sponges in full generality, was solved by Cornuéjols \cite{corn1}. His approach is based  on an Edmonds-type algorithm leading to a Tutte-style characterisation \cite[Section 3]{corn1}, \cite[Section 4]{corn2}. For cubic graphs, the problem reduces to edge–triangle partitioning \cite[Section 2]{edgetriangle,corn1,corn2}.  

While powerful enough for  solving the general factor problem, this method is technically involved and the  certificate of infeasibility replaces odd parity of “critically non-matchable components” with an  execution of the algorithm itself rather than with a transparent   parity condition.   

A shorter approach was thereafter  proposed in  \cite[Section~4.4]{thesis}, giving a  concise reduction of the {\em deficiency of general factors}  to classical, and in fact purely parity deficiency problems; an account  was outlined in  \cite[Section 3]{corn2}. We reproduce here the full solution of  \cite{thesis}, in a form suited for extension to jump systems in the next subsection.

This solution relies on classical algorithms as a black box and yields no structural characterisation. It nevertheless has several advantages: in addition to deciding feasibility, it also minimizes the deficiency, leads to improved complexity bounds, and — most importantly for us here — is readily adaptable to the deficiency problem for jump systems: Section~\ref{subsec:jump} on this extension to jump systems will closely  mimic the graph special case of the present section. 

Our approach is based on  a simple notion introduced by Lovász \cite{factors}:  
given a graph $G=(V,E)$, a sponge $H$, and $F\subseteq E$, an {\em elementary change}  (with respect to $H$) is the removal or addition of an edge $e=ab\in E$  so that distance of $d_F(a)$ from  $H(a)$ is nonzero and does not increase with the change; we denote this elementary change by $(F;a,b)$.
 
 In other words, the elementary change $(F;a,b)$ supposes that $ab\in E$, and
\begin{equation}\label{eq:echange}
	0\ne\mu(d_{F}(a),H(a))\ge \mu(d_{F\Delta e}(a),H(a)). 
	 \vspace{-0.2cm}
\end{equation}

\begin{fact}\label{fact:elementary}
 Let $H$ be a sponge,  $e=ab\in E$, and let $(F;a,b)$ be an elementary change. Then:
\begin{itemize}
\vspace{-0.1cm}
\item[{\rm (i)}] $\mu(d_{F\Delta \{e\}}(a), H(a)) < \mu(d_F(a), H(a))$, unless $e$ is a loop with 
the number between $d_{F}(a)$ and $d_{F\Delta \{e\}}(a)$ in $H(a)$,  $\mu(d_{F\Delta \{e\}}(a), H(a)) = \mu(d_F(a), H(a))=1$,
\vspace{-0.1cm}
\item[{\rm (ii)}]  $\mu(F\Delta \{e\}, H)\le  \mu(F, H)$,
\vspace{-0.1cm}
\item[{\rm (iii)}] if $H_{F\Delta \{e\}}\ne H_F$, then $d_{F}(b), d_{F\Delta \{e\}}(b)\in H(b)$, $\mu(F\Delta \{e\}, H)<\mu(F,H)$, furthermore

\,\,\,\quad -- either $e\in F$, $d_{F\Delta \{e\}}(b)=l_F(b)-1 \in H(b)$,\, and  $\mu(F\Delta \{e\}, H_{F,\underline b})<\mu(F,H)$;

\,\,\,\quad --  or\,\,\,\,\,\,\,\,\, $e\notin F$,   $d_{F\Delta \{e\}}(b)=u_F(b)+1 \in H(b)$, and 
$\mu(F\Delta \{e\}, H_{F,\overline b})<\mu(F,H)$.
\end{itemize} 
\end{fact}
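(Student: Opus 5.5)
The plan is to reduce everything to the one-dimensional sponge $H(a)$ and $H(b)$, since $\mu(\cdot,H)$ is a sum of independent entrywise distances. Only the entries at $a$ and $b$ change when we pass from $F$ to $F\Delta\{e\}$, by $\pm1$ each, or by $\pm2$ at $a$ if $e$ is a loop. I would therefore write
\[
\mu(F\Delta\{e\},H)-\mu(F,H)=\Delta_a+\Delta_b ,
\]
with $\Delta_a,\Delta_b$ the changes of the distances at $a$ and $b$, and bound the two terms separately.

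For (i), first take $e$ not a loop. Then $d(a)$ moves by exactly one, so the one-dimensional distance changes by exactly $\pm1$. By \eqref{eq:echange} it does not increase, hence it strictly decreases. If $e$ is a loop, $d(a)$ moves by $2$, and the distance changes by $-2$, $0$ or $+2$. The value $+2$ is excluded by \eqref{eq:echange}. The value $0$ with a nonzero distance forces the intermediate value to lie in $H(a)$. Indeed, if $d_F(a)\notin H(a)$ is at distance $\ge 1$ and $d_F(a)\pm2$ is at the same distance, the sponge property (no gap of length two, Fact~\ref{fact:onedim}) leaves only the case that $d_F(a)\pm1\in H(a)$ and both endpoints lie at distance $1$. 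This is the stated exception.

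For (ii), the loop case is just (i). Otherwise $\Delta_a=-1$ and $\Delta_b\le 1$, so the sum is $\le 0$.

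For (iii), $H_{F\Delta\{e\}}\ne H_F$ means the environment changed at some vertex. Environments are determined entrywise by $d(v)$, so this can only happen at $a$ or $b$. I would first show it cannot happen at $a$. The move at $a$ goes towards $H(a)$: if $d_F(a)$ lies in a gap or outside $[l,u]$, then $d_F(a)$ and $d_{F\Delta e}(a)$ share the same nearest same-parity block. I would check this from the definition of $l_z,u_z$ with $z$ inside a gap of length one, using Fact~\ref{fact:environment}(ii),(iii) for the boundary cases. The loop exception also needs separate checking here.

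So the change happens at $b$. Its environment changes only when the value crosses from a parity block across its endpoint to an adjacent element of $H(b)$ of the opposite parity. This forces $d_F(b)\in H(b)$, with $d_F(b)$ equal to $l_F(b)$ or $u_F(b)$, and the new value equal to $l_F(b)-1$ or $u_F(b)+1$, which also lies in $H(b)$. The direction determines whether $e\in F$: a decrease means $e\in F$ is removed, an increase means $e\notin F$ is added. Then $\Delta_b=0$ and $\Delta_a=-1$, giving $\mu(F\Delta\{e\},H)<\mu(F,H)$.

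Finally, for the escape $H_{F,\underline b}$ (respectively $H_{F,\overline b}$), the $b$-entry is matched exactly. Every other entry of $F\Delta\{e\}$ uses $H_F$, whose distances agree with those to $H$ by Fact~\ref{fact:environment}(i), except at $a$, where the distance dropped. This yields the strict inequality against $\mu(F,H)$.

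I expect the main obstacle to be the claim that the environment at $a$ is unchanged. It requires a careful case analysis of the definitions of $l_z,u_z$ when $z$ is in a gap or outside $[l,u]$, and of loops where $a=b$, which may need to be treated as a special case or excluded.
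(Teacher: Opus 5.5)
For non-loop edges your argument is the paper's, in direct rather than contrapositive form. The paper assumes $d_{F\Delta\{e\}}(b)\notin H(b)$ (or $d_F(b)\notin H(b)$), deduces that $l_F,u_F$ are unchanged at $b$, and by (i) also at $a$, so $H_{F\Delta\{e\}}=H_F$. This is your observation that a unit move of $d(a)$ towards $H(a)$ never changes the environment, and that a unit move at $b$ changes it only between two consecutive elements of $H(b)$. Your entrywise count for the escape avoids the paper's claim $H_{F\Delta\{e\}}(v)=H_{F,\underline b}(v)$, which at $v=b$ is true only for the distances. One small point: in (i), \emph{changes by exactly $\pm1$} already needs the sponge property together with $\mu(d_F(a),H(a))\ne 0$. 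For $\{0,3\}$ the move $1\to 2$ keeps the distance.

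The genuine gap is the loop case, and it is not where you expected it. Your list $-2,0,+2$ of possible changes is incomplete: a loop can lower the distance by exactly $1$. This happens precisely when $d_F(a)\pm1$ and $d_F(a)\pm2$ (same sign) are consecutive elements of $H(a)$. That is harmless for (i), but it is exactly where your planned step \emph{the environment at $a$ cannot change} fails; the exceptional case of (i) that you singled out is harmless. In this case (iii) is actually false. Take $V=\{a,c\}$, $E=\{ac,e\}$ with $e$ a loop at $a$, $H(a)=\{0,2,3\}$, $H(c)=\{1\}$ and $F=\{ac\}$. Then $(F;a,a)$ is an elementary change, since $d_F(a)=1$ is at distance $1$ and $d_{F\Delta\{e\}}(a)=3\in H(a)$. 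Also $H_F(a)=\{0,2\}\ne\{3\}=H_{F\Delta\{e\}}(a)$, yet $d_F(b)=d_F(a)=1\notin H(b)$.

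So no argument can close this case. The paper's proof has the same hole: its assertions that (i) keeps $l,u$ unchanged at $a$, and that $d_F(b)\notin H(b)$ forces $H_{F\Delta\{e\}}=H_F$, are valid only for $a\ne b$, since for a loop $d_F(b)=d_F(a)\notin H(a)$ always. The repair is to state (iii) for non-loops only, or for loops to drop just the clause $d_F(b)\in H(b)$. A check of the few loop configurations shows that the environment then changes only when $d_{F\Delta\{e\}}(a)$ equals $u_F(a)+1$ (loop added) or $l_F(a)-1$ (loop removed). In those cases the strict decrease and the escape inequality still hold, and these are all that Theorem~\ref{thm:sponge_fact} uses.
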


\prove (i) can be readily checked from (\ref{eq:echange}) using also that $H$ is a sponge; (i) implies (ii). 

For (iii), suppose for a contradiction that  $d_{F\Delta \{e\}}(b)\notin H(b)$. Then, since $H$ is a sponge,  \eqref{eq:bounds_unchanged}  and consequently (\ref{eq:unchanged}) hold for $v=b$; by  (i), \eqref{eq:bounds_unchanged}  and (\ref{eq:unchanged}) hold for $v=a$ as well.  
\begin{equation}
	l_{F\Delta \{e\}}(v)=l_F(v),\, u_{F\Delta \{e\}}(v)=u_F(v),  \vspace{-0.2cm}
	\label{eq:bounds_unchanged}
\end{equation}  
\begin{equation}
	H_{F\Delta \{e\}}(v)=H_F(v).  \vspace{-0.1cm}
	\label{eq:unchanged}
\end{equation}

For $v\in V\setminus\{a,b\}$ \eqref{eq:bounds_unchanged},  \eqref{eq:unchanged} 
are   obvious. Thus $H_{F\Delta \{e\}}=H_F$,
contradicting the assumption, and proving $d_{F\Delta \{e\}}(b)\in H(b)$, moreover: 
\vspace{-0.3cm}
\begin{equation}
	d_{F\Delta \{e\}}(b)\in H(b),\quad d_{F}(b)\in H(b),\quad l_F(b)=d_F(b), 
 \vspace{-0.1cm}
	\label{eq:consec}
\end{equation}
because  $d_{F}(b)\notin H(b)$  would imply  \eqref{eq:bounds_unchanged} and then  \eqref{eq:unchanged} would again hold for all $v\in V$. 

Let us check next that under the condition of (iii) we have:
\vspace{-0.2cm}
\begin{equation}
	\mu(F\Delta \{e\}, H)<\mu(F,H).  \vspace{-0.2cm}
	\label{eq:improved}
\end{equation} 
The exception of (i) cannot occur here: it would imply $a=b$ and hence \eqref{eq:bounds_unchanged}, \eqref{eq:unchanged}  for all $v\in V$. Therefore,  $\mu(d_{F\Delta \{e\}}(a), H(a)) < \mu(d_F(a), H(a))$ by (i), while
$\mu(d_{F\Delta \{e\}}(b), H(b)) \le \mu(d_F(b), H(b))$ by the already proven $d_{F\Delta \{e\}}(b)\in H(b)$. We obtain   \eqref{eq:improved} adding up the strict inequality for $a$, the inequality for $b$,  and $\mu(d_{F\Delta \{e\}}(v), H(v))=\mu(d_F(v),H(v))$ for $v\in V\setminus\{a,b\}$.  

Finally, we distinguish cases:

If $e\in F$, then $d_{F\Delta \{e\}}(b)=d_F(b)-1$, so $d_F(b), d_F(b)-1\in H(b)$, $l_F(b)=d_F(b)$, and
\vspace{-0.2cm}
\begin{equation}
	\mu(F\Delta \{e\}, H)=\mu(F\Delta \{e\}, H_{F\Delta \{e\}})=\mu(F\Delta \{e\},H_{F,\underline b}), \vspace{-0.2cm}
	\label{eq:eq}
\end{equation}
where the first equality is  Fact~\ref{fact:environment}(i); we see the second equality by observing $H_{F\Delta \{e\}}(v)=H_{F,\underline b}(v)$ for all $v\in V$, using \eqref{eq:consec}. 

The case  $e\notin F$ follows by symmetry, finishing the proof of   Fact~\ref{fact:elementary}. \endproof

\smallskip\noindent
{\bf Note}: The roles of $a$ and $b$ are {\em not} symmetric in  $(F;a,b)$: (i) holds for $a$, but replacing $a$ by $b$ we may have the reverse strict inequality; if we don't, we can improve; if $H_{F\Delta \{e\}}=H_F$ we improve with respect to $H_F$ and thus with respect to $H$ by Fact~\ref{fact:environment}(i); if $H_{F\Delta \{e\}}\ne H_F$ we improve  by Fact~\ref{fact:elementary} (iii). This argument will be formalized in the proof of Theorem~\ref{thm:sponge_fact} below.

 Both cases of (iii) hold if $u_F=l_F$, i.e. if $u_F-1, u_F, u_F+1\in H$.  

\smallskip
For $F,F'\subseteq E$ write $F\rightarrow_H F'$ if $F'$ can be obtained from $F$ by a succession of elementary changes (with respect to $H$).  Mostly,  $H$ is fixed and clear from the context; then we delete it in the index. Clearly, the relation ``$\rightarrow$'' is transitive. 

The following lemma and  proof are from  \cite{factors}. We include the proof  to highlight where the sponge property of $H$ is used (Case 3), preparing the extension to jump systems in Section~\ref{subsec:jump}.

\vspace{-0.2cm}
\begin{lemma}\label{lem:LL}
	Let $G=(V,E)$ be a graph and let $H\subseteq\mathbb{Z}_+^V$  be a sponge.
	Then for every $F\subseteq E$ there exists an $H$-optimal 
	$F_H\subseteq E$ such that $F\rightarrow F_H$.
\end{lemma}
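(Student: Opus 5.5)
Fix $F$ and an arbitrary $H$-optimal set $F^*$. I argue by induction on $|F\Delta F^*|$, choosing $F^*$ among all optimal sets to minimize $|F\Delta F^*|$. If $F$ is itself optimal we take $F_H:=F$ with the empty sequence of changes. Otherwise $\mu(F,H)>\mu(F^*,H)$, so some vertex $a$ has $\mu(d_F(a),H(a))>0$ and $d_F(a)\ne d_{F^*}(a)$. The goal is to find an edge $e=ab\in F\Delta F^*$ such that $(F;a,b)$ is an elementary change, with $F\Delta\{e\}$ still reachable and $|(F\Delta\{e\})\Delta F^*|<|F\Delta F^*|$. Induction then gives $F\Delta\{e\}\rightarrow F_H$, and transitivity gives $F\rightarrow F_H$.

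To find $e$, I use the structure of $D:=F\Delta F^*$. Choose $a$ with $\mu(d_F(a),H(a))>0$ that lies on the side of $H(a)$ where $d_{F^*}(a)$ sits. Concretely, if $d_F(a)$ is below the nearest element of $H(a)$ toward $d_{F^*}(a)$, then an edge of $F^*\setminus F$ at $a$ exists, and adding it moves $d_F(a)$ toward $H(a)$ without increasing the distance. The symmetric statement holds with an edge of $F\setminus F^*$. By (\ref{eq:echange}) this is an elementary change, and it shrinks the symmetric difference with $F^*$.

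The work is in justifying that such an $a$ exists. Decompose $D$ into alternating walks between $F\setminus F^*$ and $F^*\setminus F$, with ends at vertices where $d_F\ne d_{F^*}$. I split into three cases.

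\emph{Case 1.} Some vertex $a$ with $\mu(d_F(a),H(a))>0$ has $d_{F^*}(a)$ strictly on the same side as $H(a)$, and $d_F(a)\notin[\min H(a),\max H(a)]$. The edge is immediate.

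\emph{Case 2.} Every deficient vertex of $F$ has $d_F(a)=d_{F^*}(a)$, or $d_{F^*}(a)$ lies on the opposite side. In this case I exchange along an alternating walk of $D$ starting at a vertex where $F^*$ is not optimal locally. This produces a set $F^*\Delta W$ that is still optimal and closer to $F$, contradicting the minimal choice of $F^*$. It is a counting argument comparing the local deficiencies at the two endpoints of the walk.

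\emph{Case 3.} This is the genuinely new case. A deficient vertex $a$ has $d_F(a)$ strictly inside a gap of $H(a)$, that is, $d_F(a)=i\notin H(a)$ with $i\pm1\in H(a)$. Here the sponge property is used. Because gaps have length one, every step of $d_F(a)$ toward $d_{F^*}(a)$ reaches $H(a)$, so either direction is an elementary change. Any edge of $D$ at $a$ therefore works, since $d_F(a)\ne d_{F^*}(a)$ or else $a$ would be deficient in $F^*$ as well. If $d_F(a)=d_{F^*}(a)$, then $a$ is deficient for $F^*$ too, and I fall back on the exchange argument of Case 2.

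I expect Case 2 to be the main obstacle: showing that the alternating-walk exchange keeps $F^*$ optimal while decreasing $|F\Delta F^*|$ requires care about loops and about walks passing repeatedly through the same vertex. If that fails, I would instead induct on $\mu(F,H)+|F\Delta F^*|$, allowing $F^*$ to be replaced by any optimal set.
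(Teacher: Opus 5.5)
Your overall scheme is the paper's: induct on $|F\Delta F^*|$ and find a single elementary change at $F$ along an edge of $F\Delta F^*$. As written, however, the proof has a gap. Your Case~2, and the fallback in Case~3 when $d_F(a)=d_{F^*}(a)$, both rest on an alternating-walk exchange that you announce but do not carry out. Nothing in the proposal shows that $F^*\Delta W$ is still optimal. Exchanging along a walk changes $d_{F^*}$ at both of its ends, and you give no control over the deficiency at the far end; the ``counting argument'' is never supplied. Since your case analysis depends on this step, the proof is incomplete.

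The missing idea is a sharper choice of $a$, and with it Case~2 disappears. Because $\mu(\cdot,H)$ is a sum of vertex terms, $\mu(F^*,H)<\mu(F,H)$ gives a vertex $a$ with $\mu(d_{F^*}(a),H(a))<\mu(d_F(a),H(a))$. This is strictly more than the property ``deficient and $d_F(a)\ne d_{F^*}(a)$'' that you extracted. For this $a$ there are three cases.
\begin{itemize}
\item If $d_F(a)\ge u(a)$, then $d_F(a)>u(a)$, since $u(a)\in H(a)$ and $a$ is deficient. Necessarily $d_{F^*}(a)<d_F(a)$, so there is an edge of $F\setminus F^*$ at $a$, and deleting it is an elementary change. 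A loop lowers the degree by $2$, but the distance still does not increase.
\item The case $d_F(a)\le l(a)$ is symmetric.
\item If $l(a)<d_F(a)<u(a)$, then $d_F(a)\notin H(a)$, and the sponge property gives $d_F(a)\pm1\in H(a)$. Since $d_F(a)\ne d_{F^*}(a)$, some edge of $F\Delta F^*$ meets $a$, and changing any such edge is an elementary change.
\end{itemize}
So the configuration of your Case~2 never arises, and the bad subcase of Case~3 is excluded. You also do not need $F^*$ to minimize $|F\Delta F^*|$: a fixed optimal $F^*$ suffices for the induction. With this choice of $a$, your argument becomes exactly the paper's three-case proof.
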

\vspace{-0.2cm}

\prove
	Let $F\subseteq E$ be arbitrary, $F^*\subseteq E$ be $H$-optimal,
	and let $F_H\subseteq E$ satisfy $F\rightarrow F_H$.
	Choose $F^*$ and $F_H$ so that $|F^*\Delta F_H|$ is minimum.
	We claim that $F^*=F_H$.
	
	Suppose otherwise. Then $F_H$ is not optimal, so there exists
	$a\in V$ such that
		\vspace{-0.2cm}
	\begin{equation}\label{eq:nopt}
		\mu\bigl(d_{F^*}(a),H(a)\bigr)
		<
		\mu\bigl(d_{F_H}(a),H(a)\bigr).
		\vspace{-0.3cm}
	\end{equation}
	
	\medskip
	\noindent\textbf{Claim.}
	There exists an edge $e=ab\in F^*\Delta F_H$ such that
	$(F_H;a,b)$ is an elementary change.
	
	\smallskip
	\noindent\emph{Case 1.} $d_{F_H}(a)\ge u(a)$.
	Then~\eqref{eq:nopt} implies $d_{F^*}(a)<d_{F_H}(a)$.
	Choose any edge $e=ab\in F_H\setminus F^*$.
	
	\smallskip
	\noindent\emph{Case 2.} $d_{F_H}(a)\le l(a)$.
	Then similarly,  $d_{F_H}(a)<d_{F^*}(a)$.
	Choose any edge $e=ab\in F^*\setminus F_H$.
	
	\smallskip
	\noindent\emph{Case 3.} $l(a)< d_{F_H}(a)< u(a)$.
	Then~\eqref{eq:nopt} yields $d_{F_H}(a)\notin H(a)$.
	Since $H$ is a sponge, we have
	$d_{F_H}(a)-1,d_{F_H}(a)+1\in H(a)$.
	Choose any edge incident to $a$,  $e=ab\in F^*\Delta F_H$.
	
	\smallskip
	In all cases, the chosen edge $e$ satisfies the claim.
	
By the claim, $F\rightarrow F_H':=F_H\Delta\{e\}$, and
		$|F^*\Delta F_H'|<|F^*\Delta F_H|$,
contradicting the minimality of $|F^*\Delta F_H|$.
	Hence $F^*=F_H$, completing the proof.
\endproof

We now state and prove the promised reduction of general factors to classical factors \cite{thesis}: 
 \begin{theorem}\label{thm:sponge_fact}
 	Let $G=(V,E)$ be a graph,  $F\subseteq E$, and $H\subseteq\mathbb{Z}^V$ a sponge. Then $F$ is not  optimal  if and only if there exists an edge-set     $Y\subseteq E$ with smaller distance  than $\mu(F,H)$ from $H_F$ or from at least one of the $2n$ escapes.
 \end{theorem}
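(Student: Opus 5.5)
The plan is to prove the two directions separately. The ``if'' direction follows from inclusions of sets. The ``only if'' direction follows the chain of elementary changes supplied by Lemma~\ref{lem:LL} and stops at the first moment the environment changes.

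\emph{If.} First I will check that $H_F\subseteq H$ and that every escape $H_{F,\underline v}$, $H_{F,\overline v}$ is contained in $H$.
\begin{itemize}
\item Each $H_{F}(v)=H(v)\cap[l_F(v),u_F(v)]\subseteq H(v)$.
\item An escape replaces one coordinate set by a singleton whose escape value lies in $H(v)$ by definition.
\end{itemize}
Hence for any $Y\subseteq E$ and any $H'$ among $H_F$ and the escapes, $\mu(Y,H)\le\mu(Y,H')$. So if $\mu(Y,H')<\mu(F,H)$, then $Y$ has strictly smaller deficiency than $F$ with respect to $H$, and $F$ is not optimal.

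\emph{Only if.} Suppose $F$ is not optimal. By Lemma~\ref{lem:LL} there is an $H$-optimal $F_H$ with $F\rightarrow F_H$. That is, there is a sequence $F=F_0,F_1,\dots,F_k=F_H$ in which each $F_{j+1}=F_j\Delta\{e_j\}$ arises by an elementary change $(F_j;a_j,b_j)$. Then:
\begin{itemize}
\item By Fact~\ref{fact:elementary}(ii), $\mu(F_j,H)$ is non-increasing in $j$.
\item Since $F$ is not optimal, $\mu(F_k,H)<\mu(F,H)$.
\end{itemize}
I will distinguish two cases according to whether the environment ever changes along the sequence.

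\emph{Case A: $H_{F_j}=H_F$ for all $j\le k$.} By Fact~\ref{fact:environment}(i), $\mu(F_k,H_F)=\mu(F_k,H_{F_k})=\mu(F_k,H)<\mu(F,H)$. So $Y:=F_k$ works with respect to $H_F$.

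\emph{Case B: there is a first index $i$ with $H_{F_{i+1}}\ne H_{F_i}$.} By minimality of $i$, $H_{F_i}=H_F$. Fact~\ref{fact:elementary}(iii) applied to the change $(F_i;a_i,b_i)$ gives, for the appropriate escape $H'\in\{H_{F_i,\underline{b_i}},H_{F_i,\overline{b_i}}\}$,
\[\mu(F_{i+1},H')<\mu(F_i,H)\le\mu(F,H),\]
where the second inequality is the monotonicity noted above. So $Y:=F_{i+1}$ has smaller distance from $H'$ than $\mu(F,H)$.

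The point I expect to need care is that $H'$ must be one of the $2n$ escapes of $H_F$ itself, not of $H_{F_i}$. The escapes from $H_z$ are determined entirely by the numbers $l_z(v),u_z(v)$ and by $H$. Moreover, $H_{F_i}=H_F$ forces $l_{F_i}=l_F$ and $u_{F_i}=u_F$ coordinatewise, because $H_z(v)=H(v)\cap[l_z(v),u_z(v)]$ and $l_z(v),u_z(v)\in H(v)$ are respectively the minimum and maximum of this set. Hence $H_{F_i,\underline{b_i}}=H_{F,\underline{b_i}}$ and $H_{F_i,\overline{b_i}}=H_{F,\overline{b_i}}$, and both are defined exactly when they are defined for $F$. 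This completes the plan.
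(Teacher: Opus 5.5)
Your proposal is correct and follows the paper's proof essentially step for step: the ``if'' direction uses the inclusions of $H_F$ and the escapes in $H$, and the ``only if'' direction follows the trajectory of elementary changes from Lemma~\ref{lem:LL}. That trajectory is split according to whether the environment ever changes, with Fact~\ref{fact:elementary}(iii) applied at the first change and Fact~\ref{fact:elementary}(ii) giving monotonicity; your additional observation that $H_{F_i}=H_F$ forces $l_{F_i}=l_F$ and $u_{F_i}=u_F$, and hence identical escapes, justifies a step the paper only asserts.
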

It is important to note a subtle point in this assertion: the goal is not to decrease the deficiency of $F$ with respect to the environment or the escapes, but to decrease one of these below $\mu(F,H)$.  
 In other words, the optimality of $F$ with respect to $H$   cannot be simplified to  merely the optimality with respect to  the  $2n+1$ parity sponges concerned.
 
  Consider a graph on two vertices, $a$ and $b$ joined by  $9$ parallel edges. Let 
 \vspace{-0.25cm}
  \[H(a):=\{0,1,3,5,7,9\},\quad H(b):=\{2,4,6,8\},\quad \hbox{and}\quad F:=E.\vspace{-0.25cm}\]
   
  Then $F$ is optimal, and  $\mu(d_F,H)=1$. But $F$ is not optimal with respect to 
$H_{F, \underline a}$: since  $d_F(a)=9$, $l_F(a):=l_9(a)=1$, so $l_F(a)-1=0$, and 
$H_{F, \underline a}(a)=\{0\}$,  $H_{F, \underline a}(b)=\{2,4,6,8\}$. Hence
$\mu(F,H_{F, \underline a})=10$, whereas for $F'$ consisting of a single edge, $\mu(F', H_{F, \underline a})=1$. Thus, although $F$ is not optimal with respect to $H_{F, \underline a}$,   Theorem~\ref{thm:sponge_fact} does establish optimality: the minimum deficiency of $F$ with respect to $H_F$ or  to the escapes is not smaller than $\mu(F,H)=1$.

{\em Improving with respect to one of the escapes does not necessarily imply an improvement with respect to $H$.} (Improving with respect to the environment obviously does by Fact~\ref{fact:environment}(i).)

If parallel edges are to be avoided, one may subdivide each edge $e$ into two edges with a new vertex $v_e$, and assign $H(v_e):=\{0,2\}$ on the new vertices $v_e$ $(e\in E)$, obtaining an equivalent example without parallel edges.

\smallskip\noindent
{\bf Proof  of Theorem~\ref{thm:sponge_fact}}: The if part is easy: since $H_F, H_{F, \underline v}, H_{F, \overline v}\subseteq H$, for all $v\in V$, we have 

\smallskip
\centerline{\hbox{for each $F'\subseteq E$ }, $\displaystyle\mu(F',H)\le \min\big\{\mu(F',H_F),  \cup_{v\in V}\{\mu(F',H_{F, \underline v}),  \mu(F',H_{F, \overline v})\} \big\},$ }

\smallskip\noindent
and therefore, if the right hand side is smaller than  $\mu(F,H)$,  then so is the left hand side, showing that $F$ is not optimal. 

To prove the only if part, suppose that $F$ is not optimal,   and apply   Lemma~\ref{lem:LL}: let 
\vspace{-0.25cm}
\begin{equation}\label{eq:notopt}
F_H\subseteq E,\,  F\rightarrow F_H,\, \mu(F_H, H)<\mu(F,H).\vspace{-0.25cm}
\end{equation} 
Let $\Fscr$ be the family of edge-sets occurring in the series of elementary changes starting with $F$ and ending with $F_H$.
 
If for all $\Phi\in\Fscr$ we have $H_{\Phi}=H_F$,  we are done, 
for then $H_{F_H}=H_F$, and 
\vspace{-0.25cm}
\[\mu(F_H,H_{F})= \mu(F_H,H_{F_H})= \mu(F_H,H)<\mu(F,H),\vspace{-0.25cm}\]
where the first equality repeats $H_{F_H}=H_F$,  the second is  Fact~\ref{fact:environment}(i) applied to $z:=F_H$, and the last,  strict inequality is (\ref{eq:notopt}).

Otherwise, let $Y\in \Fscr$, $H_{Y}= H_F$ such that the next edge-set in the trajectory $F\rightarrow Y$ is  $Y\Delta e$,  where $Y\Delta e$ is the first set of the trajectory with $H_{Y\Delta e}\ne H_F$, $(e=ab\in E)$. 

Suppose first that $e\in Y$. Since  $H_{F}= H_{Y}$,  we have  $H_{F, \underline b}=
H_{Y, \underline b}$. Applying Fact~\ref{fact:elementary}~(iii) to $Y$, $e$ and the sponge $H_{Y, \underline b}$, and then Fact~\ref{fact:elementary}~(ii) successively along the  elementary changes through each $\Phi\in\Fscr$ in the trajectory $F\rightarrow Y$, we obtain
\[\mu(Y\Delta e,H_{F, \underline b})=\mu(Y\Delta e,H_{Y, \underline b})< \mu(Y,H)\le\ldots\le \mu(\Phi,H) \le \ldots \le \mu(F,H),\]
i.e. the distance of $Y\Delta e$  from the escape $H_{F, \underline b}$ is strictly smaller than $\mu(F, H)$. If   $e\notin Y$, in an entirely symmetric way, the distance of $Y\Delta e$  from the escape $H_{F, \overline b}$ is smaller than $\mu(F, H)$. 
\endproof

\begin{corollary}\label{cor:complexity}
An optimal general factor is  found by solving $(2n+1)n$ parity factor problems.   
\end{corollary}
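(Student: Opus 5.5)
The plan is an iterative improvement algorithm driven by Theorem~\ref{thm:sponge_fact}, charging each round to the deficiency. Start from an arbitrary $F\subseteq E$, say $F=\emptyset$. Its deficiency satisfies $\mu(F,H)\le\sum_{v\in V}\max(l(v),d(v)-u(v))$. In a round we compute, from $z:=d_F$, the environment $H_F$ and the at most $2n$ escapes $H_{F,\underline v},H_{F,\overline v}$. By Fact~\ref{fact:environment}(i) each of these is a parity sponge, or a singleton at one vertex, which is still a parity sponge, with $l=u$ there. For each of these at most $2n+1$ sponges we solve one parity factor problem and obtain an optimal $Y$ together with its deficiency.

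If one of these optima has deficiency smaller than $\mu(F,H)$, we replace $F$ by that $Y$. By the ``if'' direction of Theorem~\ref{thm:sponge_fact} (the escapes and the environment are contained in $H$), this gives $\mu(Y,H)\le\mu(Y,H')<\mu(F,H)$, so the deficiency with respect to $H$ strictly drops. If none is smaller, the ``only if'' direction of Theorem~\ref{thm:sponge_fact} certifies that $F$ is $H$-optimal, and we stop.

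The remaining point is the count, $(2n+1)n$ rounds rather than a bound depending on the initial deficiency. The crude bound $\mu(\emptyset,H)$ can be as large as $|E|$, so the first thing I would try is to start from a better initial $F$. One round of the same kind with respect to the interval sponge $[l,u]$, or a parity relaxation of it, already yields $F$ with small distance from $H$. However, distance to the interval hull is not the same as distance to $H$, since a gap of length one costs $1$ per vertex. An $F$ that is optimal for the interval hull has deficiency at most $\mu(J,[l,u])+n$ with respect to $H$, because each vertex inside $[l,u]$ is at distance at most $1$ from $H(v)$. Equivalently, the optimum $\mu(J,H)$ is at least $\mu(J,[l,u])$. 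Hence after this initial step at most $n$ strict improvements remain, each costing $2n+1$ parity factor calls. This gives $(2n+1)n$ calls overall, plus the initial classical call, which can itself be realised as a parity problem, or absorbed into the count as the paper does.

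The main obstacle is exactly this bound of $n$ on the number of improving rounds. It requires showing that after the initial interval-optimal step the excess deficiency $\mu(F,H)-\mu(J,H)$ is at most $n$. I would prove this by comparing an $H$-optimal $F^*$, which is also feasible for the interval relaxation, with the interval-optimal $F$. The idea is that $\mu(d_F,H)\le\mu(d_F,[l,u])+|\{v: d_F(v)\in[l(v),u(v)]\setminus H(v)\}|$, and that $\mu(d_F,[l,u])\le\mu(d_{F^*},[l,u])\le\mu(d_{F^*},H)$. The rest is routine bookkeeping.
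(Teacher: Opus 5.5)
Your argument is correct and has the paper's skeleton. A round consists of the $2n+1$ parity problems for $H_F$ and its escapes. Theorem~\ref{thm:sponge_fact} either certifies optimality or yields $Y$ with $\mu(Y,H)<\mu(F,H)$. The number of rounds is bounded by the excess of a good starting solution.

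Where you differ is the start. The paper takes $F$ optimal for the parity problem $(l,u,V)$ and asserts, ``since $H$ is a sponge'', that its excess is at most $n$. You instead take $F$ optimal for the interval hull $[l,u]$. Your chain $\mu(F,H)\le\mu(F,[l,u])+n\le\mu(F^*,[l,u])+n\le\mu(F^*,H)+n$ is precisely what makes that sentence true.

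With a parity sponge $P$ the last link fails, because in general $H(v)\not\subseteq P(v)$. Take vertices $a_1,\dots,a_k,w$, edges $a_iw$, two loops at each $a_i$, $H(a_i)=\{0,1,3,4\}$ and $H(w)=\{k\}$. Then $P(a_i)=\{0,2,4\}$, $P(w)=\{k\}$, and every $F\subseteq E$ has $P$-deficiency exactly $k$. Yet choosing one loop at each $a_i$ gives $\mu(F,H)=2k$, while $\{a_1w,\dots,a_kw\}$ is an $H$-factor. That is an excess of $2n-2>n$ for $k\ge 2$. A parity start only guarantees excess $2n$, and $(l,u,V)$ is not even defined when $l(v)\not\equiv u(v)$.

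So your route buys a valid bound of $n$ rounds, at the price of one classical (interval) call outside the parity count. Drop the claim that this call ``can itself be realised as a parity problem'', which you do not justify. Also drop the alternative ``or a parity relaxation of it'', which loses the bound $n$. If you want parity calls only, first run the same scheme for the sponge $[l,u]$ from a parity-optimal start; the excess there is at most $n$ with respect to $[l,u]$, so this only doubles the number of rounds.

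Lastly, to get $(2n+1)n$ rather than $(2n+1)(n+1)$, note that after $n$ strict improvements the excess bound itself certifies optimality. No extra certifying round is then needed.
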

\prove For given $F$, optimizing for $H_F$ and the $2n$ escapes takes $2n+1$ parity optimisation problems. If none of these improves $\mu(F,H)$,  then by Theorem~\ref{thm:sponge_fact} $F$ is optimal. Otherwise, again by the theorem,  $Y\subseteq E$, $\mu(Y,H)<\mu(F,H)$ is found.

Starting with $F$ minimizing the deficiency of the parity factor problem $(l,u,V)$, $\mu(F,H)$ is at distance at most $n$ from the optimum for $H$, since $H$ is a sponge.\endproof

The subtle point noted after Theorem~\ref{thm:sponge_fact} deserves attention when bounding the complexity. It is stated in  \cite[Section~3]{corn2} that “any improved solution relative to” [the environment and its escapes as we defined it] “is also an improved solution relative to $H$”.  As the example above the proof shows, this is unfortunately not true in general, our task is more complex: 

{\em We have to improve the distance from the environment or escapes below $\mu(F,H)$.} 

Using the  rough bound of $O(n^4)$ mentioned above for finding a first $F\subseteq E$ and that  $\mu(F,H)$ is then at distance at most $n$ of the optimum  (cf.~Proof of Corollary~\ref{cor:complexity}), we see that  finding improving paths  in $O(n^2)$ time (cf.~Lemma~\ref{lem:improving} and thereafter)   for parity factors, and  at most $(2n+1)n + n$ times, results in an overall complexity of $O(n^4)$.   

In particular, the following curiosity follows: 

\begin{corollary}\label{cor:classicalvsparity}
A classical deficiency problem for degree vectors can be solved by solving $(2n+1)n$ parity factor problems, in $O(n^4)$ time.\endproof
\end{corollary}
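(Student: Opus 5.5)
The plan is to obtain Corollary~\ref{cor:classicalvsparity} as a direct specialisation of Corollary~\ref{cor:complexity} together with the running-time accounting given just before its statement. A classical sponge $H=H_{(l,u,\Pi)}$ is in particular a sponge: interval vertices have no gaps at all, and parity vertices have only gaps of length one. So the classical deficiency problem for degree vectors is a special case of the general factor problem. Corollary~\ref{cor:complexity} therefore applies verbatim and gives an optimal $F$ after at most $(2n+1)n$ parity factor problems. It is worth checking that this count is justified here. The environment $H_F$ and every escape $H_{F,\underline v}$, $H_{F,\overline v}$ are parity sponges by Fact~\ref{fact:environment}(i), with singletons at the escaped vertex. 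Hence each subproblem genuinely is a parity factor problem, even when $H$ itself has interval vertices.

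For the iteration count I will follow the proof of Corollary~\ref{cor:complexity}.
\begin{itemize}
\item[(1)] Start from $F$ optimal for the parity sponge $(l,u,V)$, which is one parity problem.
\item[(2)] Bound the initial gap: its deficiency with respect to $H$ exceeds the optimum by at most $n$. The reason is that any optimal $F^*$ for $H$ has $d_{F^*}$ within distance $1$ per vertex of an element of the parity sponge on $[l,u]$, because $H$ has no gap of length two.
\item[(3)] Bound the number of rounds: each round solves $2n+1$ parity problems, and by Theorem~\ref{thm:sponge_fact} either certifies optimality or produces $Y$ with $\mu(Y,H)<\mu(F,H)$. So at most $n$ improving rounds occur.
\end{itemize}
This yields $(2n+1)n$ parity problems, up to the additive lower-order terms already absorbed in the statement.

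For the $O(n^4)$ time bound I will reuse the accounting in the paragraph preceding the corollary. The first $F$ costs $O(n^4)$ by the bidirected reachability/structure algorithm. After that, a parity problem need not be solved from scratch, since the previous optimum is a warm start. Each parity problem is then resolved by improving paths found in $O(n^2)$ time via Lemma~\ref{lem:improving}, and over all $(2n+1)n+n$ calls this gives $O(n^4)$ in total.

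The main obstacle is justifying this warm-start claim. The point to establish is that the total number of improving-path searches across all subproblems is $O(n^2)$ rather than $O(n^2)$ per subproblem times the deficiency. I would argue it as follows. Within a round, we only need to know whether some subproblem's deficiency can be pushed below $\mu(F,H)$. Starting from $F$, whose distance from each escape exceeds $\mu(F,H)$ by at most a bounded amount (the escape changes only one coordinate, by a bounded shift relative to $d_F$), only $O(1)$ improving-path searches per subproblem are needed. Hence a round costs $O(n)\cdot O(n^2)$, and $n$ rounds cost $O(n^4)$.
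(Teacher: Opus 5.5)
Your route is the paper's. Classical sponges are sponges, so Corollary~\ref{cor:complexity} gives the count, and the time bound is the accounting in the paragraph before the statement, which charges roughly one $O(n^2)$ improving-path search to each parity problem. Your conclusions are right, but the two estimates you rely on are justified by reasons that fail for general sponges. What actually makes them true is that $H$ is classical, and you never use this. In step (2), the no-gap-of-length-two argument only gives $\mu(F,H_{(l,u,V)})\le\mu(F^*,H_{(l,u,V)})\le\mu(F^*,H)+n$. To pass to $\mu(F,H)$ you also need $\mu(F,H)\le\mu(F,H_{(l,u,V)})$, that is, $H_{(l,u,V)}\subseteq H$ (using $u(v)-1$ instead of $u(v)$ at interval vertices with $l(v)\not\equiv u(v)$). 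This containment holds for classical $H$. It fails for a general sponge such as $H(v)=\{0,1,3,4\}$, where $2\in H_{(l,u,V)}(v)\setminus H(v)$.

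The warm-start step, which you correctly flag as the main obstacle, is where an argument is genuinely missing. Your parenthetical does not prove the bounded excess. For a general sponge the excess $\mu(F,H_{F,\underline v})-\mu(F,H)$ is unbounded: in the paper's example after Theorem~\ref{thm:sponge_fact}, $\mu(F,H)=1$ while $\mu(F,H_{F,\underline a})=10$, and lengthening the example makes the gap arbitrarily large. Even for classical $H$ the escape value need not be close to $d_F(v)$; if $d_F(v)<l(v)$ it is $l(v)+1$. What is bounded is the excess in deficiency, and here classicality enters:
\begin{itemize}
\item At a parity vertex, $H_F(v)=H(v)$ and there is no escape.
\item At an interval vertex, $H_F(v)$ is the single point of $H(v)$ nearest to $d_F(v)$, and each escape value lies at distance exactly $\mu(d_F(v),H(v))+1$ from $d_F(v)$.
\end{itemize}
Hence $\mu(F,H_F)=\mu(F,H)$ by Fact~\ref{fact:environment}(i), and $\mu(F,H_{F,\underline v})=\mu(F,H)+1$ for every existing escape (likewise for $H_{F,\overline v}$). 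Each improving path of Lemma~\ref{lem:improving} lowers the deficiency by at least one. So one search decides whether $H_F$ gives an improvement, and at most two searches warm-started at $F$ decide whether an escape can be pushed below $\mu(F,H)$. With this observation your $O(n)$ searches per round, at most $n+1$ rounds, and $O(n^4)$ total go through.
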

Dudycz and Paluch \cite{DudyczPaluch} consider the problem of optimizing  the weight of a general factor (minimisation is equivalent to maximisation).  This is a difficult problem for which they ultimately obtain a relatively  simple direct algorithm, albeit with a technically involved proof. Kobayashi \cite{kobayashi} generalized the cardinality case   to ``strongly base orderable'' jump systems and provided a simpler proof. Could these results be also proved by reducing them to weighted classical factors?

\subsection{Jump systems and sponges}\label{subsec:jump}

A workshop held in Grenoble in 1995 on ``Delta-matroids, jump systems and bisubmodular polyhedra'' concluded with a lecture of Lovász presenting   results  developed during the meeting.  These  extended his theorems on classical graph factors  to  jump systems \cite{Ljump}. 

As in the case of degree vectors \cite{factors, LPL},  however,  \cite{Ljump} leaves open algorithmic questions about  the deficiency problem of jump systems with respect to $H$,  when $H$ is a general sponge. This issue is  explicitly raised in the closing paragraph of  \cite{LPled}.

The intersection of jump systems and sponges had in fact already been addressed in  \cite{Sjump}, where a connection with a problem of Recski also became apparent. 
 The   tools offered by  \cite{Ljump} were instrumental:
methods known for classical factors were translated into the language of jump systems.  
For jump systems, elementary changes are replaced by {\em pushes} defined as follows, allowing one to proceed  in the sequel by full analogy with Section~\ref{subsec:general}.

Let $J\subseteq \mathbb{Z}^n$ be a jump system, and $H\subseteq \mathbb{Z}^n$ a sponge. 
A {\em push at} $i\in [n]$ of $x\in J$ towards $H$  consists of a step   $x'$ at $i$ towards $H$, followed by an arbitrary step $x''\in J$ from $x'$; as usual, we set $x'':=x'$ if $x'\in J$. In contrast with the $2$-step-axiom, when  $x'\notin J$ we do not require $\mu(x'',H)<\mu(x',H)$;  only the first step must decrease the distance to $H$. We denote this push by $(x,x',x'')$. If  $x, y\in J$, a push from $x$ towards $y$ exists by the $2$-step-axiom. 
 
Fact~\ref{fact:onedim} and Fact~\ref{fact:environment} were directly stated in terms of vectors so that they could serve factors   in both graphs and jump systems. However, Fact~\ref{fact:elementary} was stated in terms of elementary changes in graphs, and we restate it for pushes because of minor differences. In jump systems both steps of a   push  $(x, x', x'')$ may occur in the same direction $i\in [n]$, (i.e.~$x'-x=x''-x'=e_i$), corresponding  exactly to loops in graphs, but here this is no longer an ``exception''; additonally,  $x'=x''\in J$ can occur, unlike in graphs. Despite these differences the assertions and arguments are mostly word for word the same, contain the graph special case, and are sometimes even simpler for vectors: 

\begin{fact}\label{fact:jelementary}
	Let $H$ be a sponge,  and $(x,x',x'')$ a push at $i$, with second step at $j$, $i,j\in [n]$. Then
	
	{\rm (i)} $\mu(x'(i), H(i)) = \mu(x(i), H(i)) - 1,$
	
	{\rm (ii)}  $\mu(x'', H)\le  \mu(x, H)$,
	
	{\rm (iii)}  if $H_{x''}\ne H_x$, then   $x(j), x''(j)\in H(j)$,  $\mu(x'',H)<\mu(x, H)$,  furthermore
	
	\,\,\,\quad -- either $x''(j)=x(j)-1$,   $x''(j)=l_x(j)-1 \in H(j)$,\, and   $\mu(x'', H_{x,\underline j})<\mu(x,H)$,
	
	\,\,\,\quad -- or \,\,\quad $x'(j)=x(j)+1$,   $x''(j)=u_x(j)+1 \in H(j)$,\, and  $\mu(x'', H_{x,\overline j})<\mu(x,H)$.
\end{fact}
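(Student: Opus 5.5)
The plan is to follow the proof of Fact~\ref{fact:elementary} almost word for word. All quantities decompose coordinatewise: $\mu(\cdot,H)=\sum_k\mu(\cdot(k),H(k))$ because $H$ is a Cartesian product, and $H_x$ is defined entrywise. So everything reduces to one-dimensional statements about the sponges $H(i)$ and $H(j)$.

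For (i), a step at $i$ changes only coordinate $i$. By the definition of a step towards $H$, the total distance drops, and as noted after that definition it drops by exactly $1$. The drop must therefore occur in coordinate $i$. For (ii), the second step changes one coordinate by $\pm1$, so it raises the distance by at most $1$. Combined with (i), this gives $\mu(x'',H)\le\mu(x,H)$.

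For (iii), I first prove a one-dimensional lemma. Let $K$ be a sponge and $z,z\pm1$ integers. Then $K_{z\pm1}\ne K_z$ forces $z,z\pm1\in K$. In that case $z=l_z$ (for $-1$) or $z=u_z$ (for $+1$), and the new value is the corresponding escape value.
\begin{itemize}
\item If $z\notin K$, the sets $K\cap[h,z]$ and $K\cap[h,z\pm1]$ differ by at most the single point $z\pm1$. Since $K$ is a sponge, both neighbours of $z$ that lie in $K$ have the same parity. Hence the parity condition defining $l_z,u_z$ is unaffected and the environment is unchanged.
\item If $z\in K$ and $z\pm1\notin K$, the symmetric argument applies.
\item Only when two consecutive integers both lie in $K$ can the parity block change, and then $z$ must be an endpoint of its block.
\end{itemize}

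I then apply this coordinatewise along $x\to x'\to x''$. Coordinate $i$ in the first step starts at $x(i)\notin H(i)$, because $\mu(x(i),H(i))>0$ by (i). So the environment does not change in the first step, that is, $H_{x'}=H_x$. Any change must therefore occur in the second step, at coordinate $j$, giving $x'(j),x''(j)\in H(j)$ consecutive. If $j\ne i$, then $x'(j)=x(j)$, and we get $x(j),x''(j)\in H(j)$. We also get $x''(j)=l_x(j)-1$ when $x''(j)=x(j)-1$, and symmetrically $x''(j)=u_x(j)+1$ when $x''(j)=x(j)+1$. I read the second alternative's ``$x'(j)=x(j)+1$'' as $x''(j)=x(j)+1$.

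With this in hand, the strict inequality follows as in Fact~\ref{fact:elementary}. Coordinate $i$ strictly decreases by (i). Coordinate $j$ does not increase, since $x''(j)\in H(j)$. All other coordinates are unchanged. This gives $\mu(x'',H)<\mu(x,H)$.

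For the escape inequality, note that $H_{x''}$ and $H_{x,\underline j}$ agree in every coordinate except possibly $j$. At $j$ the value $x''(j)=l_x(j)-1$ lies in the singleton $\{l_x(j)-1\}$. Hence $\mu(x'',H_{x,\underline j})=\mu(x'',H_{x''})$, which equals $\mu(x'',H)$ by Fact~\ref{fact:environment}(i), and this is $<\mu(x,H)$. The upper escape is symmetric.

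The main obstacle is the case $j=i$, with both steps in the same direction. This is the analogue of the loop exception, and the paper says it is no longer exceptional. Here $x(i)\notin H(i)$ while $x'(i),x''(i)\in H(i)$ are consecutive. By the lemma applied to $x'(i)\to x''(i)$, and since $H_{x'}=H_x$, the value $x''(i)$ is again the escape value $l_x(i)-1$ or $u_x(i)+1$. Distance at $i$ drops from $1$ to $0$, so $\mu(x'',H)<\mu(x,H)$, and the escape inequality goes through exactly as above. The only point needing care is the claim ``$x(j)\in H(j)$''. It holds literally when $j\ne i$. When $j=i$, I would verify it with $x'(j)$ in place of $x(j)$, or note that it is not used later. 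I would check this bookkeeping carefully against how (iii) is invoked in the jump-system analogue of Theorem~\ref{thm:sponge_fact}.
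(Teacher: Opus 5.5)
Your proof is correct. It uses the same coordinatewise idea as the paper: the sponge property ensures that passing through a value outside $H(v)$ cannot change the parity block. The organisation differs in a way that matters. The paper compares $x$ with $x''$ directly, and its inference that $x(j)\notin H(j)$ would force $H_{x''}=H_x$ is valid only for $j\neq i$. You instead first establish $H_{x'}=H_x$ and then apply your one-dimensional lemma to the single step $x'\to x''$. That is exactly what makes the case $j=i$ work.

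Your caveat about $j=i$ is real, not mere bookkeeping: in that case the claim $x(j)\in H(j)$ is false. Take $n=1$, the interval sponge $H=\{1,2\}$, the jump system $J=\{0,2\}$, and the push $(0,1,2)$. Then $H_x=\{1\}\neq\{2\}=H_{x''}$, but $x(1)=0\notin H(1)$. Also $x''(1)=x(1)+2$, so under your reading neither hypothesis $x''(j)=x(j)\mp1$ applies.

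What is true, and what you actually prove, is the version with $x'(j)$ in place of $x(j)$. Namely $x'(j),x''(j)\in H(j)$, and either $x''(j)=x'(j)-1=l_x(j)-1$ or $x''(j)=x'(j)+1=u_x(j)+1$, together with the corresponding escape inequality. The proof of Theorem~\ref{thm:sponge_jump} uses only the escape inequality, so this corrected form suffices there. One condition: the escape must be selected by the sign of $y''(j)-y'(j)$. The text there selects it by the sign of the first step $y'(i)-y(i)$, which is wrong when $j\neq i$ and the two steps go in opposite directions.
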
 

\prove (i) can be readily checked from the definition of a push, and (ii) is a consequence of (i), because $|x'(j)-x''(j)|\le 1$. 
  
For (iii), suppose for a contradiction that
$x''(j)\notin H(j)$. Then, since $H$ is a sponge,   \eqref{eq:jbounds_unchanged}  and consequently (\ref{eq:junchanged}) hold for $v=j$,  and  it can be seen from  (i) that for $v=i$ as well: 
\vspace{-0.2cm} 
\begin{equation}
	l_{x''}(v)=l_x(v),\, u_{x''}(v)=u_x(v),  \vspace{-0.2cm}
	\label{eq:jbounds_unchanged}
\end{equation}
\vspace{-0.4cm}
\begin{equation}
	H_{x''}(v)=H_x(v).  \vspace{-0.1cm}
	\label{eq:junchanged}
\end{equation}
For $v\in V\setminus\{i,j\}$ \eqref{eq:jbounds_unchanged},  (\ref{eq:junchanged}) 
are   obvious. Thus $H_{x''}=H_x$,
contradicting the assumption, and proving 
 $x''(j)\in H(j)$, moreover:
  \vspace{-0.3cm}
 \begin{equation}
 	x''(j)\in H(j),\quad x(j)\in H(j),\quad l_x(j)=x(j), 
 	\vspace{-0.2cm}
 	\label{eq:jconsec}
 \end{equation}
because $x(j)\notin H(j)$ would imply  \eqref{eq:jbounds_unchanged} and then  \eqref{eq:junchanged} would again hold for all $v\in V$.  
 
  Let us check next that under the condition of (iii) we have: 
 \vspace{-0.2cm}
 \begin{equation}
\mu(x'', H)<\mu(x,H).   \vspace{-0.2cm}
 	\label{eq:jimproved}
 \end{equation} 

We have 
 $\mu(x''(i), H(i)) < \mu(x(i), H(i))$ by (i), while
$\mu(x''(j), H(j)) \le \mu(x(j), H(j))$ by the already proven $x''(j)\in H(j)$. We obtain   \eqref{eq:jimproved} adding up the strict inequality for $i$, the inequality for $j$,  and $\mu(x'', H(v))=\mu(x,H(v))$ for $v\in V\setminus\{i,j\}$.   

Finally, distinguish cases. 

If $x''(j)=x(j)-1$, then $x''(j)=x(j)-1$, so $x(j), x(j)-1\in H(j)$, $l_x(j)=x(j)$, and
\vspace{-0.2cm}
\begin{equation}
	\mu(x'', H)=\mu(x'', H_{x''})=\mu(x'',H_{x,\underline j}), \vspace{-0.2cm}
	\label{eq:jeq}
\end{equation}
where the first equality is Fact~\ref{fact:environment}(i); we see the second equality by observing $H_{x''}(v)=H_{x,\underline j}(v)$ for all $v\in V$, using \eqref{eq:jconsec}. 

The case   $x''(j)=x(j)+1$ follows by symmetry, finishing the proof of Fact~\ref{fact:jelementary}.

\medskip For $x,y\in J$ we write  $x\rightarrow_H y$ if $y$ is obtained from $x$ by a succession of pushes. Again,  $H$ can  be deleted in the index if there is no ambiguity; ``$\rightarrow$'' is transitive. 

The following lemma and theorem extend  Lemma~\ref{lem:LL} and Theorem~\ref{thm:sponge_fact} to jump systems, with  essentially identical proofs, yet are  worth repeating for subtle novelties in the generalisation.

\begin{lemma}\label{lem:push}
	Let $J\subseteq\mathbb{Z}^V$ be a jump system, $x\in J$ and let $H\subseteq\mathbb{Z}^V$
	be a sponge.
	Then there exists an $H$-optimal   $x_H\in J$ such that
	$x\rightarrow x_H$.
\end{lemma}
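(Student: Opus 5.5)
We mimic the proof of Lemma~\ref{lem:LL}, replacing edge sets by points of $J$ and elementary changes by pushes, and using the $2$-step-axiom in place of the exchange of a single edge. Fix $x\in J$, and among all pairs $(x^*,x_H)$ with $x^*\in J$ $H$-optimal, $x_H\in J$ and $x\rightarrow x_H$ (the trivial sequence of pushes being allowed, so $x_H=x$ qualifies), choose one minimizing $\mu(x^*,x_H)$. We claim $x_H$ is itself optimal. Suppose not. Then $\mu(x_H,H)>\mu(x^*,H)$, so some coordinate $a\in V$ satisfies
\[\mu\bigl(x^*(a),H(a)\bigr)<\mu\bigl(x_H(a),H(a)\bigr).\]

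The main step is the analogue of the Claim of Lemma~\ref{lem:LL}: there is a step $x'$ from $x_H$ at $a$ which goes towards $x^*$ and simultaneously towards $H$, i.e.\ $\mu(x'(a),H(a))=\mu(x_H(a),H(a))-1$. We use the same three cases.
\begin{itemize}
\item If $x_H(a)\ge u(a)$, then $x^*(a)<x_H(a)$, and we take $x'=x_H-e_a$.
\item If $x_H(a)\le l(a)$, then symmetrically $x^*(a)>x_H(a)$, and we take $x'=x_H+e_a$.
\item If $l(a)<x_H(a)<u(a)$, then $x_H(a)\notin H(a)$, so $x_H(a)\pm1\in H(a)$ since $H$ is a sponge. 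As $x^*(a)\ne x_H(a)$, we take the unit step at $a$ in the direction of $x^*(a)$; it lands in $H(a)$.
\end{itemize}
In every case $x'$ is a first step from $x_H$ to $x^*$ that also decreases the distance to $H$.

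Now apply the $2$-step-axiom to $x_H,x^*\in J$ and this first step $x'$. It yields $x''\in J$, either $x''=x'$ or a step from $x'$ towards $x^*$. Then $(x_H,x',x'')$ is a push at $a$ towards $H$, because only the first step is required to decrease the distance to $H$. Hence $x\rightarrow x_H\rightarrow x''$, and by transitivity $x\rightarrow x''$. Moreover $\mu(x^*,x'')\le\mu(x^*,x_H)-1$, which contradicts the minimal choice of the pair. Therefore $x_H$ is optimal.

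The only delicate point, and the reason a push is defined with an unconstrained second step, is that the step supplied by the $2$-step-axiom goes towards $x^*$ rather than towards $H$. This is exactly why the push definition does not require $\mu(x'',H)<\mu(x',H)$. The measure that decreases is $\mu(x^*,\cdot)$, not the deficiency, so the argument goes through. Case~3 is the only place where the sponge property is used, exactly as in Lemma~\ref{lem:LL}.
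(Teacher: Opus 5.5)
Your proof is correct and follows essentially the same route as the paper. It uses the same pair $(x^*,x_H)$ minimizing $\mu(x^*,x_H)$, the same three-case choice of a first step at a coordinate $a$ towards both $x^*$ and $H$ (with the sponge property used only in Case~3), and the $2$-step axiom supplying a push that contradicts minimality. Your formulation, which assumes $x_H$ itself is not optimal instead of aiming at $x^*=x_H$ as the paper does, is slightly cleaner, since $x_H\neq x^*$ alone does not force $x_H$ to be non-optimal.
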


\prove
Let $x\in J$ be arbitrary, $x^*\in J$ be $H$-optimal,
and let $x_H\in J$ satisfy $x\rightarrow x_H$.
Choose $x^*$ and $x_H$ so that $\mu(x^*,x_H)$ is minimum.
We claim that $x^*=x_H$.

Suppose otherwise. Then $x_H$ is not optimal, so there exists
$i\in V$ such that
\begin{equation}\label{eq:jnopt}
	\mu\bigl(x^*(i),H(i)\bigr)
	<
	\mu\bigl(x_H(i),H(i)\bigr).
\end{equation}

\medskip
\noindent\textbf{Claim.}
There exists a push $(x_H,x_H',x_H'')$ of $x_H$ towards $H$ 
such that
$\mu(x^*,x_H'')<\mu(x^*,x_H)$.

\smallskip
\noindent\emph{Case 1.} $x_H(i)\ge u(i)$.
Then~\eqref{eq:jnopt} implies $x_H(i)>x^*(i)$.
A first step $x'_H$ from $x_H$ at coordinate $i$ towards $x^*$
reduces the distance from $u(i)$, and hence from $H(i)$.
The second step $x_H''$ provided by the $2$-step axiom
completes the desired push.

\smallskip
\noindent\emph{Case 2.} $x_H(i)\le l(i)$.
Then similarly, $x_H(i)<x^*(i)$.
A first step at coordinate $i$ towards $x^*$ reduces the distance
from $l(i)$, and the argument proceeds exactly as in Case~1.

\smallskip
\noindent\emph{Case 3.} $l(i)< x_H(i)< u(i)$.
Then~\eqref{eq:jnopt} implies $x_H(i)\notin H(i)$.
Since $H$ is a sponge,
\[
x_H(i)-1,\; x_H(i)+1\in H(i).
\]
Any push at coordinate $i$ from $x_H$ towards $x^*$ guaranteed
by the $2$-step axiom yields the desired push
$(x_H,x_H',x_H'')$. 

\smallskip
In all cases, the chosen push satisfies the claim. 

By the claim, $x\rightarrow x_H''$ and
$\mu(x^*,x_H'')<\mu(x^*,x_H)$,
contradicting the minimality of $\mu(x^*,x_H)$.
Hence $x^*=x_H$, completing the proof.
\endproof

\begin{theorem}\label{thm:sponge_jump}
Let $J\subseteq\mathbb{Z}^n$ be a jump system and $H\subseteq\mathbb{Z}^n$ a sponge. Then $x\in J$ is not optimal if and only if there exists $x\in J$   with smaller distance  than $\mu(x,H)$ from $H_x$ or from at least one of the $2n$ escapes from $H_x$. An optimal $x\in J$ is   found by solving $(2n+1)n$ parity sponge problems.   
\end{theorem}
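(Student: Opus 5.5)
The proof will mimic that of Theorem~\ref{thm:sponge_fact} word for word, with elementary changes replaced by pushes, Lemma~\ref{lem:LL} by Lemma~\ref{lem:push}, and Fact~\ref{fact:elementary} by Fact~\ref{fact:jelementary}. Throughout, the witness in the statement is a point $y\in J$, to distinguish it from the given $x$.

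\emph{If part.} Since $H_x$, $H_{x,\underline v}$, $H_{x,\overline v}\subseteq H$ for every $v$, we have for every $y\in J$
\[\mu(y,H)\le\min\bigl\{\mu(y,H_x),\ \mu(y,H_{x,\underline v}),\ \mu(y,H_{x,\overline v})\ (v\in V)\bigr\}.\]
So if $y$ is closer than $\mu(x,H)$ to $H_x$ or to one of the escapes, then $\mu(y,H)<\mu(x,H)$, and $x$ is not optimal.

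\emph{Only if part.} Assume $x$ is not optimal. By Lemma~\ref{lem:push} there is an $H$-optimal $x_H\in J$ with $x\rightarrow x_H$, so $\mu(x_H,H)<\mu(x,H)$. Let $x=y_0,y_1,\dots,y_k=x_H$ be the points of $J$ visited by the successive pushes, each $y_{t+1}$ being the $x''$ of a push from $y_t$.

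If $H_{y_t}=H_x$ for all $t$, then by Fact~\ref{fact:environment}(i)
\[\mu(x_H,H_x)=\mu(x_H,H_{x_H})=\mu(x_H,H)<\mu(x,H),\]
and $y:=x_H$ is the required witness.

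Otherwise, let $t$ be the first index with $H_{y_{t+1}}\neq H_x$. Then $H_{y_t}=H_x$, and hence the escapes of $y_t$ and of $x$ coincide. Apply Fact~\ref{fact:jelementary}(iii) to the push from $y_t$ to $y_{t+1}$, with second step at $j$. It gives either $\mu(y_{t+1},H_{x,\underline j})<\mu(y_t,H)$ or $\mu(y_{t+1},H_{x,\overline j})<\mu(y_t,H)$. Fact~\ref{fact:jelementary}(ii), applied along the earlier pushes, gives
\[\mu(y_t,H)\le\dots\le\mu(y_0,H)=\mu(x,H).\]
So $y:=y_{t+1}$ is closer than $\mu(x,H)$ to one of the $2n$ escapes.

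The main point to check is the use of Fact~\ref{fact:jelementary}(iii). It needs the strict decrease at coordinate $i$ from Fact~\ref{fact:jelementary}(i). It also needs the second step at $j$ to land exactly on the escape value, since $x''(j)\in H(j)$ and $l_{y_t}(j)=y_t(j)$. Finally, it needs the possible coincidence $i=j$ (the analogue of loops) and the case $x'=x''$ to cause no trouble. In both of these degenerate cases \eqref{eq:jbounds_unchanged} holds at every coordinate, so $H_{x''}=H_x$, and such pushes fall under the first case above.

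\emph{Algorithm.} Start from an $x$ that is optimal for the parity sponge $H_{(l,u,V)}$, obtained with one oracle call. Because $H$ is a sponge, every $h\in H_{(l,u,V)}$ that lies in no $H(v)$ is adjacent in that coordinate to a point of $H$, so $\mu(z,H)\le \mu(z,H_{(l,u,V)})+n$ for all $z$. Hence $\mu(x,H)$ exceeds the optimum by at most $n$.

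In each round, call the parity oracle on $H_x$ and on the at most $2n$ escapes, which are parity sponges by Fact~\ref{fact:environment}(i); this is $2n+1$ calls. If some returned $y$ has distance below $\mu(x,H)$ from the sponge it was optimized for, then $\mu(y,H)<\mu(x,H)$, and we replace $x$ by $y$. Otherwise $x$ is optimal by the equivalence above. At most $n$ rounds occur, giving $(2n+1)n$ parity sponge problems in total.
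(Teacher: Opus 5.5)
Your route is the paper's: the same easy direction, the same trajectory from Lemma~\ref{lem:push}, the first push that leaves the environment, Fact~\ref{fact:jelementary}(ii),(iii), and the same count. The step that fails is the claim in your verification paragraph that a push with both steps at the same coordinate ($i=j$) leaves $H_x$ unchanged. Take $n=1$, $H=\{2,3,5,7\}$, $J=\{1,3\}$ and $x=1$. The only pushes from $x$ are $(1,2,1)$ and $(1,2,3)$, and $H_1=\{2\}$ while $H_3=\{3,5,7\}$. So here the first environment-changing push necessarily has $i=j$, and it is exactly the push that produces the witness: $y=3$ is at distance $0$ from the upper escape $\{3\}$, while $H_x$ yields nothing.

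Fact~\ref{fact:jelementary}(iii) does not literally cover this case either, since its assertion $x(j)\in H(j)$ fails here ($x(1)=1\notin H$). So the case needs its own argument rather than being excluded, and that argument is easy. If both steps are at $i$ and $x''\neq x$, then $x(i)\notin H(i)$ and $x''(i)=x(i)+2\epsilon$, where $\epsilon=\pm1$ is the direction of the first step. Check the three possible positions of $x(i)$: below $l(i)$, above $u(i)$, or in a gap. In each, $H_{x''}(i)\neq H_x(i)$ only if $x'(i)$ is the end of the block $H_x(i)$ in the direction of motion ($u_x(i)$ for $\epsilon=1$, $l_x(i)$ for $\epsilon=-1$). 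Moreover $x''(i)$ must then be the escape value $u_x(i)+1$, resp.\ $l_x(i)-1$, lying in $H(i)$. In that situation the distance at $i$ drops from $1$ to $0$ and no other coordinate moves. Hence $\mu(x'',H_{x,\overline{i}})=\mu(x'',H)<\mu(x,H)$ (resp.\ with $H_{x,\underline{i}}$), which is all the rest of your argument uses. Your claim is correct for the other degenerate case $x'=x''$: a single step at a coordinate with $x(i)\notin H(i)$ never changes $H_x(i)$.

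In the algorithmic part, your inequality $\mu(z,H)\le\mu(z,H_{(l,u,V)})+n$ and the $H_{(l,u,V)}$-optimality of $x$ only give $\mu(x,H)\le\mu(x^*,H_{(l,u,V)})+n$ for an $H$-optimal $x^*$. To conclude, you would also need $\mu(x^*,H_{(l,u,V)})\le\mu(x^*,H)$. That fails in general: a coordinate with $x^*(v)\in H(v)$ of the parity opposite to $l(v)$ contributes $1$ to the left side and $0$ to the right. As written, you get an excess of at most $2n$, hence roughly twice the stated number of oracle calls (still polynomial). The paper states the bound $n$ without further detail, but your ``hence'' does not supply it.
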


\prove 
The if part is easy: since $H_x, H_{F, \underline i}, H_{F, \overline i}\subseteq H$, for all $i\in [n]$, we have for each $x'\in J$: 
	
	\smallskip
	\centerline{$\displaystyle\mu(x',H)\le \min\big\{\mu(x',H_x),  \cup_{i=1}^n\{\mu(x',H_{x, \underline i}),  \mu(x',H_{x, \overline i})\} \big\},$ }
	
	\smallskip\noindent
	and therefore, if the right hand side is smaller than  $\mu(x,H)$,  then so is the left hand side, showing that $x$ is not optimal. 
	
	To prove the only if part, suppose that $x$ is not optimal,   and apply   Lemma~\ref{lem:push}: let 
	\begin{equation}\label{eq:jnotopt}
		x_H\in J,\,  x\rightarrow x_H,\, \mu(x_H, H)<\mu(x,H).
	\end{equation} 
	Let $\Fscr\subseteq J$ be the set of those points of $J$ that occur in the series of pushes starting with $x$ and finishing with $x_H$. 
	If for all $f\in\Fscr$ we have  $H_{f}=H_x$, we are done, for then $H_{x_H}=H_x$, and 
	\[\mu(x_H, H_x) =\mu(x_H,H_{x_H})= \mu(x_H,H)<\mu(x,H),\]
	where the first equality repeats $H_{x_H}=H_x$,  the second is  Fact~\ref{fact:environment}(i), and the inequality is (\ref{eq:jnotopt}).

	Otherwise, let $y\in \Fscr$, $H_{y}=H_x$ such that   the next push in the trajectory $x\rightarrow x_H$ is $(y,y',y'')$, where $y''$ is the  first point  of the trajectory with   $H_{y''}\ne H_x$.  Let this be a push at $i$ with second step at $j$. (Possibly $j=i$.) Then  $y'=y\pm e_i$. 
	
	 Suppose first that $y'(i)=y(i)-1$. Since $H_x=H_y$ we have  $H_{x, \underline j}=
	H_{y, \underline j}$. Applying Fact~\ref{fact:jelementary}~(iii) to $y''$ and the sponge $H_{y, \underline j}$ and then Fact~\ref{fact:jelementary}~(ii) successively along the  elementary changes through each $f\in\Fscr$ in the order they occur in the trajectory $x\rightarrow y$,  we obtain
	\[\mu(y'',H_{x, \underline j})=\mu(y'',H_{y, \underline j})< \mu(y,H)\le\ldots\le \mu(f,H)\le\ldots\le\mu(x,H),\]
	i.e. the distance of $y''$  from the escape $H_{x, \underline j}$ is strictly smaller than $\mu(x, H)$. If  $y'(i)=y(i)+1$, in an entirely symmetric way,  the distance of $y''$  from the escape $H_{x, \overline j}$ is smaller than $\mu(x, H)$.
	 
	 The final statement of the theorem  follows exactly as in  the proof of  Corollary~\ref{cor:complexity}.
\endproof
Corollary~\ref{cor:classicalvsparity} extends in the same manner, without the time bound.  

The following related problem was recalled by  Recski \cite{Sjump}, raised by applications in  \cite{recski,recskitakacs}.

\smallskip\noindent 
{\bf Problem}: Given a number $q\in\mathbb{N}$, a finite set $S$ with a partition and a  sponge associated with each class of the partition, and a matroid on $S$, is there an independent set of the matroid of size $q$, so that the number of elements in each partition class is in the sponge of the partition?

\smallskip
  This problem fits the sponge deficiency framework of jump systems, but Theorem~\ref{thm:sponge_jump} offered no immediate solution, as the classical case remained open. 
  
  Szabó~ \cite{jacint} resolved the parity deficiency problem for these jump systems via matroid parity in polynomial time, and using this,   Theorem~\ref{thm:sponge_jump} does indeed imply a solution of  Recski's problem in polynomial time. Furthermore,   this result is then applied in  \cite{jacint} to rigidity theory.

\medskip\noindent
{\bf \large Conclusion}: Tutte's article \cite{Tutte} is  a cornerstone result  in graph theory and a fertile source of combinatorial ideas,  opening the door to algebraic methods and complexity theory. 
We  traced a  line of development leading to increasingly general degree-constraints, and ultimately to a solution of the general factor problem in terms of jump systems, where new methods, notions, algorithms naturally occur.  This may have shown that Tutte's theorem served, and will hopefully also serve in the future, simultaneously as a source of motivation, a guiding goal, and a lasting companion. 
 
\medskip\noindent
{\bf \large Acknowledgment}: 
\small The author feels  fortunate to have been introduced to matching theory by András Frank and László Lovász as early as the 1970s and 1980s, leading to half a century of experience in the field. Many thanks are due to Erika Bérczi-Kovács, Louis Esperet,  Kazuo Murota,  and the three referees for their  helpful comments. Warm thanks are due to  Caroline Series and Stuart White for their suggestions far beyond the usual editorial work. Besides keeping the contact, Stuart's tireless, repeated proofreading and stylistic advice substantially improved the text.

\end{document}